\theoremstyle{plain}
\newtheorem{thm}{Theorem}[section]
\newtheorem{lemma}[thm]{Lemma}
\newtheorem{prop}[thm]{Proposition}
\newtheorem{cor}[thm]{Corollary}
\theoremstyle{definition}
\newtheorem{defi}[thm]{Definition}
\newtheorem{notation}[thm]{Notation}
\newtheorem*{convention}{Convention}
\newtheorem*{question}{Question}
\newtheorem{rmk}[thm]{Remark}
\newtheorem{ex}[thm]{Example}
\newtheorem*{ex*}{Example}
\begin{document}

\newgeometry{height=23.8cm, width=15.6cm}
\title[On the Lipman-Zariski conjecture for logarithmic vector fields]{On the Lipman-Zariski conjecture for logarithmic vector fields on log canonical pairs}
\author{Hannah Bergner}

\address{Mathematisches Institut, Albert-Ludwigs-Universität Freiburg, Eckerstraße 1, 79104
Freiburg im Breisgau, Germany}
\email{Hannah.Bergner@math.uni-freiburg.de}

\thanks{Financial support by the DFG-Graduiertenkolleg  GK1821 "Cohomological Methods in Geometry" at the University of Freiburg
is gratefully acknowledged.}
\date{\today}

\begin{abstract}
We consider a version of the Lipman-Zariski conjecture for logarithmic vector fields and logarithmic $1$-forms on pairs.
Let $(X,D)$ be a pair consisting of a normal complex variety $X$ and an effective Weil divisor~$D$ such that the sheaf of logarithmic vector fields (or dually the sheaf of reflexive logarithmic $1$-forms) is locally free. 
We prove that in this case the following holds: 
If $(X,D)$ is dlt, then $X$ is necessarily smooth and $\lfloor D\rfloor $ is snc.
If $(X,D)$ is lc or the logarithmic $1$-forms are locally generated by closed forms, then $(X,\lfloor D\rfloor)$ is toroidal.
\end{abstract}

\maketitle

\tableofcontents

\section{Introduction}
The Lipman-Zariski conjecture posed in \cite[p.\ 874]{Lipman} states that every normal complex space with locally free tangent sheaf is smooth.
In this paper, we consider a version of this conjecture for the logarithmic tangent sheaf 
$\mathcal T_X(-\log D )$, or equivalently the dual sheaf~$\Omega_X^{[1]}(\log D)$ of reflexive logarithmic $1$-forms, on a pair $(X,D)$,
where $X$ is a normal complex quasi-projective variety and $D$ a reduced Weil divisor;
for precise definitions see Section~\ref{section: notation}.

\begin{ex}[Snc pair]
Let $X=\mathbb A^n$ with coordinates $z_1,\ldots, z_n$ and 
\mbox{$D=\{z_1\cdot\ldots\cdot z_k=0\}$.}
Then $z_1\frac{\partial}{\partial z_1},\ldots, z_k\frac{\partial}{\partial z_k}, \frac{\partial}{\partial z_{k+1}}, \ldots , \frac{\partial}{\partial z_n}$ form an 
$\mathcal O_X$-basis of the \mbox{logarithmic} tangent sheaf , and the dual sheaf of logarithmic $1$-forms is spanned by $\frac{dz_1}{z_1},\ldots, \frac{dz_k}{z_k}, dz_{k+1},\ldots, dz_n$.

More generally, if $X$ is smooth and $D$ is a reduced snc divisor,
then the logarithmic tangent sheaf $\mathcal T_X(-\log D)$ and its dual are locally free.
\end{ex}
 
\begin{ex}[Toric variety]\label{ex: toric var}
If $(X,D)$ is a pair consisting of a toric variety $X$ and a reduced divisor $D$ whose support is the complement of the open torus orbit,
then the sheaf $ \Omega_X^{[1]}(\log D)$ of reflexive logarithmic $1$-forms 
is free.
\end{ex}

This raises the question in which cases a converse of this is locally true: 
\begin{question}
Let $(X,D)$ be a pair such that the logarithmic tangent sheaf 
$\mathcal T_X(-\log D)$, or equivalently $\Omega_X^{[1]}(\log D)$, is locally free.
Is $(X,D)$ then necessarily toroidal, i.e.\ locally of the form as in Example~\ref{ex: toric var}?
\end{question}

\restoregeometry

In general, this is false. Consider for instance the following example:
\begin{ex}
Let $X=\mathbb A_\mathbb C^2$ and $D=\{y^2=x^3\}$. Then $\Omega_X^{[1]}(\log D)$ is locally free and $D$ is irreducible, but $D$ is not normal.
\end{ex}

For smooth varieties $X$ and arbitrary reduced divisors $D$ logarithmic vector fields, logarithmic differential forms and their properties have been studied a lot. 
Recently, precise conditions under which a reduced divisor $D$ in a smooth variety $X$ such that $\mathcal T_X(-\log D)$ is locally free is normal crossing were given in \cite{GrangerSchulze} and \cite{Faber}. 

In this article,
we study the pairs $(X,D)$ with locally free sheaf $\Omega_X^{[1]}(\log D )$,
where $X$ is allowed to be singular.
If $D=\sum_i a_iD_i$, $a_i\in \mathbb Q$,
is an effective Weil divisor, let  
$\lfloor D\rfloor =\sum_i\lfloor a_i\rfloor D_i$ denote its rounddown.
We completely answer the above question for pairs $(X,\lfloor D\rfloor )$ such that there is a pair $(X,D)$ that is dlt or lc, or such that the sheaf $\Omega_X^{[1]}(\log \lfloor D\rfloor )$ of reflexive logarithmic $1$-forms is locally generated by closed forms:

\begin{thm}[cf.\ Theorems~\ref{thm: dlt pairs}, \ref{thm: closed one forms}, \ref{thm: locally free lc pair}]\label{thm: main thm}
Let $(X,D)$ be a pair
consisting of a normal quasi-projective variety $X$ and a divisor $D=\sum_i a_i D_i$, where $D_i$ are distinct prime divisors, $a_i\in \mathbb Q$ and $0\leq a_i\leq 1$.
Assume that $\Omega_X^{[1]}(\log \lfloor D\rfloor)$ is locally free. Then the following holds:
\begin{enumerate}
\item[(a)] If the sheaf $\Omega_X^{[1]}(\log \lfloor D\rfloor)$
of reflexive logarithmic $1$-forms is locally generated by closed forms, then $(X,\lfloor D\rfloor)$ is toroidal.
\item[(b)] If the pair $(X,D)$ is dlt, then $X$ is smooth and
$\lfloor D\rfloor$ is an snc divisor.
If $(X,D)$ is lc, then $(X,\lfloor D\rfloor)$ is toroidal.
\end{enumerate}
\end{thm}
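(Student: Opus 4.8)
The plan is to reduce everything to the two technical pillars cited in the statement: the structure theory of dlt/lc singularities and the characterization of toroidal pairs via closed logarithmic forms. First I would treat part (a). Assume $\Omega_X^{[1]}(\log\lfloor D\rfloor)$ is locally free of rank $n=\dim X$ and, near a fixed point $x$, is generated by closed logarithmic $1$-forms $\omega_1,\dots,\omega_n$. Being closed, locally on the smooth locus each $\omega_i$ is exact, $\omega_i=df_i$ with $f_i$ a rational function with at worst logarithmic poles along $D$; the key observation is that the $f_i$ (or their exponentials along the components of $\lfloor D\rfloor$) define a morphism to a toric variety, and I would check that freeness forces this morphism to be étale, hence a local analytic isomorphism onto a toric chart. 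Concretely, one builds from the $\omega_i$ a local map to $\mathbb G_m^k\times\mathbb A^{n-k}$ (or a suitable toric variety) and shows that the induced map on logarithmic cotangent sheaves is the given isomorphism, so by the inverse function theorem for the pair the map is a toroidal chart around $x$. This is essentially the content of Theorem~\ref{thm: closed one forms}, so in the final write-up I would simply invoke it.

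For part (b), the strategy is to show that the dlt (resp. lc) hypothesis, combined with local freeness of $\Omega_X^{[1]}(\log\lfloor D\rfloor)$, \emph{implies} the hypothesis of part (a), namely that the logarithmic $1$-forms can be taken closed; then toroidality follows from (a), and in the dlt case one upgrades toroidal + dlt to smooth with snc boundary. The bridge is the theory of reflexive differentials on lc pairs: by the extension theorems of Greb--Kebekus--Kovács--Peternell and their logarithmic refinements, reflexive (logarithmic) differential forms on an lc pair pull back to genuine logarithmic forms on a log resolution, and one has good control of the logarithmic de Rham complex. I would take a dlt (resp. lc) pair $(X,D)$, pass to a log resolution $\pi\colon \tilde X\to X$ with $\pi$-exceptional and strict-transform divisor $\tilde D$ snc, pull back a local frame of $\Omega_X^{[1]}(\log\lfloor D\rfloor)$, and argue that on $\tilde X$ the pulled-back forms — being logarithmic $1$-forms on an snc pair — are cohomologous to closed ones after possibly shrinking, using that $H^1$ of the relevant logarithmic de Rham complex vanishes locally (or using a residue/weight argument to split off the closed part). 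Pushing forward and using reflexivity, the local frame on $X$ is then generated by closed forms, so part (a) applies and $(X,\lfloor D\rfloor)$ is toroidal.

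It remains, in the dlt case, to improve ``toroidal'' to ``smooth and $\lfloor D\rfloor$ snc.'' Here I would use that a toroidal pair $(X,\lfloor D\rfloor)$ is, étale-locally, a toric pair, and that a toric pair is automatically lc; being also dlt pins down the combinatorics. The point is that discrepancies of torus-invariant divisors over a toric variety are computed by the fan, and the dlt condition (together with $\lfloor D\rfloor$ being exactly the invariant boundary) forces every cone of the fan to be nonsingular and the divisor to be a coordinate subspace arrangement — i.e., the toric chart is $(\mathbb A^n, \{z_1\cdots z_k=0\})$. Equivalently one invokes the known fact that a dlt toroidal pair is snc. This is the content of Theorems~\ref{thm: dlt pairs} and \ref{thm: locally free lc pair}, which I would cite for the final two sentences.

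The main obstacle I anticipate is the bridge step in (b): showing that local freeness of $\Omega_X^{[1]}(\log\lfloor D\rfloor)$ on an lc pair is enough to guarantee that one can choose \emph{closed} local generators. On a smooth snc pair this is elementary (logarithmic $1$-forms are, Zariski-locally, sums $\sum c_i\,dz_i/z_i+\sum d\!\log(\text{units})$, all closed after adjustment), but on a singular lc pair one must pass through a resolution, control the behavior of the forms on the exceptional locus, and come back down while preserving reflexivity — and the subtlety is that a reflexive $1$-form need not be closed even when its restriction to $X_{\mathrm{reg}}$ is, unless one has the GKKP-type extension to a resolution in hand. I expect the real work to be in making this descent precise; everything else is either the inverse function theorem (part (a)) or toric combinatorics (the dlt-implies-snc step).
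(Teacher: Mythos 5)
The critical gap is the ``bridge step'' in your treatment of (b): you want to deduce from the lc (or dlt) hypothesis that $\Omega_X^{[1]}(\log\lfloor D\rfloor)$ has local generators that are \emph{closed}, and you propose to get this by pulling a frame back to a log resolution and using local vanishing of $H^1$ of the logarithmic de Rham complex to make the forms ``cohomologous to closed ones.'' This cannot work. A $1$-form that differs from a closed form by an exact one is itself closed, so the statement is either vacuous or false; and a general logarithmic $1$-form on an snc pair, e.g.\ $z_2\,dz_1/z_1$ on $(\mathbb A^2,\{z_1=0\})$, is simply not closed and admits no decomposition as (closed) $+\;df$. What is actually needed is a \emph{different} frame consisting of closed forms, which by Lemma~\ref{lemma: closed equals commuting} is equivalent to the dual frame of logarithmic vector fields spanning an abelian Lie algebra --- a geometric condition that no local cohomological argument will produce. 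The paper earns it only through the machinery of Section~\ref{section: lc pairs}: Winkelmann's classification together with a negativity/bigness argument showing each component of the fibre over a common zero of all logarithmic vector fields is toric (Lemma~\ref{lemma: exceptional divisor is toric}), then Poincar\'e's linearization theorem and averaging over an induced $S^1$-action to replace the given frame by a commuting one (Proposition~\ref{prop: isolated lc case}), and finally induction on the dimension of the non-toroidal locus via general hyperplane sections (Theorem~\ref{thm: locally free lc pair}). You correctly flag this step as the main obstacle, but the mechanism you offer for it would fail.

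Two further points. In (a), the chart you build by integrating the $\omega_i$ and exponentiating only makes sense, and the inverse function theorem only applies, where $X$ is smooth; the entire difficulty of Theorem~\ref{thm: closed one forms} lies at the singular points of $X$, where the paper instead globalizes the flows of the commuting dual vector fields to a linear $(\mathbb C^*)^n$-action on a Stein complexification (via Heinzner's equivariant theory) and identifies that space as an affine toric variety by an orbit-convexity argument. Finally, the paper does not deduce the dlt statement from toroidality at all: Theorem~\ref{thm: dlt pairs} is a short direct argument comparing residues of the extended frame along the strict transform and along an exceptional divisor meeting it on a log resolution, combined with the Lipman--Zariski conjecture for klt spaces away from $\lfloor D\rfloor$. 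Your alternative route (toroidal $+$ dlt $\Rightarrow$ snc, by computing discrepancies of invariant divisors on the toric model) is sound as a final step, but it sits downstream of the bridge step that is missing.
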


Recall that we call a pair $(X,D)$ toroidal if $X$ is locally (in the analytic topology) isomorphic to a toric variety $Y$ and $D$
is a reduced divisor corresponding to the complement of the open torus orbit in $Y$.
A consequence of Theorem~\ref{thm: main thm} and 
\cite[Theorem 1.4.2]{GrafKovacsDuBois} is the same result for Du Bois pairs, which is stated in Corollary~\ref{cor: DuBois case}.

In the special case of a projective lc pair $(X,D)$ with globally free sheaf $\Omega_X^{[1]}(\log \lfloor D\rfloor)$, the main result of \cite{WinkelmannTrivialLogBundle} on compact K\"ahler manifolds with trivial logarithmic tangent bundle has direct implications for the geometry of $(X,D)$:

\begin{cor}[cf.\ Corollary~\ref{cor: global lc}]
Let $(X,D)$ be an lc pair such that $X$ is projective.
Then the sheaf $\Omega_X^{[1]}(\log \lfloor D\rfloor )$ is free if and 
only of there is a semi-abelian variety $T$ which acts on~$X$ with $X\setminus  \lfloor D\rfloor$ as an open orbit.
\end{cor}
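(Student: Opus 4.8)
The plan is to prove the two implications separately, using Theorem~\ref{thm: main thm}(b) together with the structure theorem of \cite{WinkelmannTrivialLogBundle} for compact K\"ahler manifolds with trivial logarithmic tangent bundle. Write $U:=X\setminus\lfloor D\rfloor$ and $n:=\dim X$. For the implication ``$\Leftarrow$'', suppose a semi-abelian variety $T$ acts on $X$ with $U$ as an open orbit. After replacing $T$ by its image in $\Aut(X)$ and then by the quotient by the connected component of the stabilizer of a point of $U$, we may assume the action is faithful, $\dim T=n$, and $U$ is a $T$-orbit with finite stabilizers. Since $\lfloor D\rfloor$ is the union of the lower-dimensional orbit closures it is $T$-invariant, so the infinitesimal action defines a morphism
\[
 \varphi\colon \operatorname{Lie}(T)\otimes_{\mathbb{C}}\mathcal{O}_X\longrightarrow \mathcal{T}_X(-\log\lfloor D\rfloor)=\bigl(\Omega_X^{[1]}(\log\lfloor D\rfloor)\bigr)^{\vee}
\]
into the (reflexive) logarithmic tangent sheaf. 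On $U$ this is the trivialization of $\mathcal{T}_U$ by the invariant vector fields, hence an isomorphism; along the generic point of a component of $\lfloor D\rfloor$ --- where $X$ is smooth, being normal, the pair is analytically the product of a smooth variety with $(\mathbb{A}^1,\{0\})$, and the one-dimensional subgroup of $T$ fixing that point acts with non-zero weight on the normal direction --- a short local computation shows $\varphi$ is an isomorphism as well. So $\varphi$ is an isomorphism away from a closed subset of codimension at least two, and since both sheaves are reflexive on the normal variety $X$, $\varphi$ is an isomorphism. Dualizing, $\Omega_X^{[1]}(\log\lfloor D\rfloor)\cong\mathcal{O}_X^{\oplus n}$ is free. (This direction is, in essence, Example~\ref{ex: toric var} carried out globally.)

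For the implication ``$\Rightarrow$'', suppose $\Omega_X^{[1]}(\log\lfloor D\rfloor)$ is free; then it is locally free, so Theorem~\ref{thm: main thm}(b) applies to the lc pair $(X,D)$ and shows that $(X,\lfloor D\rfloor)$ is toroidal. I would then choose a toroidal log resolution $f\colon\widetilde X\to X$ --- a composition of blow-ups of toroidal ideal sheaves --- with $\widetilde X$ smooth projective, $\widetilde D:=f^{-1}(\lfloor D\rfloor)_{\mathrm{red}}$ an snc divisor, and $f$ an isomorphism over $U$; here one uses that $X_{\mathrm{sing}}\subseteq\lfloor D\rfloor$ for a toroidal pair, so that every $f$-exceptional divisor lies in $\widetilde D$, which is thus the full toroidal boundary of $\widetilde X$. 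In the toric charts the logarithmic cotangent sheaf of a toric variety is canonically the constant sheaf $M\otimes_{\mathbb{Z}}\mathcal{O}$ on the character lattice $M$, compatibly with dominant toric morphisms; gluing these identifications gives a canonical isomorphism $\Omega_{\widetilde X}^1(\log\widetilde D)\cong f^{*}\Omega_X^{[1]}(\log\lfloor D\rfloor)$, which is therefore free. Hence the logarithmic tangent bundle of $(\widetilde X,\widetilde D)$ is trivial, and since $\widetilde X$ is a projective --- in particular compact K\"ahler --- manifold, \cite{WinkelmannTrivialLogBundle} yields a semi-abelian variety $T$ acting on $\widetilde X$ with $\widetilde X\setminus\widetilde D$ as an open orbit; indeed $\widetilde X$ is then a smooth equivariant compactification of $T$ with $\widetilde X\setminus\widetilde D\cong T$. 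Since $f$ is an isomorphism over $U$ we have $\widetilde X\setminus\widetilde D=U$, so $T$ acts on $U$ with $U$ as an open orbit. Finally I would descend the action along $f$: the exceptional locus of $f$ lies in $\widetilde D=\widetilde X\setminus T$ and is a union of $T$-orbit closures, and since $f$ is a toroidal morphism the $T$-action permutes its fibres; as $f_{*}\mathcal{O}_{\widetilde X}=\mathcal{O}_X$, each $t\in T$ induces an automorphism of $X$, giving a $T$-action on $X$ with open orbit $f(T)=U=X\setminus\lfloor D\rfloor$.

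I expect the descent in the last step to be the main obstacle: one must arrange the resolution to be toroidal \emph{over} $X$ and then verify that the contraction $f$ is equivariant for the $T$-action produced \emph{a posteriori} by \cite{WinkelmannTrivialLogBundle} --- equivalently, that a normal projective variety which is toroidal with boundary $X\setminus T$ and contains the semi-abelian variety $T$ as its open part is automatically a $T$-equivariant compactification. This is precisely where the combinatorial dictionary for toroidal embeddings (and for toroidal compactifications of semi-abelian varieties) enters. By comparison, the isomorphism $\Omega_{\widetilde X}^1(\log\widetilde D)\cong f^{*}\Omega_X^{[1]}(\log\lfloor D\rfloor)$ and the verification of the hypotheses of \cite{WinkelmannTrivialLogBundle} are routine once the toroidal local models have been written down.
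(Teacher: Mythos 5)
Your overall strategy --- reduce to a smooth projective snc model and apply \cite{WinkelmannTrivialLogBundle} there --- is the right one, but the way you set it up creates a gap that the paper's argument avoids. For the implication ``free $\Rightarrow$ action'' you first invoke Theorem~\ref{thm: main thm}(b) (that is, Theorem~\ref{thm: locally free lc pair}, by far the hardest result of the paper) to get toroidality, then pass to a toroidal resolution, and finally must descend the $T$-action from $\widetilde X$ to $X$; you yourself flag this descent as ``the main obstacle'' and do not carry it out. As written this is a genuine gap: Winkelmann's theorem produces the $T$-action on $\widetilde X$ only \emph{a posteriori}, and nothing in your construction guarantees that the contraction $f$ is equivariant for it. The paper sidesteps both issues at once by using the \emph{functorial} log resolution $\pi:\widetilde X\rightarrow X$ of Proposition~\ref{prop: log resolution}. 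Corollary~\ref{cor: locally free} (which rests on the extension theorem, Theorem~\ref{thm: extension}, and needs no toroidality) shows directly that freeness of $\Omega_X^{[1]}(\log\lfloor D\rfloor)$ implies freeness of $\Omega_{\widetilde X}^{[1]}(\log\lfloor\widetilde D\rfloor)$, so Winkelmann applies to $(\widetilde X,\lfloor\widetilde D\rfloor)$ immediately. The descent is then automatic: the Lie algebra of $T$ is spanned by logarithmic vector fields on $(\widetilde X,\lfloor\widetilde D\rfloor)$, which by Proposition~\ref{prop: log resolution} are exactly the lifts of logarithmic vector fields on $(X,\lfloor D\rfloor)$; their flows are global on the compact $\widetilde X$, compatible with $\pi$, and therefore integrate to a $T$-action on $X$ with $X\setminus\lfloor D\rfloor$ as an open orbit. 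So the appeal to Theorem~\ref{thm: main thm}(b) and to toroidal resolutions should be deleted entirely.

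For the converse, your direct argument (trivialize $\mathcal T_X(-\log\lfloor D\rfloor)$ by the infinitesimal action and conclude by reflexivity) is a legitimate alternative in spirit, but the key local claim --- that at a general point of a boundary component the stabilizer is one-dimensional and acts with non-zero weight on the normal direction, so that the invariant vector fields still span the logarithmic tangent space there --- is precisely the local structure theory of equivariant compactifications of semi-abelian varieties and is not ``a short local computation'' without further input. The paper again routes through the resolution: the action lifts to $\widetilde X$ by \cite[Proposition~3.9.1]{KollarSingularities} with $\widetilde X\setminus\lfloor\widetilde D\rfloor$ as an open orbit (note that $X\setminus\lfloor D\rfloor$, being an orbit, is smooth, so $\pi$ is an isomorphism over it), Winkelmann's converse gives freeness of $\mathcal T_{\widetilde X}(-\log\lfloor\widetilde D\rfloor)$, and pushing forward along $\pi$ using $\pi_*\mathcal O_{\widetilde X}=\mathcal O_X$ and Proposition~\ref{prop: log resolution} gives freeness on $X$.
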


\subsection*{Outline of the article}
First some definitions and notation in the context of pairs and logarithmic vector fields and $1$-forms are recalled in Section~\ref{section: notation}. In Section~\ref{section: methods}, some facts about extension of logarithmic differential forms, flows of vector fields on varieties, and residues of logarithmic $1$-forms are collected.

The case of dlt pairs is considered in Section~\ref{section: dlt}.
In Section~\ref{section: locally free with closed forms} we study pairs whose sheaf of reflexive logarithmic $1$-forms is locally generated by closed forms, and use globalisation techniques in order to obtain local embeddings into toric varieties.
Finally, the case of lc pairs is considered in Section~\ref{section: lc pairs}.
The statement for lc pairs in Theorem~\ref{thm: main thm}~(b)
is proven by reducing to part (a) of the theorem.
If the singular locus of an lc pair $(X,D)$ consists of isolated points, we prove that locally there exist closed reflexive logarithmic $1$-forms spanning the sheaf of logarithmic $1$-forms; see 
Proposition~\ref{prop: isolated lc case}.
Then an argument using hyperplane sections is used to reduce to this case and thus the setting as in Theorem~\ref{thm: main thm}~(a).

\subsection*{Acknowledgements}
I would like to thank Stefan Kebekus for introducing me to the topic and for fruitful mathematical discussions.

\newpage
\section{Definitions and Notation}\label{section: notation}

\begin{convention}
Throughout, we work over the complex numbers and all varieties are complex algebraic varieties. We will also work with the induced structure of a complex space on an algebraic variety, 
and open neighbourhoods are allowed to be open neighbourhoods in the analytic $\mathbb C$-topology.
\end{convention}

\subsection{Pairs}
In the following, we fix the notation for a few important definitions in the context of pairs. Definitions and more details may be found in \cite[Chapter~2]{KollarMori}.

\begin{defi}[Pair]\label{defi: pair}
A pair $(X,D)$ is a pair consisting of a normal quasi-projective complex variety~$X$ and a divisor $D=\sum_{i} a_iD_i$, where $D_1,\ldots , D_k$ are distinct prime divisors, $a_i\in \mathbb Q$, and $0\leq a_i\leq 1$.

The rounddown $\lfloor D\rfloor$ of the divisor $D$ is defined as
$\lfloor D\rfloor =\sum_i \lfloor a_i\rfloor D_i$.
The pair $(X,D)$ is called snc if $X$ is smooth and $D$ is snc, that is, all intersections
$D_{i_1}\cap\ldots D_{i_k}$ are smooth.

The singular locus $Z=(X,D)_{\mathrm{sing}}$ of a pair $(X,D)$ is the smallest closed subset $Z\subset X$ such $(X\setminus Z,D|_{X\setminus Z})$ is snc.
\end{defi}

Note that Definition~\ref{defi: pair} is slightly less general than \cite[Definition~2.25]{KollarMori} since we put the additional assumption that 
$0\leq a_i\leq 1$ on the coefficients $a_i$ of the divisor $D$.


\begin{notation}
We use the abbreviations klt, plt, dlt, and lc for Kawamata log terminal, purely log terminal, divisorially log terminal and log terminal. For definitions of these notions see \cite[Definition 2]{KollarMori}.
\end{notation}

\begin{defi}[Log resolution]
 Let $(X,D)$ be a pair.
 A log resolution of $(X,D)$ is a proper surjective birational morphism $\pi:\tilde X\rightarrow X$ defined on a smooth variety $\tilde X$
 such that its exceptional divisor $E=\mathrm{Exc}(\pi)$ is of pure codimension $1$ and 
 the divisor $\mathrm{Exc}(\pi)+\overline{D}$ is snc, where $\overline{D}$ is the strict transform of $D$, and $E=\mathrm{Exc}(\pi)$ is endowed with the induced reduced structure.

 We will furthermore only consider log resolutions which are strong in the sense that
 $\pi$ induces an isomorphism $\tilde X\setminus (\pi^{-1}(Z))\rightarrow X\setminus Z$
 outside the singular set $Z=(X,D)_\mathrm{sing}$ of $(X,D)$.
\end{defi}

%

%

\subsection{Logarithmic $1$-forms}
The notion of a logarithmic $1$-form is essential for this article.

\begin{notation}[Sheaves of $1$-forms]
Let $(X,D)$ be a pair.
We denote the sheaf of K\"ahler differential $1$-forms on~$X$ by $\Omega_X^1$ and the sheaf of reflexive differential $1$-forms by $\Omega_X^{[1]}$.

The sheaf of K\"ahler logarithmic $1$-forms is denoted by $\Omega_X^1(\log \lfloor D\rfloor)$ and the sheaf of reflexive logarithmic $1$-forms by $\Omega_X^{[1]}(\log\lfloor D\rfloor)$.
\end{notation}

\begin{rmk}
 We have $\Omega_X^{[1]}=(\Omega_X^1)^{**}=\iota_*(\Omega_{X_{\mathrm{reg}}}^1)$, where $\iota:X_{\mathrm{reg}}\hookrightarrow X$ denotes the inclusion of the smooth locus $X_{\mathrm{reg}}$ of~$X$.
 A reflexive $1$-form on an open subset $U\subseteq X$
is thus simply given by a $1$-form on the smooth part $U\cap X_{\mathrm{reg}}$.

Similarly, a reflexive logarithmic $1$-form on $U$ is given by logarithmic $1$-form on $U'=U\setminus (U,\lfloor D\rfloor|_U)_{\mathrm{sing}}$.
Recall that a rational $1$-form $\sigma$ on $U'$ is logarithmic 
if $\sigma$ is regular on $U'\setminus\lfloor D\rfloor$ 
and $\sigma$ and $d\sigma$ have at most first order poles along each irreducible component of $\lfloor D\rfloor$.

On $X\setminus\lfloor D\rfloor$ the notion of reflexive $1$-forms and of reflexive logarithmic $1$-forms coincide and 
we have $\Omega_X^{[1]}\cong \Omega_X^{[1]}(\log \lfloor D\rfloor)$.
\end{rmk}

\begin{convention}
Throughout the article, we always consider reflexive (logarithmic) $1$-forms and thus
a (logarithmic) $1$-form shall always mean a
reflexive (logarithmic)$1$-form.
\end{convention}

\subsection{Vector fields}
Dual to the notion of $1$-forms, there is the notion of vector fields on a variety:

\begin{notation}[Tangent sheaf]
 Recall that a vector field on a normal variety $X$ is a $\mathcal O_X$-linear derivation $\mathcal O_X\rightarrow \mathcal O_X$ of sheaves. 
 We denote the sheaf of vector fields on $X$ (or tangent sheaf of $X$) by~$\mathcal T_X$,
 and the sheaf of logarithmic vector fields (or logarithmic tangent sheaf) on a pair $(X,D)$ by $\mathcal T_X(-\log \lfloor D\rfloor)$.
\end{notation}

\begin{rmk}
A vector field on a normal variety $X$ could also be defined to be a vector field on the smooth locus $X_{\mathrm{reg}}$ of $X$, and $\mathcal T_X$ as $\mathcal T_X=\iota_*(T_{X_{\mathrm{reg}}})$ if $\iota:X_{\mathrm{reg}}\hookrightarrow X$ denotes again the 
inclusion of the smooth locus.

The sections of the logarithmic tangent sheaf $\mathcal T_X(-\log \lfloor D\rfloor)$  of a pair $(X,D)$ are precisely those vector fields
on $X$ whose flows (in the sense of Section~\ref{subsection: flows}) stabilise $\lfloor D\rfloor$ as a set.

Vector fields and $1$-forms are dual, and we have
$\mathcal T_X=(\Omega_X^1)^{*}=(\Omega_X^{[1]})^*$ and $\mathcal T_X(-\log \lfloor D\rfloor)=(\Omega_X^{[1]}(\log \lfloor D\rfloor))^*$.
In particular, the logarithmic tangent sheaf $\mathcal T_X(-\log \lfloor D\rfloor)$ is (locally) free if and only if
$\Omega_X^{[1]}(\log \lfloor D\rfloor))$ is locally free.
\end{rmk}

\section{Methods}\label{section: methods}
\subsection{Extension of differential forms}
The extension of logarithmic forms on pairs to log resolutions 
is an important tool. The following result was proven in \cite{GKKP11}:
\begin{thm}[Extension Theorem, {\cite[Theorem 1.5]{GKKP11}}]\label{thm: extension}
Let $(X,D)$ be an lc pair, $\pi:\tilde X\rightarrow X$ a log resolution, and let $\tilde D$ be the largest reduced divisor contained in the support of $\pi^{-1}(W)$, where $W$ is the smallest closed subset such that $(X\setminus W, D|_{X\setminus D})$ is klt.
Then the sheaf $\pi_*\Omega_{\tilde X}^p(\log \tilde D)$ is reflexive for any $p\leq \dim X$.\qed
\end{thm}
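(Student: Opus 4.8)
The plan is to rephrase reflexivity of $\pi_*\Omega_{\tilde X}^p(\log\tilde D)$ as an extension statement for pulled-back reflexive forms, and to prove that statement by induction on $\dim X$, cutting down by general hyperplanes and controlling the obstruction with a vanishing theorem.

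\textbf{Reformulation.} Since $\Omega_{\tilde X}^p(\log\tilde D)$ is locally free and $\pi$ is proper and birational, $\mathcal F_p:=\pi_*\Omega_{\tilde X}^p(\log\tilde D)$ is a torsion-free coherent subsheaf of the reflexive sheaf $\Omega_X^{[p]}(\log\lfloor D\rfloor)$, and the inclusion is an isomorphism over the open set $X_0:=X\setminus(X,\lfloor D\rfloor)_{\mathrm{sing}}$: there $\pi$ is an isomorphism, $(X_0,\lfloor D\rfloor|_{X_0})$ is snc, and $\tilde D$ restricts to $\lfloor D\rfloor|_{X_0}$. As $X\setminus X_0$ has codimension $\geq 2$ and $\Omega_X^{[p]}(\log\lfloor D\rfloor)$ is reflexive, $\mathcal F_p$ is reflexive if and only if $\mathcal F_p=\Omega_X^{[p]}(\log\lfloor D\rfloor)$, i.e.\ if and only if every reflexive logarithmic $p$-form $\sigma$ on $X$ pulls back to a form on $\tilde X$ having at worst logarithmic poles along $\tilde D$ and no poles elsewhere. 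A priori $\pi^*\sigma$ is merely a rational $p$-form, regular and logarithmic along $\tilde D$ over $\pi^{-1}(X_0)$; it need not remain so over $\pi^{-1}(X\setminus X_0)$, which in general is a divisor in $\tilde X$ — this is the heart of the difficulty. The question is local on $X$, so one fixes a closed point $x\in X\setminus X_0$ and a small affine neighbourhood $U$ and must show $\pi^*\sigma$ extends as a logarithmic form across $\pi^{-1}(x)$.

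\textbf{Induction by cutting down.} We induct on $n=\dim X$. For $n\leq 2$ the claim follows from the classical snc case together with the explicit classification of two-dimensional lc singularities, whose minimal resolutions are completely understood. For $n\geq 3$, choose a general hyperplane $H\ni x$ in some projective completion of $X$. By Bertini-type theorems for pairs, $(H\cap X,D|_{H\cap X})$ is again lc, $\tilde H:=\pi^{-1}(H)$ is smooth with $(\tilde D+\mathrm{Exc}(\pi))|_{\tilde H}$ snc, and $\pi|_{\tilde H}\colon\tilde H\to H\cap X$ is a log resolution whose associated non-klt divisor is $\tilde D|_{\tilde H}$. Since $\tilde H$ is transverse to $\tilde D$, the rational form $(\pi^*\sigma)|_{\tilde H}$ is the pullback of the reflexive logarithmic form $\sigma|_{(H\cap X)_0}$, so the induction hypothesis applied to $(H\cap X,D|_{H\cap X})$ shows it extends as a logarithmic form across $\pi^{-1}(x)\cap\tilde H$.

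\textbf{Climbing back up — the main obstacle.} It remains to deduce that $\pi^*\sigma$ itself extends across $\pi^{-1}(x)$ from the fact that its restrictions to general hyperplane sections do; this does not follow formally, since a rational form whose restriction to a transverse divisor is regular may still have poles (for instance $y/x^{2}\,dx$ has a pole along $\{x=0\}$ yet restricts to $0$ on $\{y=0\}$), and one must exploit that our form comes from a reflexive form on the lc pair $X$. The obstruction to the extension is a section of a coherent sheaf supported on $\pi^{-1}(x)$, equivalently a local cohomology class; feeding in the conormal and residue sequences for $\tilde H\subset\tilde X$ together with Deligne's Hodge theory of logarithmic de Rham complexes, this class is governed by the cohomology of sheaves $\Omega_{\tilde X}^q(\log\tilde D)$ twisted by a suitable effective $\pi$-exceptional divisor, and must be shown to vanish via a relative Akizuki--Nakano--Steenbrink type vanishing theorem for log resolutions of lc pairs. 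Establishing that vanishing in the lc (and not merely klt) setting, and organising the attendant spectral sequences and residue calculus so that $D$, its round-down $\lfloor D\rfloor$, the exceptional divisors, and especially those of discrepancy $-1$ all interact correctly, is the crux; the remaining inputs — Deligne's theory of logarithmic forms, Bertini for pairs, Grauert--Riemenschneider-type relative vanishing settling the top degree $p=n$, and the surface check — are classical. A more conceptual proof of the same statement, bypassing the case analysis, can alternatively be obtained from the theory of mixed Hodge modules and Du Bois complexes, by identifying $\mathcal F_p$ with a cohomology sheaf of a Du Bois-type complex whose formation is compatible with log resolutions.
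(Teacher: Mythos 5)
The paper does not prove this statement: it is imported verbatim from \cite[Theorem~1.5]{GKKP11} and used as a black box (note the \verb|\qed| directly after the statement), so there is no internal proof to compare yours against. Measured against the actual proof in that reference, your outline correctly identifies the overall architecture: the reformulation of reflexivity of $\pi_*\Omega^p_{\tilde X}(\log\tilde D)$ as the assertion that logarithmic $p$-forms defined on the snc locus of $(X,\lfloor D\rfloor)$ pull back to $\tilde X$ without acquiring poles along the $\pi$-exceptional divisors, the induction on $\dim X$ by cutting with general hyperplanes, the separate treatment of the surface case via the classification of lc surface singularities, and the use of relative vanishing in top degree. Your reduction step (two reflexive sheaves agreeing outside codimension two are equal, so $\mathcal F_p$ is reflexive iff it equals $\Omega_X^{[p]}(\log\lfloor D\rfloor)$) is also correct.

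However, the proposal is a roadmap rather than a proof, and the gap is exactly where you place it. The step you label ``the crux'' --- showing that the obstruction to extending $\pi^*\sigma$ across $\pi^{-1}(x)$ vanishes --- is asserted to follow from ``a relative Akizuki--Nakano--Steenbrink type vanishing theorem for log resolutions of lc pairs'' together with unspecified spectral sequences and residue calculus, but no such theorem is stated precisely, the cohomology group housing the obstruction is never identified, and there is no treatment of the discrepancy~$-1$ divisors, which is precisely where the lc case genuinely departs from the klt case. In \cite{GKKP11} this material occupies the bulk of a long paper: a vanishing theorem for cohomology with support, a filtration/two-step argument to pass from hyperplane sections back to $X$, and separate arguments for $p=1$, $p=n$, $p=n-1$ and the general degree. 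As written, your argument cannot be checked or completed without essentially reproducing that work; it is an accurate description of the known strategy, not a proof.
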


This means that logarithmic forms defined on the regular part of the pair $(X,D)$, i.e.\ the largest open subset $Y\subseteq X$ such that $Y$ is smooth and $D|_Y$ is an snc divisor, extend to any log resolution.

\subsection{Flows of vector fields on varieties}\label{subsection: flows}

A useful result when studying vector fields on complex varieties is the following theorem by Kaup on the existence of local flows of holomorphic vector fields on complex spaces:
\begin{thm}[Existence of flows, {\cite[Satz 3]{Kaup}}]\label{thm: existence of flows}
 Let $X$ be a normal complex space and $\xi$ a holomorphic vector field on $X$. Then the local flow of $\xi$ exists, in other words, there is 
 an open subset $\Omega\subseteq \mathbb C\times X$ such that
 \begin{enumerate}
  \item the set $\Omega $ contains $\{0\}\times X$ and for each $x\in X$, $\Omega_x=\{t\in\mathbb C\,|\, (t,x)\in \Omega\}\subseteq \mathbb C$ is connected, and
  \item there exists a holomorphic map 
  $\varphi:\Omega \rightarrow X$ 
  with
  $\varphi(0,x)=x$ for all $x\in X$ and $\frac{d}{dt}f(\varphi(t,-))=\xi(f)(\varphi(t,-))$ for any holomorphic function $f$ defined on an open subset of~$X$.\qed
 \end{enumerate}
\end{thm}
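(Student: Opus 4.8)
The plan is to reduce everything to the classical theory of holomorphic ordinary differential equations, the only genuine issue being the singular locus of $X$. Over the regular locus $X_{\mathrm{reg}}$ the field $\xi$ is an honest holomorphic vector field, so Cauchy's existence theorem for holomorphic ODEs — with holomorphic dependence on the initial value and on a complex time parameter — produces a holomorphic local flow on an open subset of $\mathbb C\times X_{\mathrm{reg}}$. The real content is that this flow extends across $X_{\mathrm{sing}}$ with a domain that does not degenerate as one approaches a singular point, and I would get both by passing to an ambient flow.

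First I would localise: it suffices to construct, for each $x_0\in X$, a holomorphic map on $\Delta\times U$ with the required properties, where $U$ is a neighbourhood of $x_0$ and $\Delta=\{|t|<\varepsilon\}$, and then to glue. Fix $x_0$ and realise a Stein neighbourhood $U$ of $x_0$ in $X$ as a closed analytic subvariety of an open $V\subseteq\mathbb C^N$ with $x_0=0$; let $I\subseteq\mathcal O_V$ be the ideal of $U$ (radical, since $X$ is reduced) and $z_1,\dots,z_N$ the ambient coordinates. The functions $\xi(z_i|_U)\in\mathcal O_X(U)$ extend, after shrinking $V$, to holomorphic $\hat a_i\in\mathcal O_V$ (coherence of $I$, i.e.\ Cartan's Theorem~B on a Stein neighbourhood), and I set $\hat\xi=\sum_i\hat a_i\,\partial/\partial z_i$, a holomorphic field on $V$ that by construction restricts to $\xi$ on $U$. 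Classical theory gives a holomorphic flow $\hat\varphi\colon\Delta\times V'\to\mathbb C^N$ of $\hat\xi$ on a smaller neighbourhood $V'$ of $0$, with $\hat\varphi(0,\cdot)=\mathrm{id}$ and $\frac{d}{dt}\hat\varphi(t,z)=\hat\xi(\hat\varphi(t,z))$; crucially the time of existence $\varepsilon$ is \emph{uniform} on $V'$.

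The key step is to show $\hat\varphi$ preserves $U$, i.e.\ $\hat\varphi(t,U\cap V')\subseteq U$ for $|t|<\varepsilon$. At a point $p\in X_{\mathrm{reg}}$ the vector $\hat\xi(p)=(\hat a_1(p),\dots,\hat a_N(p))$ is precisely the value of $\xi$ at $p$ viewed inside $T_p\mathbb C^N$ (apply both derivations to $z_i$), hence lies in $T_pX_{\mathrm{reg}}$; so $\hat\xi$ is tangent to $X_{\mathrm{reg}}$, which means $\hat\xi(f)$ vanishes on $X_{\mathrm{reg}}$ for every $f\in I$. As $X_{\mathrm{reg}}$ is dense in $U$ and $\hat\xi(f)$ is continuous, $\hat\xi(f)$ vanishes on $U$, and $I$ being radical gives $\hat\xi(I)\subseteq I$. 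Choosing generators $f_1,\dots,f_m$ of $I$ and writing $\hat\xi(f_j)=\sum_k g_{jk}f_k$ with $g_{jk}\in\mathcal O_{V'}$, put $H_j(t,x):=f_j(\hat\varphi(t,x))$ for $x\in U\cap V'$; then $H_j(0,x)=0$ and $\frac{d}{dt}H_j(t,x)=\hat\xi(f_j)(\hat\varphi(t,x))=\sum_k g_{jk}(\hat\varphi(t,x))\,H_k(t,x)$, a linear ODE in $t$ with holomorphic coefficients and zero initial value, so uniqueness forces $H_j\equiv 0$. Thus $\hat\varphi(t,U\cap V')\subseteq U$, and $\varphi_{x_0}:=\hat\varphi|_{\Delta\times(U\cap V')}$ is the desired local flow; the identity $\frac{d}{dt}f(\varphi_{x_0}(t,-))=\xi(f)(\varphi_{x_0}(t,-))$ holds on $X_{\mathrm{reg}}$ by construction and hence on $U$ by density, and independence of the chosen embedding and extension follows from the same uniqueness argument over $X_{\mathrm{reg}}$.

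Finally I would glue: cover $X$ by such neighbourhoods; on overlaps the local flows agree, since they agree on the dense regular locus by uniqueness of integral curves on the manifold $X_{\mathrm{reg}}$ and are continuous. This produces an open $\Omega\subseteq\mathbb C\times X$ containing $\{0\}\times X$ and a holomorphic $\varphi\colon\Omega\to X$ with the stated properties; replacing $\Omega$ by the union of the sets $C\times\{x\}$ with $C\subseteq\mathbb C$ connected, $0\in C$, $C\times\{x\}\subseteq\Omega$ makes every fibre $\Omega_x$ connected. I expect the middle step to be the main obstacle: tangency of $\hat\xi$ to $X_{\mathrm{reg}}$ only yields invariance of the flow over the smooth locus, and it is the linear-ODE trick together with $\hat\xi(I)\subseteq I$ — which is exactly where the reducedness (or normality) of $X$ and the density of $X_{\mathrm{reg}}$ are used — that propagates invariance across $X_{\mathrm{sing}}$; securing a time of existence that does not shrink to zero near singular points is precisely why one passes to the ambient flow instead of attempting to extend the flow on $X_{\mathrm{reg}}$ directly.
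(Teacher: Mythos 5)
Your argument is correct. Note that the paper does not prove this statement at all --- it is quoted verbatim from Kaup (Satz 3 of \emph{Infinitesimale Transformationsgruppen komplexer R\"aume}) with a \qed --- and your reconstruction (local embedding into $\mathbb C^N$, ambient extension $\hat\xi$ of the derivation via Cartan's Theorem B, tangency to $X_{\mathrm{reg}}$ plus density and radicality giving $\hat\xi(I)\subseteq I$, and the linear ODE for $f_j\circ\hat\varphi$ forcing invariance of $U$) is essentially the classical proof. The only points you leave implicit are that the time-$t$ maps are local biholomorphisms of $U$ and hence preserve $X_{\mathrm{reg}}$, which is what justifies invoking uniqueness of integral curves on the manifold $X_{\mathrm{reg}}$ in the gluing step (alternatively, uniqueness can be run directly in the ambient $\mathbb C^N$ at singular points), and that one identifies $\hat\xi(\hat f)|_U$ with $\xi(f)$ by density on $U$ before substituting into the chain rule; both are routine.
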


Even though vector fields on a variety $X$ can in general not be pulled back by a morphism $f:Y\rightarrow X$ to vector fields on $Y$,
the existence of local flows allows us to lift vector fields on a variety to the functorial resolution of singularities
as in \cite[Theorem 3.35, 3.36]{KollarSingularities}.
A~detailed description of this procedure can be found in \cite[$\S$ 4.2]{GKK10}.

\begin{prop}\label{prop: log resolution}
 Let $(X,D)$ be a pair and $\pi:\tilde X\rightarrow X$ the functorial log resolution of the pair.
 Let $E=\mathrm{Exc}(\pi)$ denote the exceptional divisor of $\pi$ and set $\tilde D=E+\overline {D}$, where $\overline{D}$ is the strict transform of $D$.
 Then we have
 $$\mathcal T_X\cong \pi_*(\mathcal T_{\tilde X})\cong \pi_*(\mathcal T_{\tilde X}(-\log E))\ \ \ \text{ and }\ \ \ \mathcal T_X(-\log \lfloor D\rfloor )\cong \pi_*(\mathcal T_{\tilde X}(-\log \lfloor \tilde D\rfloor)). \eqno\qed$$
 \end{prop}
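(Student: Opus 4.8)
The plan is to establish each of the three claimed isomorphisms by the same mechanism: both sides are reflexive sheaves on the normal variety $X$, they agree on the big open set $U=X\setminus Z$ where $Z=(X,D)_{\mathrm{sing}}$ (equivalently, away from the exceptional locus of the strong log resolution $\pi$), and a reflexive sheaf is determined by its restriction to a big open set. So the key point is really that the pushforwards on the right-hand side are reflexive, together with the observation that the natural restriction maps are isomorphisms over $U$.

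First I would recall that $\mathcal T_X=(\Omega_X^{[1]})^{*}$ and $\mathcal T_X(-\log\lfloor D\rfloor)=(\Omega_X^{[1]}(\log\lfloor D\rfloor))^{*}$ are reflexive by construction, being duals of coherent sheaves. Next, for the right-hand sides: since $\pi$ is proper and birational and $\tilde X$ is smooth, the sheaves $\mathcal T_{\tilde X}$, $\mathcal T_{\tilde X}(-\log E)$ and $\mathcal T_{\tilde X}(-\log\lfloor\tilde D\rfloor)$ are locally free on $\tilde X$; by Theorem~\ref{thm: extension} (the Extension Theorem of \cite{GKKP11}), applied to $\Omega^1$ with the appropriate boundary, the pushforwards $\pi_*\Omega^1_{\tilde X}(\log\tilde D)$ and its variants are reflexive on $X$. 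Dualizing, or directly invoking that $\pi_*$ of a locally free sheaf on a resolution is reflexive when $X$ is normal and $\pi$ is a resolution (this is standard; cf.\ \cite[\S4.2]{GKK10} and the discussion preceding the proposition), I get that $\pi_*(\mathcal T_{\tilde X})$, $\pi_*(\mathcal T_{\tilde X}(-\log E))$ and $\pi_*(\mathcal T_{\tilde X}(-\log\lfloor\tilde D\rfloor))$ are all reflexive. Then I would construct the natural maps: a vector field on $X$ restricted to $X_{\mathrm{reg}}$ lifts, via local flows (Theorem~\ref{thm: existence of flows}) and functoriality of the resolution as in \cite[\S4.2]{GKK10}, to a vector field on $\tilde X$ that is logarithmic along $E$; this gives $\mathcal T_X\to\pi_*\mathcal T_{\tilde X}(-\log E)$, and the obvious inclusions $\pi_*\mathcal T_{\tilde X}(-\log E)\hookrightarrow\pi_*\mathcal T_{\tilde X}$ and $\pi_*\mathcal T_{\tilde X}\hookrightarrow\mathcal T_X$ (the latter because a vector field on $\tilde X$ pushes forward to a rational, hence regular by normality, vector field on $X$). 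For the logarithmic version, the lift of a vector field tangent to $\lfloor D\rfloor$ along the open orbit-like part stays tangent to the strict transform, and being logarithmic along $E$ was already arranged, so one lands in $\pi_*\mathcal T_{\tilde X}(-\log\lfloor\tilde D\rfloor)$.

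Finally I would check that each composite map, and each individual arrow, is an isomorphism over $U=X\setminus Z$: there $\pi$ is an isomorphism by the strongness assumption, and $\lfloor\tilde D\rfloor$ restricted to $\pi^{-1}(U)$ is just $\lfloor D\rfloor|_U$ as an snc divisor, so all four sheaves restrict to the same locally free sheaf on $U$ and all the maps are the identity there. Since $Z$ has codimension at least $2$ in $X$ (as $(X,D)$ is generically snc, indeed $X$ is normal so $X_{\mathrm{sing}}$ has codimension $\geq2$, and the non-snc locus of a reduced divisor on a smooth variety also has codimension $\geq 2$), $U$ is big, and a morphism between reflexive sheaves that is an isomorphism on a big open set is an isomorphism. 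This concludes all three isomorphisms.

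The step I expect to be the main obstacle is verifying that the lifted vector field on $\tilde X$ is genuinely logarithmic along the exceptional divisor $E$ (and along the strict transform $\overline D$ in the logarithmic case), rather than merely a rational vector field. This is exactly where the functoriality of the resolution and the flow argument of \cite[\S4.2]{GKK10} are needed: the flow of the vector field on $X$ preserves the non-snc locus $Z$ as a set (since $Z$ is canonically attached to the pair), hence by functoriality it lifts to a flow on $\tilde X$ preserving $\pi^{-1}(Z)\supseteq E$ as a set, and a vector field whose flow stabilizes a reduced divisor is logarithmic along it. Assembling this carefully — in particular making sure the lifted flow is defined on all of $\tilde X$ and not just over $X\setminus Z$, which uses properness of $\pi$ together with Hartogs-type extension across the codimension-$\geq2$ set — is the technical heart; everything else is the reflexive-sheaf formalism above.
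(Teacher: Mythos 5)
Your proposal follows essentially the same route as the paper, which only sketches this argument: lift the local flow of a vector field (Theorem~\ref{thm: existence of flows}) through the functorial resolution using that the flow map is a smooth morphism, observe that the lifted flow stabilises $E$ and the strict transform of $\lfloor D\rfloor$ so the lifted field is logarithmic, and then identify the sheaves via the big open set over which $\pi$ is an isomorphism. One caveat: your parenthetical claim that ``$\pi_*$ of a locally free sheaf on a resolution is reflexive'' is false in general (for the blow-up of the origin in $\mathbb A^2$ one has $\pi_*\mathcal O_{\tilde X}(-E)=\mathfrak m_0$, which is not reflexive), and the ``dualizing'' alternative does not repair this since $\pi_*$ does not commute with dualization; the reflexivity of these particular pushforwards is essentially equivalent to the lifting statement itself. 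This does not sink your argument, because you also construct maps in both directions ($\mathcal T_X\to\pi_*\mathcal T_{\tilde X}(-\log E)$ by lifting, and $\pi_*\mathcal T_{\tilde X}\to\mathcal T_X$ by pushforward), and since all the sheaves involved are torsion-free and the composite is the identity on the big open set $U$, injectivity of each arrow plus the identity composite already forces every arrow to be an isomorphism --- no a priori reflexivity of the pushforwards is needed.
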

 
 \begin{rmk}
   Proposition~\ref{prop: log resolution} means that every vector field $\xi$ on $X$ can be lifted to $\tilde X$ in the sense that there is a vector field $\tilde \xi$ on $\tilde X$ whose restriction to
 $\tilde X\setminus (\pi^{-1}((X,D)_{\mathrm{sing}}))\cong X\setminus (X,D)_{\mathrm{sing}}$ coincides with the restriction of $\xi$ to $X\setminus (X,D)_{\mathrm{sing}}$.
 The flow of $\tilde \xi$ stabilises the exceptional divisor $E$ and is thus logarithmic with respect to $E$.
 
 If a vector field $\xi$ on $X$ is logarithmic with respect to $D$, then $\tilde \xi$ is logarithmic with respect to $\tilde D=E+\overline {D}$.
\end{rmk}

The idea for the proof of this Proposition~\ref{prop: log resolution} is to consider the local flow of the given vector field on $X$, which exists by Theorem~\ref{thm: existence of flows}. Since the functorial resolution commutes with smooth morphisms and as the flow map $\varphi:\Omega\rightarrow X$ is a smooth morphism, it can be lifted to a local action $\tilde\varphi$ on $\tilde X$, which then induces a vector field $\tilde \xi$ on $\tilde X$. 
The flow map of a vector field on an algebraic variety is not necessarily algebraic, but in general a holomorphic map of complex spaces, one also needs to consider resolution of complex spaces at this point, see e.g.\ \cite[Theorem 3.45]{KollarSingularities}, and do the procedure for these. \\[1.5pt]
As a consequence of Proposition~\ref{prop: log resolution} and the extension result for logarithmic $1$-forms, we get the following:

\begin{cor}\label{cor: locally free}
Let $(X, D)$ be an lc pair and $\pi:\tilde X\rightarrow X$ the functorial log resolution, denote its exceptional divisor by $E$ and set $\tilde D=E+\overline{D}$, where $\overline{D}$ is the strict transform of $D$.
Let $U\subseteq X$ be an open subset such that the restriction of the sheaf $\Omega_X^{[1]}(\log  \lfloor D\rfloor)$ to $U$ is free (or equivalently, the the restriction of $\mathcal T_X(-\log  \lfloor D\rfloor)$ to $U$ is free).
Then the sheaves $\Omega_{\tilde X}^{[1]}(\log \lfloor\tilde D\rfloor)$ and $\mathcal T_{\tilde X}(-\log\lfloor \tilde D\rfloor)$ are free when restricted to $\pi^{-1}(U)$.
\end{cor}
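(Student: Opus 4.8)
The plan is to reduce at once to the case $U=X$ and then to lift a global frame of the logarithmic tangent sheaf \emph{and} a global frame of the sheaf of reflexive logarithmic $1$-forms to the resolution $\tilde X$, playing the two liftings off against each other via the duality pairing. For the reduction one uses that normality, quasi-projectivity and the lc property are inherited by open subsets and that the functorial log resolution is compatible with open immersions (it commutes with smooth morphisms); so we may assume $\Omega_X^{[1]}(\log\lfloor D\rfloor)\cong\mathcal O_X^{\,n}$, equivalently $\mathcal T_X(-\log\lfloor D\rfloor)\cong\mathcal O_X^{\,n}$, with $n=\dim X$, and must show that $\mathcal T_{\tilde X}(-\log\lfloor\tilde D\rfloor)$ and $\Omega_{\tilde X}^{[1]}(\log\lfloor\tilde D\rfloor)$ are free on $\tilde X$. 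Note that $\lfloor\tilde D\rfloor=E+\overline{\lfloor D\rfloor}$ is snc, being a reduced subdivisor of the snc divisor $\mathrm{Exc}(\pi)+\overline D$, so these two sheaves are a priori locally free of rank $n$ and mutually dual. By Proposition~\ref{prop: log resolution} the isomorphism $\mathcal T_X(-\log\lfloor D\rfloor)\cong\pi_*\mathcal T_{\tilde X}(-\log\lfloor\tilde D\rfloor)$ turns a frame $\xi_1,\dots,\xi_n$ of $\mathcal T_X(-\log\lfloor D\rfloor)$ into global logarithmic vector fields $\tilde\xi_1,\dots,\tilde\xi_n$ on $\tilde X$ that agree with the $\xi_i$ on $V:=\tilde X\setminus\pi^{-1}((X,D)_{\mathrm{sing}})\cong X\setminus (X,D)_{\mathrm{sing}}$.

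The main step — and the part I expect to be the real obstacle — is to prove the analogue of Proposition~\ref{prop: log resolution} for $1$-forms, namely $\pi_*\Omega_{\tilde X}^1(\log\lfloor\tilde D\rfloor)\cong\Omega_X^{[1]}(\log\lfloor D\rfloor)$; the tangent-sheaf statement is not self-dual, and this is where the lc hypothesis and the Extension Theorem~\ref{thm: extension} genuinely enter. I would let $W$ be the non-klt locus of $(X,D)$ and $\tilde D_W$ the largest reduced divisor supported on $\pi^{-1}(W)$, as in Theorem~\ref{thm: extension}. Because $(X,D)$ is lc, the only prime divisors contained in $W$ are the components $D_i$ with coefficient $a_i=1$ (their discrepancy being $-1$), and all remaining divisorial components of $\pi^{-1}(W)$ are $\pi$-exceptional; hence $\tilde D_W\subseteq E+\overline{\lfloor D\rfloor}=\lfloor\tilde D\rfloor$. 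Therefore, as subsheaves of the sheaf of meromorphic $1$-forms,
\[
\Omega_X^{[1]}(\log\lfloor D\rfloor)=\pi_*\Omega_{\tilde X}^1(\log\tilde D_W)\subseteq\pi_*\Omega_{\tilde X}^1(\log\lfloor\tilde D\rfloor)\subseteq\Omega_X^{[1]}(\log\lfloor D\rfloor),
\]
where the first equality uses that $\pi_*\Omega_{\tilde X}^1(\log\tilde D_W)$ is reflexive by Theorem~\ref{thm: extension} and agrees with the reflexive sheaf $\Omega_X^{[1]}(\log\lfloor D\rfloor)$ over $X\setminus (X,D)_{\mathrm{sing}}$ — whose complement has codimension at least $2$, since neither a generic point of $X$ nor the generic point of any $D_i$ lies in $(X,D)_{\mathrm{sing}}$ — and the last inclusion holds because a section of $\pi_*\Omega_{\tilde X}^1(\log\lfloor\tilde D\rfloor)$ is, on $V$, a logarithmic $1$-form with poles at worst along $\lfloor D\rfloor$ and hence extends by reflexivity of $\Omega_X^{[1]}(\log\lfloor D\rfloor)$. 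The sandwich forces all three sheaves to coincide.

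Granting the $1$-form analogue, the conclusion follows by a fibrewise linear-algebra argument. Take $\omega_1,\dots,\omega_n$ to be the frame of $\Omega_X^{[1]}(\log\lfloor D\rfloor)$ (which is free, being dual to $\mathcal T_X(-\log\lfloor D\rfloor)$) dual to $\xi_1,\dots,\xi_n$, so that $\langle\omega_i,\xi_j\rangle=\delta_{ij}$, and lift it to global logarithmic $1$-forms $\tilde\omega_1,\dots,\tilde\omega_n$ on $\tilde X$ restricting to the $\omega_i$ on $V$. The global functions $\langle\tilde\omega_i,\tilde\xi_j\rangle$ equal $\delta_{ij}$ on the dense open set $V$, hence on all of $\tilde X$. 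Consequently, at every point $p\in\tilde X$ the pairing between $\tilde\omega_1(p),\dots,\tilde\omega_n(p)$ and $\tilde\xi_1(p),\dots,\tilde\xi_n(p)$ in the (mutually dual, $n$-dimensional) fibres of $\Omega_{\tilde X}^1(\log\lfloor\tilde D\rfloor)$ and $\mathcal T_{\tilde X}(-\log\lfloor\tilde D\rfloor)$ at $p$ has the identity matrix, so in particular $\tilde\xi_1(p),\dots,\tilde\xi_n(p)$ is a basis of that fibre. Hence $\tilde\xi_1,\dots,\tilde\xi_n$ freely generate $\mathcal T_{\tilde X}(-\log\lfloor\tilde D\rfloor)$ on $\tilde X=\pi^{-1}(U)$, and dually $\Omega_{\tilde X}^{[1]}(\log\lfloor\tilde D\rfloor)=\Omega_{\tilde X}^1(\log\lfloor\tilde D\rfloor)$ is free on $\pi^{-1}(U)$ as well.
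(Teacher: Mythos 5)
Your proposal is correct and follows essentially the same route as the paper: lift the dual frames of logarithmic vector fields (via Proposition~\ref{prop: log resolution}) and of logarithmic $1$-forms (via the Extension Theorem~\ref{thm: extension}) to $\tilde X$, observe that the pairing is $\delta_{ij}$ on the dense open set where $\pi$ is an isomorphism and hence everywhere, and conclude fibrewise that both lifted frames are bases. Your sandwich argument identifying $\pi_*\Omega^1_{\tilde X}(\log\lfloor\tilde D\rfloor)$ with $\Omega_X^{[1]}(\log\lfloor D\rfloor)$ is just a more explicit justification of the extension step that the paper cites directly from Theorem~\ref{thm: extension}.
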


\begin{proof}
 Since $\Omega_X^{[1]}(\log  \lfloor D\rfloor)$ and $\mathcal T_X(-\log  \lfloor D\rfloor)$ are dual to each other, one of them is (locally) free if and only if the other one is.
 Since the question is local, we may assume that $\Omega_X^{[1]}(\log  \lfloor D\rfloor)$ is free.
 
 Let $\sigma_1,\ldots,\sigma_n$ be logarithmic $1$-forms on $X$ spanning $\Omega_X^{[1]}(\log  \lfloor D\rfloor)$ and let $\xi_1,\ldots, \xi_n$ be logarithmic vector fields which span $\mathcal T_X(-\log  \lfloor D\rfloor)$
 and are dual to $\sigma_1,\ldots, \sigma_n$, i.e.\ $\sigma_i(\xi_j)=\delta_{ij}$.
 By Theorem~\ref{thm: extension}, the logarithmic $1$-forms $\sigma_1,\ldots,\sigma_n$ can be extended to logarithmic $1$-forms 
 $\tilde\sigma_1,\ldots, \tilde\sigma_n$ on $\tilde X$. Let $\tilde \xi_1,\ldots,\tilde \xi_n$ denote the lifts of $\xi_1,\ldots, \xi_n$ to $\tilde X$.
 Since $\pi $ is an isomorphism onto its image when restricted to $\pi^{-1}(X\setminus (X,D)_{\mathrm{sing}})$, we have 
 $\tilde \sigma_i(\tilde \xi_j)=\sigma_i(\xi_j)=\delta_{ij}$ on the open dense subset $\pi^{-1}(X\setminus (X,D)_{\mathrm{sing}})\cong  (X\setminus (X,D)_{\mathrm{sing}})$
of $\tilde X$ and thus on all of $\tilde X$.
 Consequently, the logarithmic $1$-forms $\tilde \sigma_1,\ldots, \tilde \sigma_n$ span $\Omega_{\tilde X}^{[1]}(\log\lfloor  \tilde D\rfloor)$, and 
$\tilde \xi_1,\ldots , \tilde \xi_n$ span $\mathcal T_{\tilde X}(-\log\lfloor \tilde  D\rfloor)$.
\end{proof}

\subsection{Residues of logarithmic 1-forms}

If $D$ is a smooth hypersurface in a complex manifold $X$, then the residue map for logarithmic $1$-forms with respect to $D$ gives an 
exact sequence 
$$0\rightarrow \Omega_X^1\rightarrow \Omega_X^1(\log D)\rightarrow 
\mathcal O_D\rightarrow 0.$$
This can directly be generalised to the case of an snc divisor $D$ in a complex manifold or smooth variety (and also logarithmic $p$-forms).
In general however, a residue sequence like this for logarithmic differential forms on an arbitrary pair does not exist.

If $(X,D)$ is a pair such that $X$ is smooth and $D$ is the sum of irreducible prime divisors, the residue of a logarithmic $p$-form can be defined as described in \cite[$\S 2$]{Saito}, but it is in general not holomorphic but meromorphic. 

\begin{prop}[{\cite[Section 1.1 and Lemma~2.2]{Saito}}]\label{prop: Saito residue}
Let $X$ be a complex manifold, $D$~a hypersurface in $X$ locally defined by 
the reduced equation $h(z)=0$ for a holomorphic function~$h$. 
If $\sigma$ is a logarithmic $1$-form on $X$, then locally there are
holomorphic functions~$g_1$,~$g_2$, and a holomorphic $1$-form $\eta$ such
that 
$$ g_1\sigma =g_2\frac{dh}{h}+\eta.$$

The functions $g_1$ and $g_2$ are in general not unique,
but the restriction to $D$ of their ratio~$\frac{g_2}{g_1}$ gives rise to a well-defined meromorphic function $\mathrm{res}(\sigma)$ on the normalisation $\tilde D$ of $D$.\qed
\end{prop}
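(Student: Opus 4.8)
The plan is to work locally around a point of $D$ and reduce everything to the one-variable situation, where the residue is a classical object. Fix a point $x \in D$ and choose a local reduced equation $h(z) = 0$ for $D$ near $x$; also fix a logarithmic $1$-form $\sigma$. The first step is to establish the \emph{existence} of the decomposition $g_1\sigma = g_2\,\frac{dh}{h} + \eta$. I would argue as follows: the form $h\sigma$ is a holomorphic $1$-form near $x$, and since $h$ defines a reduced (hence generically smooth) hypersurface, at a general smooth point $p$ of $D$ we may complete $h$ to a coordinate system $(h, w_2, \dots, w_n)$; writing $h\sigma = a_1\,dh + \sum_{j\ge 2} a_j\,dw_j$ with $a_i$ holomorphic, the condition that $\sigma$ has at worst a logarithmic pole along $D$ forces $a_j|_{D}$ to vanish for $j \ge 2$ (a first-order pole of $\sigma$ means $h \mid a_j$ near $p$). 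Hence $\sigma - \frac{a_1}{h}\,dh$ is holomorphic near $p$, giving the decomposition with $g_1 = 1$ near a general point. To globalize this over the whole neighborhood of $x$ (not just near smooth points of $D$), clear denominators: on the open set where $dh \ne 0$ one has a meromorphic function $q$ with $\sigma - q\,\frac{dh}{h}$ holomorphic, and multiplying by a suitable holomorphic $g_1$ that kills the poles of $q$ along the (codimension $\ge 2$) locus where the naive argument fails yields $g_1\sigma = g_2\,\frac{dh}{h} + \eta$ with all data holomorphic. Here I would invoke that $\eta := g_1\sigma - g_2\,\frac{dh}{h}$ is a priori meromorphic with poles only along $D$ but, having no pole in codimension one by construction, is holomorphic by normality (Hartogs / reflexivity of $\Omega^1_X$ on the smooth manifold $X$).

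The second step is well-definedness of $\mathrm{res}(\sigma)$ on the normalization $\tilde D$. Suppose $g_1\sigma = g_2\,\frac{dh}{h} + \eta$ and $g_1'\sigma = g_2'\,\frac{dh}{h} + \eta'$ are two such decompositions. Cross-multiplying, $g_1'g_2 \,\frac{dh}{h} + g_1'\eta = g_1 g_2'\,\frac{dh}{h} + g_1\eta'$, so $(g_1'g_2 - g_1g_2')\,\frac{dh}{h} = g_1\eta' - g_1'\eta$ is holomorphic; since $\frac{dh}{h}$ has a genuine pole along $D$, the holomorphic function $g_1'g_2 - g_1 g_2'$ must vanish on $D$, i.e.\ $h \mid (g_1'g_2 - g_1 g_2')$. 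Therefore $\frac{g_2}{g_1}$ and $\frac{g_2'}{g_1'}$ agree as meromorphic functions on $D$ wherever both are defined — i.e.\ off the zero locus of $g_1 g_1'$, which meets each component of $D$ in a proper closed subset (since $g_1\sigma$ has an honest expansion, $g_1$ does not vanish identically on any component where $\sigma \ne 0$ forces a nonzero residue). Pulling back under the normalization map $\nu : \tilde D \to D$, the pullbacks $\nu^*(g_2/g_1)$ patch to a single meromorphic function on $\tilde D$: on each component $\tilde D$ is normal, hence the pullback of a meromorphic function defined on a dense open set extends uniquely, and the agreement on overlaps just shown guarantees the local pieces glue. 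That the result is only meromorphic and not holomorphic is unavoidable: $g_1$ may vanish along proper subsets of $D$, producing poles of $g_2/g_1$ on $\tilde D$.

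The \textbf{main obstacle} I anticipate is the passage from a decomposition valid near a general smooth point of $D$ to one valid in a full neighborhood of an arbitrary (possibly singular, possibly a self-crossing) point of $D$, and simultaneously making sure the \emph{same} $g_1$ works. Concretely, the meromorphic function $q$ with $\sigma = q\,\frac{dh}{h} + (\text{hol.})$ near the smooth locus of $D$ a priori has indeterminacy along $D_{\mathrm{sing}}$, and one must produce a single holomorphic $g_1$, defined near $x$, that simultaneously clears these denominators and does not vanish on any whole component of $D$. The clean way to handle this is to observe that $h\sigma$ is holomorphic and the Jacobian ideal / the ideal generated by the partials of $h$ controls the obstruction, so one can take $g_1$ to be (a unit times) an appropriate partial derivative $\partial h/\partial z_k$ — along the component of $D$ on which this partial is not identically zero — and then run the well-definedness comparison above to see the choice does not matter. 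This is exactly the content of Saito's Lemma 2.2, so in the write-up I would either cite it directly or reproduce this Jacobian-ideal argument; everything else (reflexive extension across codimension two, uniqueness up to a factor dividing $h$, gluing over the normalization) is then formal.
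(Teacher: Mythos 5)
The paper offers no proof of this proposition --- it is quoted from \cite{Saito} with a \emph{qed} --- so the comparison is really with Saito's own argument, and your reconstruction follows his route. Your well-definedness computation (cross-multiply, observe that $(g_1'g_2-g_1g_2')\frac{dh}{h}$ holomorphic forces $h\mid(g_1'g_2-g_1g_2')$ because $\frac{dh}{h}$ has a genuine pole at the smooth points of $D$, which are dense since $h$ is reduced) is correct. For existence, the clean version is the one you gesture at in your last paragraph and should be carried out explicitly rather than via the intermediate ``clear the denominators of $q$'' step --- the meromorphy of $q$ across $D_{\mathrm{sing}}$ is precisely what is in question there. Namely: write $h\sigma=\sum_i a_i\,dz_i$; since $h\sigma$ and $h\,d\sigma$ are holomorphic, so is $dh\wedge\sigma$, i.e.\ $h\mid(\partial_i h\cdot a_k-\partial_k h\cdot a_i)$ for all $i,k$, and then
$$\frac{\partial h}{\partial z_k}\,\sigma \;=\; a_k\,\frac{dh}{h}\;+\;\frac{1}{h}\sum_i\Bigl(\frac{\partial h}{\partial z_k}a_i-\frac{\partial h}{\partial z_i}a_k\Bigr)dz_i$$
exhibits the decomposition with $g_1=\partial h/\partial z_k$, $g_2=a_k$ and $\eta$ holomorphic.

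The genuine gap is in the well-definedness step: you must \emph{assume}, and cannot derive from the displayed equation alone, that $g_1$ does not vanish identically on any irreducible component of $D$. Without this the claim is false: $g_1=h$, $g_2=0$, $\eta=h\sigma$ is always a valid decomposition (as $h\sigma$ is holomorphic) and yields ratio $0$, so ``the'' residue would not be well defined. Your identity $g_1'g_2\equiv g_1g_2'$ on $D$ carries no information on a component where $g_1$ vanishes identically, and the parenthetical reason you give for non-vanishing (``$g_1\sigma$ has an honest expansion \dots $\sigma\neq 0$ forces a nonzero residue'') is not an argument --- nothing in the equation prevents $h\mid g_1$. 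In Saito's formulation this non-vanishing is an explicit hypothesis on the decomposition, namely $\dim\bigl(h^{-1}(0)\cap g_1^{-1}(0)\bigr)\le n-2$, and it is satisfied by the choice $g_1=\partial h/\partial z_k$ (with $k$ chosen per component of $D$, possible because $h$ is reduced). Once that condition is built into what counts as an admissible pair $(g_1,g_2)$, your comparison argument and the descent to the normalisation close the proof.
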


This allows us to define a residue map as follows:
\begin{defi}
Let $X$ be a complex manifold, $D$ a reduced hypersurface in $X$, and let $\rho:\tilde D\rightarrow D$ denote the normalisation of $D$.
We define the residue map as 
$$\Omega_X^1(\log D)\rightarrow \rho_*(\mathcal M_{\tilde D}),
\ \ \ \sigma\mapsto \mathrm{res}(\sigma),$$
where $\mathcal M_{\tilde D}$ denotes the sheaf of meromorphic functions on $\tilde D$.
\end{defi}

\begin{rmk}
Similarly to the residue of a logarithmic $1$-form, the residue of logarithmic $p$-forms can be defined,
and for any logarithmic form $\sigma$ we have
$$\mathrm{res}(d\sigma)=d(\mathrm{res}(\sigma)).$$
\end{rmk}

\begin{rmk}
Recently, precise characterisations under which a reduced divisor $D$ in a complex manifold $X$ is normal crossing under the assumption that $\mathcal T_X(-\log D)$ is locally free were given in \cite{GrangerSchulze} and \cite{Faber}.
One of these equivalent characterisations is the regularity of the residue of logarithmic $1$-forms along the divisor $D$.
\end{rmk}

It turns out that also in the case of pairs $(X,D)$, where $X$ is allowed to be singular, this notion is useful.
We are always assuming that $X$ is normal and thus there is a closed subset $Z\subset X$ of codimension at least $2$ such 
$X\setminus Z$ is smooth and $D|_{X\setminus Z}$ is an snc divisor. Given any logarithmic $1$-form $\sigma$ on $X$, we may then define its residue by first restricting $\sigma$ to $X\setminus Z$, then taking the residue along $D|_{X\setminus Z}$
which then defines a unique rational function on the normalisation $\tilde D$ of $D$.

In general this residue will not be regular, nor does there exist a short exact residue sequence as in the case of snc pairs.
In the case of dlt pairs however, we have the following result for logarithmic $1$-forms:

\begin{thm}[Residue sequence for dlt pairs, {\cite[Theorem 11.7]{GKKP11}}]
Let $(X,D)$ be a dlt pair with $\lfloor D\rfloor \neq \emptyset$ and 
$D_0\subseteq \lfloor D\rfloor $ an irreducible component.
Then there is a sequence 
$$0\longrightarrow \Omega_X^{[1]}(\log(\lfloor D \rfloor - D_0))
\longrightarrow \Omega_X^{[1]}(\log  \lfloor D\rfloor  )
\overset{\mathrm{res}_{D_0}}\longrightarrow\mathcal O_{D_0}\longrightarrow 0,$$
which is exact on $X$ outside a subset of codimension at least $3$. 
Moreover this sequence coincides with the usual residue sequence where
the pair $(X,\lfloor D\rfloor)$ is an snc pair.\qed
\end{thm}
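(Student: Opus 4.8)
The plan is to reduce the assertion to the classical residue sequence on the largest open set where the pair is snc, and then to push that sequence forward to $X$; the two facts about dlt pairs that make this work are that such a pair is snc in codimension two and that the irreducible components of $\lfloor D\rfloor$ are normal. So I would begin by choosing, using the structure theory of dlt pairs (a standard fact going back to Szab\'o; see e.g.\ \cite{KollarMori}), a closed subset $Z\subseteq X$ of codimension $\geq 3$ such that $X^\circ:=X\setminus Z$ is smooth and $\lfloor D\rfloor|_{X^\circ}$ is an snc divisor with $D_0\cap X^\circ$ one of its smooth components. Note that $(X,\lfloor D\rfloor)_{\mathrm{sing}}$ has codimension $\geq 2$ in the normal variety $X$, and that $X^\circ$ is contained in the open set $U=X\setminus(X,\lfloor D\rfloor)_{\mathrm{sing}}$ over which $\Omega_X^{[1]}(\log\lfloor D\rfloor)$ is, by definition, the push-forward of the locally free sheaf $\Omega_U^1(\log\lfloor D\rfloor|_U)$. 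I would also record that $D_0$, being a component of the reduced boundary of the dlt pair $(X,D)$, is normal, hence $S_2$.

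On $X^\circ$ one has the usual residue sequence along the smooth hypersurface $D_0\cap X^\circ$ of the snc divisor $\lfloor D\rfloor|_{X^\circ}$,
$$0\longrightarrow \Omega_{X^\circ}^1\bigl(\log(\lfloor D\rfloor-D_0)\bigr)\longrightarrow \Omega_{X^\circ}^1(\log\lfloor D\rfloor)\overset{\mathrm{res}}{\longrightarrow}\mathcal O_{D_0\cap X^\circ}\longrightarrow 0,$$
which is exact. Applying $\iota_*$ for $\iota:X^\circ\hookrightarrow X$ gives a left-exact sequence on $X$, and it remains to identify its three terms. Because $X\setminus X^\circ\supseteq(X,\lfloor D\rfloor)_{\mathrm{sing}}$ has codimension $\geq 2$ and the sheaves $\Omega_X^{[1]}(\log\lfloor D\rfloor)$, $\Omega_X^{[1]}(\log(\lfloor D\rfloor-D_0))$ are reflexive, they agree with the push-forwards from $X^\circ$ of their restrictions there, and those restrictions are precisely the Kähler logarithmic sheaves appearing above; this identifies the first two terms with $\Omega_X^{[1]}(\log(\lfloor D\rfloor-D_0))$ and $\Omega_X^{[1]}(\log\lfloor D\rfloor)$. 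For the last term, $D_0\setminus X^\circ$ has codimension $\geq 2$ in $D_0$, so $S_2$-ness of $D_0$ yields $\iota_*\mathcal O_{D_0\cap X^\circ}=\mathcal O_{D_0}$; one then \emph{defines} $\mathrm{res}_{D_0}$ to be the resulting morphism $\Omega_X^{[1]}(\log\lfloor D\rfloor)\to\mathcal O_{D_0}$. The sequence so obtained is exact on all of $X$ except possibly at $\mathcal O_{D_0}$, while over $X^\circ$ it restricts to the classical exact residue sequence; hence it is exact outside $Z$, which has codimension $\geq 3$. Finally, when $(X,\lfloor D\rfloor)$ is itself snc one may take $X^\circ=X$, so this is literally the usual residue sequence, as required.

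I expect the main obstacle to be not the sheaf-theoretic bookkeeping — which is routine once the codimension estimates are in place — but the two inputs from the theory of dlt pairs: snc-ness in codimension two (the genuinely non-trivial point, resting on the analysis of dlt singularities via resolutions) and normality of the components of $\lfloor D\rfloor$; both are standard and would be quoted rather than reproved. The one step to carry out with care is verifying that $X^\circ\subseteq U$ and that $X\setminus X^\circ$ has codimension $\geq 2$ in $X$, since otherwise the identifications of the first two terms with the reflexive logarithmic sheaves on $X$ would be valid only in codimension one rather than on all of $X$.
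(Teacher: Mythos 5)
First, a remark on the comparison itself: the paper does not prove this statement --- it is quoted from \cite[Theorem~11.7]{GKKP11} and marked with a \verb|\qed| --- so there is no internal proof to measure you against. Your reconstruction follows the same strategy as the proof in the cited source: define $\mathrm{res}_{D_0}$ by pushing the classical snc residue sequence forward from a big open subset, identify the first two terms via reflexivity, and identify the third term via normality (hence $S_2$-ness) of $D_0$. That architecture is the right one.

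There is, however, one genuinely false step. You claim, as a ``standard fact going back to Szab\'o,'' that for a dlt pair there is a closed subset $Z$ of codimension $\geq 3$ such that $X^{\circ}=X\setminus Z$ is \emph{smooth} and $\lfloor D\rfloor|_{X^{\circ}}$ is snc. This is not true: a dlt pair may be singular in codimension $2$ away from the reduced boundary. For instance, take $X=\bigl(\mathbb A^2/\{\pm1\}\bigr)\times\mathbb A^1$ and $D=D_0$ a smooth divisor disjoint from the singular curve of $X$; this pair is dlt with $\lfloor D\rfloor\neq\emptyset$, yet $\mathrm{Sing}(X)$ has codimension $2$, so no such $Z$ exists. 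What the structure theory actually provides (see \cite[Section~9]{GKKP11}, and Szab\'o's theorem for the snc locus) is snc-ness in codimension $2$ \emph{along the boundary}: there is $Z$ with $\mathrm{codim}_X Z\geq 3$ such that $(X\setminus Z,\lfloor D\rfloor|_{X\setminus Z})$ is snc at every point of $\lfloor D\rfloor\setminus Z$. This weaker input suffices, but you must then add the observation that away from $\lfloor D\rfloor$ the asserted sequence is trivially exact --- there the first arrow is an isomorphism (both sheaves restrict to $\Omega_X^{[1]}$, resp.\ to $\Omega_X^{[1]}(\log(\lfloor D\rfloor-D_0))$ near components other than $D_0$) and the third term vanishes --- so all content is concentrated near $D_0$, where your pushforward argument applies verbatim. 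Correspondingly, the first two terms should be obtained by pushing forward the reflexive logarithmic sheaves from the snc locus of the pair (whose complement always has codimension $\geq 2$), not K\"ahler differentials from an allegedly smooth $X^{\circ}$. With these corrections your argument is complete and coincides with the proof in \cite{GKKP11}; the inputs you correctly identified --- normality of the components of $\lfloor D\rfloor$ for dlt pairs (\cite[Proposition~5.51]{KollarMori}) and the codimension estimates --- carry the rest of the proof.
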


\begin{rmk}\label{rmk: dlt components}
If $(X,D)$ is a dlt pair, $p\in  D$, then by definition either the pair $(X,D)$ is snc near $p$, or there is an open neighbourhood $U\subseteq X$ of $p$ such that $(U, D|_U)$ is plt. 
Thus if $(X,D)$ is not locally snc at $p\in D$, 
we know by \cite[Proposition 5.51]{KollarMori} that $\lfloor D\rfloor $ is normal when restricted to $U$ and the disjoint union of its irreducible components. In particular, $p$ is contained in only one irreducible component of $\lfloor D\rfloor $.
\end{rmk}

In the case of lc pairs the extension of logarithmic forms to resolutions also yields residues for logarithmic $1$-forms:

\begin{rmk}\label{rmk: residues for lc pairs}
Let $(X,D)$ be an lc pair. Since logarithmic $1$-forms extend to
logarithmic $1$-forms on a log resolution of $(X,D)$ by \cite[Theorem 1.5]{GKKP11}, the residue of a logarithmic $1$-form along a component of $\lfloor D\rfloor $ is a regular function on the normalisation of that component of  $\lfloor D\rfloor $.
Moreover, we have an exact sequence
$$0\rightarrow \Omega_X^{[1]}\rightarrow 
\Omega_X^{[1]}(\log\lfloor D\rfloor )\rightarrow 
\bigoplus_{j=1}^{k} (\rho_j)_*(\mathcal O_{\tilde D_j}),$$
where $D_1,\ldots, D_k$ denote the irreducible components of 
the rounddown $\lfloor D\rfloor$ and $\rho_j:\tilde D_j\rightarrow D_j$ is the normalisation of $D_j$.
Note however that the last arrow of this sequence is in general not surjective.
\end{rmk}

\begin{rmk}\label{rmk: residues of closed forms}
Let $(X,D)$ be an arbitrary pair, and $\sigma$  a logarithmic $1$-form on $X$ that  is closed. Since $d(\mathrm{res}_{D_j}(\sigma))=\mathrm{res}_{D_j}(d\sigma)=0$ along each irreducible component $D_j$ of $\lfloor D\rfloor$, the residue $\mathrm{res}(\sigma)$ is constant on each irreducible component~$D_j$.
\end{rmk}

\section{Dlt pairs with locally free sheaf of logarithmic $1$-forms}\label{section: dlt}
If $(X,D)$ is a dlt pair and its sheaf of logarithmic differential $1$-forms is locally free, then $(X,D)$ is necessarily snc:

\begin{thm}\label{thm: dlt pairs}
Let $(X,D)$ be a dlt pair such that 
 $\Omega_X^{[1]}(\log  \lfloor D\rfloor)$ is locally free.
Then $X$ is smooth and $ \lfloor D\rfloor $ is an snc divisor.
\end{thm}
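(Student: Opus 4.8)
The plan is to work locally near a point $p \in X$ where the restriction of $\Omega_X^{[1]}(\log \lfloor D\rfloor)$ is free, and to show that $(X, \lfloor D\rfloor)$ is snc at $p$. If $(X,D)$ is already snc near $p$ there is nothing to do, so assume it is not. By Remark~\ref{rmk: dlt components}, after shrinking to a neighbourhood $U$ of $p$, the pair $(U, D|_U)$ is plt, the rounddown $\lfloor D\rfloor|_U$ is normal and equals a single irreducible component $D_0$ (in particular $p$ lies on at most one component of $\lfloor D\rfloor$). Now I would pass to the functorial log resolution $\pi : \tilde X \to X$, writing $\tilde D = E + \overline{D}$ with $E = \mathrm{Exc}(\pi)$. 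By Corollary~\ref{cor: locally free}, the sheaf $\Omega_{\tilde X}^{[1]}(\log \lfloor \tilde D\rfloor)$ is free on $\pi^{-1}(U)$.

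The heart of the argument is the residue sequence for dlt pairs (Theorem~11.7 of \cite{GKKP11}, quoted above): for the component $D_0 \subseteq \lfloor D\rfloor$ there is a sequence
$$0 \longrightarrow \Omega_X^{[1]}(\log(\lfloor D\rfloor - D_0)) \longrightarrow \Omega_X^{[1]}(\log \lfloor D\rfloor) \overset{\mathrm{res}_{D_0}}{\longrightarrow} \mathcal O_{D_0} \longrightarrow 0$$
which is exact outside a set of codimension $\geq 3$. Since in our local situation $\lfloor D\rfloor - D_0 = 0$, this reads $0 \to \Omega_X^{[1]} \to \Omega_X^{[1]}(\log \lfloor D\rfloor) \to \mathcal O_{D_0} \to 0$ (off codimension $3$). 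The next step is to choose, among a local frame $\sigma_1,\dots,\sigma_n$ of $\Omega_X^{[1]}(\log \lfloor D\rfloor)$ near $p$, combinations so that exactly one of them, say $\sigma_1$, has $\mathrm{res}_{D_0}(\sigma_1) = 1$ (a unit) and the others lie in the subsheaf $\Omega_X^{[1]}$; this is possible because $\mathrm{res}_{D_0}$ is surjective onto $\mathcal O_{D_0}$ off codimension $3$, hence surjective on stalks at $p$ as both sheaves are reflexive / the cokernel is supported in codimension $\geq 3$. Dually one gets a local frame $\xi_1,\dots,\xi_n$ of $\mathcal T_X(-\log \lfloor D\rfloor)$ with $\sigma_i(\xi_j) = \delta_{ij}$; the vector field $\xi_1$ is then transverse to $D_0$ in the sense that its flow moves points off $D_0$, while $\xi_2,\dots,\xi_n$ are tangent to $D_0$. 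Restricting $\sigma_2,\dots,\sigma_n$ to $D_0$ (which makes sense as they are honest reflexive forms, and $D_0$ is normal) should exhibit $\Omega_{D_0}^{[1]}$ as locally free of rank $n-1$: more precisely, using the conormal exact sequence for the Cartier-up-to-codimension-$2$ divisor $D_0$ in the smooth-in-codimension-$1$ variety $X$, together with the splitting coming from $\xi_1$, one identifies $\Omega_{D_0}^{[1]}$ with the quotient, which is free. By the classical Lipman–Zariski theorem (the $D = 0$ case of the conjecture, known for quotient singularities / normal surfaces and — crucially — available here because $D_0$ is itself the boundary of a plt pair hence klt, so one may instead induct or invoke the known low-dimensional / klt cases), $D_0$ is smooth.

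Once $D_0$ is smooth, I would run an induction on $\dim X$: the vector field $\xi_1$ with $\sigma_1(\xi_1) = 1$ generates a local $\mathbb C$-action (Theorem~\ref{thm: existence of flows}) whose flow gives, near $p$, a product decomposition $X \cong D_0 \times \Delta$ with $D_0 = \{0\} \times$(something) — this is the standard "straightening of a transverse vector field" argument: the flow map $\varphi$ restricted to $D_0 \times \Delta^1$ is a local isomorphism onto a neighbourhood of $p$ because its differential along $\{0\}\times D_0$ is invertible, using $\sigma_1(\xi_1)=1$ and $\sigma_1|_{D_0} = 0$. Under this isomorphism $\lfloor D\rfloor$ becomes $D_0 \times \Delta$, and $\Omega_X^{[1]}(\log \lfloor D\rfloor)$ becomes $\mathrm{pr}_1^* \Omega_{D_0}^{[1]} \oplus \mathcal O \cdot \frac{dt}{t}$, so $X$ is smooth near $p$ (product of the smooth $D_0$ with a disc) and $\lfloor D\rfloor$ is a smooth divisor there; if $p$ lay on several components we would instead get snc, but Remark~\ref{rmk: dlt components} already ruled that out in the non-snc case. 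This finishes the local analysis, and since the hypothesis is local, $X$ is smooth everywhere and $\lfloor D\rfloor$ is snc.

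The main obstacle I expect is making rigorous the step "$\Omega_{D_0}^{[1]}$ is locally free, hence $D_0$ is smooth." The residue sequence is only exact off codimension $\geq 3$, so one must be careful that the restriction-to-$D_0$ map really produces a frame of $\Omega_{D_0}^{[1]}$ and not merely of its restriction to an open subset with complement of codimension $\geq 2$ in $D_0$ — this should be fine because both $\Omega_{D_0}^{[1]}$ and the candidate free sheaf are reflexive on the normal variety $D_0$ and agree in codimension $1$, but the argument needs the conormal/adjunction sequence for $D_0 \subset X$ in reflexive form, valid since $X$ is smooth in codimension $1$ and $D_0$ is normal. The subsequent appeal to Lipman–Zariski for $D_0$ is legitimate because $D_0$, being the support of the boundary of a plt pair, has klt (in fact canonical, by adjunction/inversion of adjunction) singularities and one dimension less, so an inductive hypothesis or the established cases apply; alternatively the whole theorem can be set up as an induction on dimension with this as the inductive input.
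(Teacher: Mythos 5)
Your opening moves coincide with the paper's: reduce via Remark~\ref{rmk: dlt components} to the case of a single normal component $D_0$ through $p$, and normalise a local frame so that $\mathrm{res}_{D_0}(\sigma_1)=1$ while $\sigma_2,\dots,\sigma_n$ are regular. Even here, though, your justification for finding a form whose residue is a \emph{unit at $p$} is insufficient: the image of $\mathrm{res}_{D_0}$ is a subsheaf of $\mathcal O_{D_0}$ agreeing with $\mathcal O_{D_0}$ only off a set of codimension $\geq 2$ in $D_0$, and such a subsheaf (e.g.\ an ideal of a codimension-$2$ subset) need not contain a unit at $p$. The paper obtains the unit by passing to the log resolution, where the snc frame containing $\frac{dz_1}{z_1}$ forces some extended $\tilde\sigma_j$ to have nonvanishing residue at a point of $\pi^{-1}(p)\cap\overline{D}$, and then descending using normality of $D_0$.

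The decisive gap is in your concluding step. The vector field $\xi_1$ dual to $\sigma_1$ is \emph{logarithmic}, so its flow stabilises $D_0$; in the snc model $\sigma_1=\frac{dz_1}{z_1}$ and $\xi_1=z_1\frac{\partial}{\partial z_1}$, which \emph{vanishes along} $D_0$ rather than being transverse to it. Consequently the map $(x,t)\mapsto\varphi_t(x)$ on $D_0\times\Delta$ has image inside $D_0$ and its differential in the $t$-direction is zero along $D_0$: there is no product decomposition $X\cong D_0\times\Delta$, and the claimed local isomorphism fails. This is not a repairable detail, because even granting that $D_0$ is smooth, smoothness of $X$ cannot follow from such soft considerations --- the lc/toric examples of Section~\ref{section: locally free with closed forms} have smooth or normal boundary components while $X$ itself is singular, so the dlt hypothesis must enter in an essential way. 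The paper uses it by an entirely different mechanism: assuming $(X,D)$ is not snc at $p$, some exceptional component $E_l$ of the log resolution meets $\overline{D}$; choosing a local logarithmic form $\eta$ with $\mathrm{res}_{E_l}(\eta)=1$ and $\mathrm{res}_{\overline{D}}(\eta)=0$ and writing $\eta=\sum_j\alpha_j\tilde\sigma_j$ in the extended frame, the residue along $\overline{D}$ forces $\alpha_1|_{\overline{D}}=0$ while the residue along $E_l$ equals $\alpha_1|_{E_l}$, giving a contradiction at a point of $E_l\cap\overline{D}$. Your proposal never produces such a contradiction and therefore does not prove the theorem; the intermediate claim that $\Omega_{D_0}^{[1]}$ is locally free and hence $D_0$ smooth also needs the adjunction to $(D_0,\mathrm{Diff})$ and the klt Lipman--Zariski theorem spelled out, but that part is at least plausibly fixable.
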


\begin{proof}
After shrinking~$X$, we may assume that $\Omega_X^ {[1]}(\log 
 \lfloor D\rfloor)$ is free.
If $p\notin  \lfloor D\rfloor $, then the pair $(X,D)$ is klt near~$p$ and 
$\Omega_X^ {[1]}(\log 
 \lfloor D\rfloor )\cong \Omega_X^{[1]}$ near~$p$.
Since the Lipman-Zariski conjecture is true for klt spaces by  \cite[Theorem~6.1]{GKKP11}, $X$~is smooth near~$p$.

Assume now $p\in  \lfloor D\rfloor $
and let $\pi:\tilde X\rightarrow X$ be the functorial log resolution with exceptional divisor~$E$, $\overline{D}$ the strict transform of $D$ and $\tilde D=E+\overline{D}$.  Then 
$\Omega_{\tilde X}^ {[1]}(\log\lfloor  \tilde D\rfloor )$ is free
by Corollary~\ref{cor: locally free}.
Suppose that the pair $(X,D)$ is not snc at $p$. Then by Remark~\ref{rmk: dlt components} the point $p$ is only contained in one irreducible component of $ \lfloor D\rfloor $, and after possibly further shrinking
we may thus assume that $ \lfloor D\rfloor$ is irreducible.
Let $\overline{ D }$ denote again the strict transform of $ D$, and $E_1,\ldots , E_m$ the exceptional divisors,
$\tilde D =\overline{D }+E_1+\ldots +
E_m$.
Let $\sigma_1,\ldots, \sigma_n$ be logarithmic $1$-forms spanning $\Omega_X^ {[1]}(\log 
\lfloor D\rfloor)$, and let $\tilde\sigma_1,\ldots , \tilde \sigma_n$
denote the extensions of these to $\tilde X$ (cf.\ Theorem~\ref{thm: extension}), which span $\Omega_{\tilde X}^ {[1]}(\log \lfloor \tilde D\rfloor)$.
Let $q_0\in \pi^{-1}(p)\cap \lfloor \overline{D}\rfloor$.
Since $\tilde D$ is snc, there is ~$j$  such that the residue $\mathrm{res}_{\lfloor\overline{D}\rfloor}(\tilde \sigma_j)$ along $\lfloor\overline{D}\rfloor$ does not vanish in $q_0$
and hence $\mathrm{res}_{\lfloor D\rfloor }(\sigma_j)(p)\neq 0$.
Each component of $\lfloor D\rfloor$ is normal since $(X,D)$ is dlt and we may thus assume that $\mathrm{res}_{\lfloor{D}\rfloor}(\sigma_1)=1$ and $\mathrm{res}_{\lfloor{D}\rfloor}(\sigma_i)=0$ for $i>1$.
Therefore, $\sigma_2,\ldots,\sigma_n$ are regular $1$-forms without poles. By  \cite[Theorem~3.1]{GrafKovacs} the extensions $\tilde\sigma_2,\ldots, \tilde\sigma_n$ of $\sigma_2,\ldots ,\sigma_n$ to $\tilde X$ are also regular, and have in particular no poles along the exceptional divisors $E_1,\ldots,E_m$, hence $\mathrm{res}_{E_i}(\tilde\sigma_j)=0$ for $j>1$ and all $i$.
Choose $l\in\{1,\ldots, k\}$ such that $E_l$ intersects $\lfloor\overline{D}\rfloor$, which exists since we supposed that
$(X,D )$ is not snc at $p\in \lfloor D \rfloor$.
Let $q\in E_l\cap \lfloor \overline{D}\rfloor$ and $\eta $ be a logarithmic $1$-form on a neighbourhood of~$q$ such $\mathrm{res}_{E_l}(\eta)=1$ and 
$\mathrm{res}_{\lfloor\overline{ D}\rfloor}(\eta)=0$.
By Corollary~\ref{cor: locally free} we have $\eta=\alpha_1\tilde\sigma_1
+\ldots +\alpha_n\tilde \sigma_n$ for some regular functions $\alpha_j$.
This implies $$0=\mathrm{res}_{\lfloor \overline{D}\rfloor}(\eta)
=\alpha_1|_{\lfloor \overline{D}\rfloor }\mathrm{res}_{\lfloor \overline{D}\rfloor}(\tilde \sigma_1)+\ldots +\alpha_n|_{\lfloor \overline{D}\rfloor}\mathrm{res}_{\lfloor \overline{D}\rfloor}(\tilde\sigma_n)=\alpha_1|_{\lfloor \overline{D}\rfloor},$$
and in particular $\alpha_1(q)=0$.
But then we also have 
$$\mathrm{res}_{E_l}(\eta)=\alpha_1|_{E_l}\mathrm{res}_{E_l}(\tilde \sigma_1)+\ldots +\alpha_n|_{E_l}\mathrm{res}_{E_l}(\tilde\sigma_n)
=\alpha_1|_{E_l}$$
and in particular $\mathrm{res}_{E_l}(\eta)(q)=\alpha_1(q)
\mathrm{res}_{E_l}(\tilde \sigma_1)(q)=0$, which is a contradiction to
${\mathrm{res}_{E_l}(\eta)=1}$.
\end{proof}

\section{Pairs with locally free sheaf of logarithmic $1$-forms  generated by closed forms.}\label{section: locally free with closed forms}
If we allow slightly more general singularities for the pair $(X,D)$ than dlt singularities, e.g.\ if $(X,D)$ is lc, then the statement of Theorem~\ref{thm: dlt pairs} is no longer true.
Even if the sheaf of logarithmic $1$-forms is locally free, $X$ could have singularities or the irreducible components of 
$\lfloor D\rfloor $ could be non-normal:

\begin{ex}\label{ex: nodal curve}
Let $X=\mathbb A^2$ and $D=\{y^2-x^3-x^2=0\}$ be the nodal curve, which is not normal. The pair $(X,D)$ is lc and its sheaf of logarithmic $1$-forms is locally free.
\end{ex}

\begin{ex}\label{ex: cusp}
 Let $X=\mathbb A^2$ and $D=\{y^2-x^3=0\}$ be the cusp. In this case the pair $(X,D)$ is not lc, $D$ is not normal, but the sheaf of logarithmic $1$-forms is locally free.
\end{ex}

\begin{ex}\label{ex: toric variety}
Let $X$ be a normal toric variety. Let $T\subseteq X$ denote the open orbit of the $(\mathbb C^*)^n$-action and 
set $D=X\setminus T$. Then the pair $(X,D)$ is lc by 
\cite[Proposition~3.7]{KollarSingOfPairs}.

Moreover, the sheaves of logarithmic vector fields on $(X,D)$ and logarithmic $1$-forms can be described rather explicitly (see e.g.\ 
\cite[$\S$ 3.1]{OdaToricGeometry}), and in particular
these are free sheaves.
\end{ex}

In this section, we consider the case of a pair $(X,D)$ whose 
sheaf $\Omega_X^{[1]}(\log \lfloor D\rfloor )$ of logarithmic $1$-forms is locally free and locally generated by closed $1$-forms.
 
The main result (see  Theorem~\ref{thm: closed one forms}) is that pairs consisting of a toric variety and boundary divisor as in Example~\ref{ex: toric variety} describe the local structure of all such pairs,
i.e.\ if $(X,D)$ is a pair whose sheaf $\Omega_X^{[1]}(\log \lfloor D\rfloor )$ of logarithmic $1$-forms is locally free and locally generated by closed $1$-forms, then $(X,\lfloor D\rfloor)$ is toroidal.

\begin{rmk}\label{rmk: X smooth outside D}
Let $X$ be a normal complex space.
Then any closed $1$-form $\sigma $ on the smooth locus of $X$ extends to any resolution of singularities of $X$ by \cite[Theorem 1.2]{JoerderWLZ}.
As a consequence the Lipman-Zariski conjecture holds for normal complex spaces $X$ whose sheaf $\Omega_X^{[1]}$ is locally free and locally generated by closed $1$-forms, see \cite[Theorem 1.1]{JoerderWLZ}.

For the case of a pair $(X,D)$ whose 
sheaf $\Omega_X^{[1]}(\log \lfloor D\rfloor )$ of logarithmic $1$-forms is locally free and locally generated by closed $1$-forms
this directly implies that $X\setminus \lfloor D\rfloor $ is smooth.
\end{rmk}

Next, we show that the requirement to locally have a basis for 
$\Omega_X^{[1]}(\log \lfloor D\rfloor )$ consisting of closed forms and the requirement that $\Omega_X^{[1]}(\log \lfloor D\rfloor )$ is locally free and locally generated by closed forms are equivalent.

\begin{lemma}
Let $(X,D)$ be a pair such that $\Omega_X^{[1]}(\log \lfloor D\rfloor )$ is locally free and locally generated by closed $1$-forms.
Then locally there exists a basis of closed $1$-forms $\sigma_1,\ldots ,\sigma_n$ spanning $\Omega_X^{[1]}(\log \lfloor D\rfloor )$.
\end{lemma}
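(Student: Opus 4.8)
The statement is local, so after shrinking $X$ we may assume that $\Omega_X^{[1]}(\log\lfloor D\rfloor)$ is free of rank $n=\dim X$ with basis $\tau_1,\ldots,\tau_n$, and that there exist \emph{finitely many} closed logarithmic $1$-forms $\omega_1,\ldots,\omega_m$ generating the sheaf (finitely many suffices after further shrinking, since the sheaf is coherent). Writing each $\tau_i$ in terms of the $\omega_j$ shows that the $\omega_j$ already generate; the task is to extract $n$ of them — or rather $n$ $\mathcal O_X$-linear combinations of them with \emph{constant} coefficients — forming a basis. The plan is to do this at a fixed point $p$ and then spread out.

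First I would work at the level of stalks at $p$ and reduce to linear algebra over the field $\kappa(p)=\mathbb C$. Let $\mathfrak m\subset\mathcal O_{X,p}$ be the maximal ideal and $F=\Omega_X^{[1]}(\log\lfloor D\rfloor)_p$, a free $\mathcal O_{X,p}$-module of rank $n$. The images $\bar\omega_1,\ldots,\bar\omega_m$ in the $n$-dimensional $\mathbb C$-vector space $F/\mathfrak m F$ span it (since the $\omega_j$ generate $F$), so one can choose indices, say $1,\ldots,n$ after relabeling, with $\bar\omega_1,\ldots,\bar\omega_n$ a $\mathbb C$-basis of $F/\mathfrak m F$. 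By Nakayama's lemma, $\omega_1,\ldots,\omega_n$ then generate $F$, and because $F$ is free of rank $n$ they are in fact a basis of $F$. Since the property of being an $\mathcal O_X$-basis is an open condition (the determinant of the transition matrix to $\tau_1,\ldots,\tau_n$ is a unit at $p$, hence a unit on a neighbourhood), $\omega_1,\ldots,\omega_n$ form a basis of $\Omega_X^{[1]}(\log\lfloor D\rfloor)$ on some open neighbourhood of $p$. Finally each $\omega_i$ is closed by construction, so this is the desired basis of closed $1$-forms.

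The only genuine subtlety — and the step I would be most careful about — is the passage from ``locally generated by closed forms'' (an a priori infinite-generation statement, as closedness is not a coherent condition) to a \emph{finite} closed generating set near $p$. This is handled by the standard remark that $\Omega_X^{[1]}(\log\lfloor D\rfloor)$ is coherent, so on a small enough neighbourhood it is generated by finitely many sections; each of these is an $\mathcal O_X$-linear combination of the (possibly infinitely many) closed generators, but since the combination is finite, only finitely many closed generators actually occur, and that finite collection still generates on a (possibly smaller) neighbourhood of $p$. After that the argument is pure Nakayama plus the openness of the basis condition, with no further obstacle.
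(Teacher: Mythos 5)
Your argument is correct and follows essentially the same route as the paper: both extract $n$ of the closed generators whose images span the fibre at a point and conclude they form a local basis, the paper via the explicit matrix identity $BA=\mathrm{id}$ and you via Nakayama plus openness of the basis condition. Your extra remark on reducing to finitely many closed generators via coherence is a careful touch the paper leaves implicit, but it does not change the substance of the proof.
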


\begin{proof}
After possibly shrinking $X$, let $\sigma_1,\ldots,\sigma_n$ be a basis of logarithmic $1$-forms, and let $\tau_1,\ldots,\tau_m$ be closed $1$-forms which generate $\Omega_X^{[1]}(\log \lfloor D\rfloor )$.
Since $\sigma_1,\ldots, \sigma_n$ form a basis, there is 
an $m\times n$-matrix $A$ whose entries $a_{ij}$ are regular functions on $X$ and such that 
$$\left(\begin{smallmatrix}\tau_1\\ \vdots\\ \tau_m\end{smallmatrix}\right)=A\left(\begin{smallmatrix}\sigma_1\\ \vdots\\ \sigma_n\end{smallmatrix}\right).$$
Similarly, since $\tau_1,\ldots,\tau_m$ generate $\Omega_X^{[1]}(\log \lfloor D\rfloor )$, there is an $n\times m$-matrix $B$ whose entries are regular functions and such that
$$\left(\begin{smallmatrix}\sigma_1\\ \vdots\\ \sigma_n\end{smallmatrix}\right)=B\left(\begin{smallmatrix}\tau_1\\ \vdots\\ \tau_m\end{smallmatrix}\right).$$
Combining the above, we get
$$\left(\begin{smallmatrix}\sigma_1\\ \vdots\\ \sigma_n\end{smallmatrix}\right)=B\left(\begin{smallmatrix}\tau_1\\ \vdots\\ \tau_m\end{smallmatrix}\right)
=BA\left(\begin{smallmatrix}\sigma_1\\ \vdots\\ \sigma_n\end{smallmatrix}\right)$$
and because $\sigma_1,\ldots, \sigma_n$ form a basis we get
$BA=\mathrm{id}$. In particular, the matrix $B$ has rank $n$ at each point, and (after possibly reordering) $\tau_1,\ldots,\tau_n$ form a local basis for $\Omega_X^{[1]}(\log \lfloor D\rfloor )$.
\end{proof}

\begin{lemma}\label{lemma: closed equals commuting}
Let $(X,D)$ be a pair such that $\Omega_X^{[1]}(\log \lfloor D\rfloor )$ is locally free. Let $\sigma_1,\ldots,\sigma_n$ be a local basis of the logarithmic $1$-forms and let $\xi_1,\ldots,\xi_n$ be a dual local basis  of logarithmic vector fields for $\mathcal T_X(-\log \lfloor D\rfloor )$. 
Then the $1$-forms $\sigma_1,\ldots \sigma_n$ are closed if and only if $\xi_1,\ldots,\xi_n$ pairwise commute, i.e.\ $[\xi_i,\xi_j]=0$ for all $i,j$.
\end{lemma}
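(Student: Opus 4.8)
The statement is a local, linear-algebra-flavored fact relating closedness of a frame of $1$-forms to commutativity of the dual frame of vector fields, and the natural tool is the Cartan formula
$$
d\sigma(\xi,\eta)=\xi\bigl(\sigma(\eta)\bigr)-\eta\bigl(\sigma(\xi)\bigr)-\sigma\bigl([\xi,\eta]\bigr),
$$
valid for any $1$-form $\sigma$ and vector fields $\xi,\eta$. Since all the relevant sheaves are reflexive and the identity can be checked on the open dense smooth locus $X\setminus(X,\lfloor D\rfloor)_{\mathrm{sing}}$ where $(X,\lfloor D\rfloor)$ is snc, I may freely work there and then conclude by reflexivity; I will make this reduction explicit at the start of the proof. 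The plan is to apply Cartan's formula to $\sigma=\sigma_k$ and $\xi=\xi_i$, $\eta=\xi_j$ and exploit the duality relations $\sigma_k(\xi_l)=\delta_{kl}$.

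First I would observe that $\sigma_k(\xi_i)$ and $\sigma_k(\xi_j)$ are constant functions (equal to $\delta_{ki}$ and $\delta_{kj}$), so the first two terms in Cartan's formula vanish identically. Hence for every $i,j,k$ we get
$$
d\sigma_k(\xi_i,\xi_j)=-\sigma_k\bigl([\xi_i,\xi_j]\bigr).
$$
Now, since $\xi_1,\dots,\xi_n$ is a frame for the (locally free) logarithmic tangent sheaf, the values $(\xi_i,\xi_j)$ span, so a logarithmic $2$-form is zero iff it is killed by all such pairs; and since $\sigma_1,\dots,\sigma_n$ is a dual frame, the bracket $[\xi_i,\xi_j]$ — which is again a logarithmic vector field, because the logarithmic tangent sheaf is closed under Lie bracket — is zero iff $\sigma_k([\xi_i,\xi_j])=0$ for all $k$. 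Combining these two remarks with the displayed identity:
$$
\text{all }\sigma_k\text{ closed}
\iff d\sigma_k(\xi_i,\xi_j)=0\ \forall i,j,k
\iff \sigma_k([\xi_i,\xi_j])=0\ \forall i,j,k
\iff [\xi_i,\xi_j]=0\ \forall i,j.
$$

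There is no serious obstacle here; the only points requiring a word of care are the bookkeeping ones. I would spell out that $d\sigma_k$, a priori a reflexive logarithmic $2$-form on the singular pair, is determined by its restriction to the snc locus, so that "$\sigma_k$ closed" may indeed be tested there via Cartan's formula; and that $[\xi_i,\xi_j]$ is a section of $\mathcal T_X(-\log\lfloor D\rfloor)$ — this follows since logarithmic vector fields are exactly those whose flows stabilize $\lfloor D\rfloor$ as a set, and the bracket of two such is again of this type (equivalently, on the snc locus the bracket visibly has logarithmic coefficients, and it extends by reflexivity). Granting these, the equivalence is the purely formal chain displayed above, so the proof is short.
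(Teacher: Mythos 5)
Your proof is correct and follows essentially the same route as the paper: apply the Cartan formula to the dual frames, use $\sigma_k(\xi_l)=\delta_{kl}$ to reduce to $d\sigma_k(\xi_i,\xi_j)=-\sigma_k([\xi_i,\xi_j])$ on the snc locus, and conclude by the frame property together with reflexivity/continuity. Your added remarks (that the logarithmic tangent sheaf is closed under Lie bracket and that closedness can be tested on the dense snc locus) are points the paper leaves implicit, but the argument is the same.
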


\begin{proof}
On the smooth locus of any variety we have 
$$d\sigma(\xi,\xi')
=\xi(\sigma(\xi'))-\xi'(\sigma(\xi))-\sigma([\xi,\xi'])$$ for any regular $1$-form $\sigma$ and arbitrary vector fields $\xi,\xi'$.
Therefore, we get
\begin{align*}
d\sigma_i(\xi_j,\xi_k)&=\xi_j(\sigma_i(\xi_k))
-\xi_k(\sigma_i(\xi_j))-\sigma_i([\xi_j,\xi_k])\\
&=\xi_j(\delta_{ik})-\xi_k(\delta_{ij})-\sigma_i([\xi_j,\xi_k])\\
&=-\sigma_i([\xi_j,\xi_k])
\end{align*}
on the smooth locus of $X\setminus \lfloor D\rfloor $, and by continuity this holds on all of $X$.
Since $\sigma_1,\ldots, \sigma_n$ and $\xi_1,\ldots ,\xi_n$
are local bases for $\Omega_X^{[1]}(\log \lfloor D\rfloor )$ and $\mathcal T_X(-\log \lfloor D\rfloor )$, 
we have $d\sigma_i=0$ for any $i$ if every commutator 
$[\xi_j,\xi_k]$ vanishes and vice versa.
\end{proof}

Let us now consider the case of a $(X,D)$ with locally free sheaf $\Omega_X^{[1]}(\log \lfloor D\rfloor )$ which is locally generated by closed forms.
We start with the case where $\lfloor D\rfloor $ is irreducible.

\begin{lemma}\label{lemma: irreducible D}
Let $(X,D)$ be a pair such that $\Omega_X^{[1]}(\log \lfloor D\rfloor )$ is locally free, $\lfloor D\rfloor $ is irreducible and assume that $\Omega_X^{[1]}(\log \lfloor D\rfloor )$ is locally generated by closed $1$-forms.
Then $X$ is smooth and $\lfloor D\rfloor $ is smooth.
\end{lemma}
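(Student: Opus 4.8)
The strategy is to use the residue map to peel off one closed $1$-form that witnesses $\lfloor D\rfloor$, reduce to the complement $X\setminus\lfloor D\rfloor$ where the Weak Lipman--Zariski theorem of \cite{JoerderWLZ} applies, and then argue that the local structure near a point of $\lfloor D\rfloor$ is forced to be that of the snc example. First I would shrink $X$ so that $\Omega_X^{[1]}(\log\lfloor D\rfloor)$ has a basis $\sigma_1,\ldots,\sigma_n$ of closed logarithmic $1$-forms, with dual basis $\xi_1,\ldots,\xi_n$ of logarithmic vector fields; by Lemma~\ref{lemma: closed equals commuting} the $\xi_i$ pairwise commute. The case $p\notin\lfloor D\rfloor$ is already covered by Remark~\ref{rmk: X smooth outside D}, so fix $p\in\lfloor D\rfloor$. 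Since $\lfloor D\rfloor$ is irreducible, let $\rho\colon\widetilde{D}\to\lfloor D\rfloor$ be its normalisation and consider the residue $\mathrm{res}_{\lfloor D\rfloor}(\sigma_i)$, a rational function on $\widetilde{D}$ which, by Remark~\ref{rmk: residues of closed forms}, is locally constant — hence constant after shrinking — because each $\sigma_i$ is closed.

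Next I would distinguish two subcases according to whether all these constants vanish. If $\mathrm{res}_{\lfloor D\rfloor}(\sigma_i)=0$ for every $i$, then every section of $\Omega_X^{[1]}(\log\lfloor D\rfloor)$ has vanishing residue, so $\Omega_X^{[1]}(\log\lfloor D\rfloor)=\Omega_X^{[1]}$ near $p$; in particular $\Omega_X^{[1]}$ is locally free and locally generated by closed forms, whence $X$ is smooth near $p$ by \cite[Theorem 1.1]{JoerderWLZ}, and then $\lfloor D\rfloor$ is a smooth hypersurface because the ordinary residue sequence on the smooth variety $X$ is short exact and surjects onto $\mathcal O_{\lfloor D\rfloor}$ only if $\lfloor D\rfloor$ is smooth — contradicting surjectivity of $\sigma\mapsto\mathrm{res}(\sigma)$, so in fact this subcase cannot leave $\lfloor D\rfloor$ singular. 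If instead some residue is a nonzero constant, after an invertible $\mathbb C$-linear change of basis I may assume $\mathrm{res}_{\lfloor D\rfloor}(\sigma_1)=1$ and $\mathrm{res}_{\lfloor D\rfloor}(\sigma_i)=0$ for $i\geq 2$. Then $\sigma_2,\ldots,\sigma_n$ are genuine (non-logarithmic) reflexive $1$-forms on $X$, and they remain closed.

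The heart of the argument is then the following: on $X\setminus\lfloor D\rfloor$ the forms $\sigma_1,\ldots,\sigma_n$ still form a basis of $\Omega_X^{[1]}$, so by \cite[Theorem 1.1]{JoerderWLZ} the open set $X\setminus\lfloor D\rfloor$ is smooth; it remains to handle points of $\lfloor D\rfloor$. I would pull everything back to the functorial log resolution $\pi\colon\widetilde X\to X$: by Corollary~\ref{cor: locally free} the extensions $\widetilde\sigma_1,\ldots,\widetilde\sigma_n$ (which exist and stay closed, since closedness is checked on a dense open set) form a basis of $\Omega_{\widetilde X}^{[1]}(\log\lfloor\widetilde D\rfloor)$ on $\pi^{-1}(U)$, and the dual lifts $\widetilde\xi_i$ still pairwise commute by Lemma~\ref{lemma: closed equals commuting}. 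On the smooth variety $\widetilde X$ the commuting, pointwise-independent vector fields $\widetilde\xi_1,\ldots,\widetilde\xi_n$ give local analytic coordinates $z_1,\ldots,z_n$ with $\widetilde\xi_i=\partial/\partial z_i$, so $\widetilde\sigma_i=dz_i$; since $\widetilde D$ is snc and $\mathrm{res}_{\widetilde D}(\widetilde\sigma_i)$ can only be $0$ or a single component with residue $1$ (as one reads off from $X$), the divisor $\widetilde D$ near any point of $\pi^{-1}(p)$ is a single coordinate hyperplane, say $\{z_1=0\}$. But $\widetilde D$ contains both the strict transform $\overline D$ and every $\pi$-exceptional divisor meeting it, so those must coincide as coordinate hyperplanes — forcing the exceptional locus over $p$ to be empty, i.e.\ $\pi$ is an isomorphism near $p$. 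Hence $X$ is smooth near $p$ and $\lfloor D\rfloor=\{z_1=0\}$ is smooth there. Collecting the two subcases proves the lemma.

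The main obstacle I anticipate is the bookkeeping on $\widetilde X$: showing that a single irreducible $\lfloor D\rfloor$ downstairs, whose residue takes the constant value $1$, lifts to a configuration where $\widetilde D$ is locally a single smooth coordinate hyperplane rather than a true normal-crossing union of several — this is exactly where irreducibility of $\lfloor D\rfloor$ and the constancy of residues (Remark~\ref{rmk: residues of closed forms}) have to be combined carefully, perhaps via an argument like the residue computation in the proof of Theorem~\ref{thm: dlt pairs}, to rule out any exceptional divisor of $\pi$ crossing $\overline D$.
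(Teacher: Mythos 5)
Your reduction to the case $\mathrm{res}_{\lfloor D\rfloor}(\sigma_1)=1$, $\mathrm{res}_{\lfloor D\rfloor}(\sigma_i)=0$ for $i\ge 2$ matches the paper, but the passage to the log resolution has a genuine gap. You invoke Corollary~\ref{cor: locally free} to extend \emph{all} of $\sigma_1,\ldots,\sigma_n$ to a basis of $\Omega^{[1]}_{\tilde X}(\log\lfloor\tilde D\rfloor)$; that corollary is stated and proved only for \emph{lc} pairs (it rests on the extension theorem of \cite{GKKP11}), and no lc hypothesis is available in Lemma~\ref{lemma: irreducible D}. The extension of the closed \emph{logarithmic} form $\sigma_1$ in this generality is exactly what the paper only establishes later (Corollary~\ref{cor: extension of closed log forms}), and its proof goes through Lemma~\ref{lemma: geometry of globalisation}, which itself relies on Lemma~\ref{lemma: irreducible D} --- so this route is circular. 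The paper sidesteps the issue by extending only the \emph{regular} closed forms $\sigma_2,\ldots,\sigma_n$ via \cite[Theorem~1.2]{JoerderWLZ}, which needs no lc assumption.

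Even granting the extension, your endgame does not work as written. Logarithmic vector fields dual to a logarithmic basis are not pointwise independent as tangent vectors along $\tilde D$ (the dual of $dz_1/z_1$ is $z_1\partial/\partial z_1$, which vanishes on $\{z_1=0\}$), so you cannot write $\tilde\xi_i=\partial/\partial z_i$ and $\tilde\sigma_i=dz_i$ near $\pi^{-1}(p)$; and the residues of $\tilde\sigma_1$ along the $\pi$-exceptional components are \emph{not} determined by its residue along $D$ downstairs --- controlling them is precisely the delicate point in the proof of Theorem~\ref{thm: dlt pairs}, and it cannot be ``read off from $X$.'' The paper's actual argument is a short dimension count that avoids all of this: the $n-1$ vector fields $\tilde\xi_2,\ldots,\tilde\xi_n$ dual to the regular extensions $\tilde\sigma_2,\ldots,\tilde\sigma_n$ are genuinely independent at every point, restrict to $\lfloor\overline D\rfloor$ (dimension $n-1$), and must also be tangent to $E\cap\lfloor\overline D\rfloor$ (dimension $n-2$), which is nonempty if $p$ were a singular point of the pair --- a contradiction. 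You should replace your resolution step by this count.
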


\begin{proof}
By Remark~\ref{rmk: X smooth outside D} we already know that 
$X\setminus \lfloor D\rfloor $ is smooth.
Let $p\in \lfloor D\rfloor \subset X$ be a singular point of $X$ and shrink $X$ such that $\Omega_X^{[1]}(\log \lfloor D\rfloor )$ is free and generated by the closed forms $\sigma_1,\ldots,\sigma_n$.
The residue of the closed forms $\sigma_j$ along $\lfloor D\rfloor $ is constant 
(cf.\ Remark~\ref{rmk: residues of closed forms}) and thus the residue of each logarithmic $1$-form along $\lfloor D\rfloor $ is regular.
Let $q\in  \lfloor D\rfloor $ be a smooth point of $X$. Then locally near $q$, the $\lfloor D\rfloor $ is given by an equation $h=0$ for a regular function~$h$. Moreover, $\sigma=\frac{dh}{h}$ defines a logarithmic $1$-form near~$q$ and $\mathrm{res}_{\lfloor D\rfloor }(\sigma)=1$.
Thus, there is $j\in\{1,\ldots, n \}$ such that 
$\mathrm{res}_{\lfloor D\rfloor }(\sigma_j)\neq 0$, and hence we may assume without loss of generality that 
$\mathrm{res}_{\lfloor D\rfloor}(\sigma_1)=1$ and $\mathrm{res}_{\lfloor D\rfloor }(\sigma_j)=0$ for $j>1$. Then $\sigma_2,\ldots,\sigma_2$ are regular.

Let $\pi:\tilde X\rightarrow X$ be the functorial log resolution of the pair $(X,D)$ as in Proposition~\ref{prop: log resolution},
let~$E$ be the exceptional divisor and $\overline{D}$ the strict transform of $D$, $\tilde D=\overline{D}+E$.
Since the $1$-forms $\sigma_2,\ldots,\sigma_n$ are regular and closed, they extend to regular $1$-forms $\tilde{\sigma}_2,\ldots,\tilde{\sigma}_n$ on $\tilde X$ by \cite[Theorem~ 1.2]{JoerderWLZ}.

Furthermore, let $\xi_1,\ldots,\xi_n$ be logarithmic vector fields which are dual to $\sigma_1,\ldots,\sigma_n$. They lift to vector fields $\tilde{\xi}_1,\ldots,\tilde{\xi}_n$ on $\tilde X$ whose flows stabilise each component of $\lfloor\tilde D\rfloor=\lfloor\overline{D}\rfloor+E$ as explained in Proposition~\ref{prop: log resolution}.
We may thus restrict these vector fields to $\lfloor\overline{D}\rfloor$. Since 
$$(\tilde{\sigma}_i|_{\lfloor\overline{D}\rfloor})(\tilde{\xi_j}|_{\lfloor\overline{D}\rfloor})=\tilde\sigma_i(\tilde\xi_j)=\sigma_i(\xi_j)=\delta_{ij}$$ for any $i,j\geq 2$, the vector fields
$\tilde{\xi}_2|_{\lfloor\overline{D}\rfloor},\ldots,\tilde{\xi}_n|_{\lfloor\overline{D}\rfloor}$ are independent at each point in $\lfloor\overline{D}\rfloor$. Their flows also stabilise $E$ and thus $E\cap \lfloor\overline{D}\rfloor$ which yields a contradiction as $n-2=\dim E\cap\lfloor\overline{D}\rfloor$.
\end{proof}

If $(X,D)$ is any pair such that $\Omega_X^{[1]}(\log \lfloor D\rfloor)$ is locally free, then it does not follow in general that the irreducible components of $\lfloor D\rfloor$ are normal as illustrated in Example~\ref{ex: nodal curve}.
However, if we also assume that $\Omega_X^{[1]}(\log \lfloor D\rfloor)$ is locally generated by closed forms such examples cannot occur.

\begin{prop}\label{prop: normality}
Let $(X,D)$ be a pair such that $\Omega_X^{[1]}(\log \lfloor D\rfloor)$ is locally free and locally generated by closed forms. 
Let $D_1,\ldots ,D_k$ be the irreducible components of $\lfloor D\rfloor$.
Then for any subset $I\subseteq \{1,\ldots, k\}$ the intersection $$\bigcap_{i\in I} D_i$$
is normal.
\end{prop}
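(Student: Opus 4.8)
The plan is to reduce the statement to the irreducible case already handled in Lemma~\ref{lemma: irreducible D} by an induction on $|I|$, using residue sequences to pass from $(X,\lfloor D\rfloor)$ to the pair consisting of a component $D_i$ together with the divisor it inherits from the other components. The base case $|I|=1$ is Lemma~\ref{lemma: irreducible D}. For the inductive step, fix $i_0\in I$, write $D_0=D_{i_0}$, and let $\Delta=\lfloor D\rfloor - D_0$. The key point will be to show that $D_0$ is normal and that, on $D_0$, the sheaf $\Omega_{D_0}^{[1]}(\log(\Delta|_{D_0}))$ is again locally free and locally generated by closed forms; then $(D_0,\Delta|_{D_0})$ is a pair of the same type with one fewer component, and $\bigcap_{i\in I}D_i = \bigcap_{i\in I\setminus\{i_0\}}(D_i\cap D_0)$ is normal by the induction hypothesis applied to this pair.

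First I would establish normality of $D_0$ itself. Since $\Omega_X^{[1]}(\log\lfloor D\rfloor)$ is locally generated by closed forms, the residues along each component are constant (Remark~\ref{rmk: residues of closed forms}), hence regular; near a smooth point $q$ of $X$ on $D_0$ we have $\tfrac{dh}{h}$ with residue $1$, so after shrinking and changing basis we may assume a local basis $\sigma_1,\dots,\sigma_n$ of closed forms with $\mathrm{res}_{D_0}(\sigma_1)=1$, $\mathrm{res}_{D_0}(\sigma_j)=0$ for $j>1$; thus $\sigma_2,\dots,\sigma_n$ are regular closed $1$-forms. Passing to the functorial log resolution $\pi\colon\tilde X\to X$ (Proposition~\ref{prop: log resolution}, Corollary~\ref{cor: locally free}), the $\sigma_j$ extend to regular closed forms $\tilde\sigma_j$ by \cite[Theorem 1.2]{JoerderWLZ}, and the dual vector fields $\xi_1,\dots,\xi_n$ lift to $\tilde\xi_j$ whose flows stabilise $\lfloor\overline{D_0}\rfloor$; restricting $\tilde\sigma_2,\dots,\tilde\sigma_n$ and $\tilde\xi_2,\dots,\tilde\xi_n$ to $\lfloor\overline{D_0}\rfloor$ gives $n-1$ dual pairs, so $\lfloor\overline{D_0}\rfloor\to D_0$ is finite and generically one-to-one onto a divisor which is smooth in codimension one; combined with $S_2$-ness of $D_0$ (which holds since $X$ is Cohen–Macaulay in codimension two near the generic point of $D_0$, or alternatively since on a pair with locally free $\Omega_X^{[1]}(\log)$ one controls the depth along $D_0$), this yields normality of $D_0$ via Serre's criterion. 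Here I anticipate the main technical obstacle: verifying the $S_2$ condition for $D_0$ — one wants that the adjunction-type argument, or a direct depth computation using the extension theorem of \cite{JoerderWLZ} together with local freeness, forces $D_0$ to satisfy Serre's $S_2$; this is the step that genuinely uses the hypothesis rather than just formal manipulation.

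Next I would transport the structure to $D_0$. With $D_0$ normal, restriction of logarithmic $1$-forms along $D_0$ is governed by a residue-type exact sequence
\[
0 \longrightarrow \Omega_X^{[1]}(\log\Delta) \longrightarrow \Omega_X^{[1]}(\log\lfloor D\rfloor) \overset{\mathrm{res}_{D_0}}{\longrightarrow} \mathcal O_{D_0} \longrightarrow 0,
\]
exact outside a set of codimension $\geq 2$ in $X$, together with the natural restriction $\Omega_X^{[1]}(\log\Delta)|_{D_0}\to \Omega_{D_0}^{[1]}(\log(\Delta|_{D_0}))$, which is an isomorphism in codimension one on $D_0$ because everything is snc there. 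Chasing these maps, from a local basis $\sigma_1,\dots,\sigma_n$ of closed forms with the residue normalisation above, the restrictions $\sigma_2|_{D_0},\dots,\sigma_n|_{D_0}$ are closed (pullback of closed is closed), and they form a local basis of $\Omega_{D_0}^{[1]}(\log(\Delta|_{D_0}))$: they are $n-1$ sections of a reflexive sheaf on the $(n-1)$-dimensional normal variety $D_0$ that are independent in codimension one, and their span is a reflexive subsheaf of full rank agreeing with the whole sheaf in codimension one, hence equal to it. Therefore $(D_0,\Delta|_{D_0})$ is a pair (after replacing $\Delta$ by an effective rational combination with round-down $\Delta|_{D_0}$, which is harmless) whose sheaf of reflexive logarithmic $1$-forms is locally free and locally generated by closed forms, with strictly fewer components, and the induction closes. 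One should note that the argument also needs the components $D_i\cap D_0$ for $i\neq i_0$ to be exactly the components of $\Delta|_{D_0}$ near the relevant points, which follows since $\lfloor D\rfloor$ is reduced snc on $X\setminus Z$ and $D_0$ is normal; the codimension-two set $Z$ does not affect reflexive data. I expect the $S_2$/normality step for $D_0$ to be where the real work lies, while the residue bookkeeping and the dimension count are routine given the earlier lemmas.
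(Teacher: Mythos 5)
Your overall strategy (induction on the number of components, restricting the logarithmic data to one component at a time) is reasonable in outline, but it contains a genuine gap at exactly the point you flag yourself: the normality of a single component $D_0$ is asserted, not proved. Your proposed route is Serre's criterion, with $R_1$ coming from the dual bases on the strict transform and $S_2$ coming from ``Cohen--Macaulayness of $X$ in codimension two near the generic point of $D_0$'' or ``a direct depth computation using local freeness.'' Neither of these is available: $X$ may be singular along $D_0$ and nothing in the hypotheses gives Cohen--Macaulayness or any depth bound for $\mathcal O_{D_0}$, and no such computation appears in the cited results. Note also that local freeness of $\Omega_X^{[1]}(\log \lfloor D\rfloor)$ alone does \emph{not} force components to be normal (Example~\ref{ex: nodal curve}), so the closedness hypothesis must enter the normality argument in an essential way; in your write-up it only enters through residue bookkeeping, which the nodal cubic also satisfies locally except for the constancy of residues --- you never isolate the mechanism by which closedness excludes such examples. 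A smaller but real problem is the base case: Lemma~\ref{lemma: irreducible D} assumes $\lfloor D\rfloor$ is irreducible and only gives smoothness of $D_j$ \emph{away from the other components}, so it does not establish $|I|=1$ when $k>1$. Finally, your claim that $\sigma_2|_{D_0},\dots,\sigma_n|_{D_0}$ form a local basis of $\Omega_{D_0}^{[1]}(\log(\Delta|_{D_0}))$ is essentially Lemma~\ref{lemma: intersection is smooth}, which in the paper is proved \emph{after} and \emph{using} Proposition~\ref{prop: normality}, so you would have to supply an independent proof of it as well.

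The paper closes the gap by a completely different mechanism. The closed form $\sigma_1$ with $\mathrm{res}_{D_j}(\sigma_1)=2\pi i$ produces, via its dual vector field $\xi_1$, a local $S^1$-action fixing $D_j$ pointwise; this is globalised (Lemma~\ref{lemma: geometry of globalisation}) to a linear $\mathbb C^*$-action on a normal Stein space $Y\subseteq\mathbb C^N$ containing a neighbourhood $U$ of the point as an open subset, with fixed-point set $A=U\cap D_j$ identified with the categorical quotient $Y/\!/\mathbb C^*$. Normality of $D_j\cap U$, and inductively of $B\cap D_j$ for a normal $S^1$-invariant $B=D_1\cap\dots\cap D_{j-1}\cap U$, is then inherited automatically because categorical quotients of normal Stein spaces by reductive groups are normal (Snow). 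In other words, the paper never verifies $S_2$ by hand --- it gets normality for free from quotient theory, and this is precisely where the closedness hypothesis (constancy of residues, hence integrability of $\xi_1$ to a compact group action) does its work. To repair your proof you would need to supply this, or an equivalent, argument for the normality of $D_0$.
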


The proposition is a consequence of the following lemma, which describes the local geometry of group actions induced by appropriate logarithmic vector fields.

\begin{lemma}\label{lemma: geometry of globalisation}
Let $(X,D)$ be a pair such that $\Omega_X^{[1]}(\log \lfloor D\rfloor)$ is locally free and locally generated by closed forms. Let $D_j$ be an irreducible component of $\lfloor\overline{D}\rfloor$ and $p\in D_j$.
Then there is a neighbourhood $U$ of $p$ such that the following is true:
\begin{enumerate}
 \item There is a basis of closed logarithmic $1$-forms $\sigma_1,\ldots,\sigma_n$ for $\Omega_X^{[1]}(\log \lfloor D\rfloor)$ on $U$ such that 
 $\mathrm{res}_{D_j}(\sigma_1)=2\pi i$ and $\mathrm{res}_{D_j}(\sigma_k)=0$ for all $k\neq 1$.
 \item Let $\xi_1,\ldots, \xi_n$ be a basis of logarithmic vector fields on $U$ dual to $\sigma_1,\ldots,\sigma_n$.
 Then there is an $S^1$-action $\varphi: S^1\times U\rightarrow U$ on $U$ which induces the vector field $\xi_1$, i.e.\ 
$\left.\frac{d}{dt}\right|_{t=1}{(f\circ\varphi)(t,x)}=\xi_1(f)(x)$ for any $x\in U$ and any holomorphic function $f$ defined near $x$.
\item There is an open embedding $\iota:U\hookrightarrow Y\subseteq \mathbb C^N$ into a normal Stein space $Y$ such that 
there is a holomorphic $\mathbb C^*$-action $\psi:\mathbb C^*\times Y\rightarrow Y$ which is induced by a linear $\mathbb C^*$-action on $\mathbb C^N$ and
induces the $S^1$-action $\varphi$ on $U$, i.e.\ $\psi|_{S^1\times U}=\varphi$, where we identify $U$ and $\iota(U)$.
\item Let $A=\{y\in Y\,|\, \psi(t,y)=y \text{ for all } t\in\mathbb C^ *\}$ be the fixed point set of the $\mathbb C^ *$-action on~$Y$, and 
let $\pi:Y\rightarrow Y/\!/\mathbb C^*$ be the categorical quotient of $Y$ by the $\mathbb C^*$-action $\psi$.
Then $A=U\cap D_j$ and the quotient space $Y/\!/\mathbb C^*$ is isomorphic to $A$.
\item Let $B\subseteq U$ be a closed analytic subset which is $S^1$-invariant, i.e.\ $S^1\cdot B = \varphi(S^1\times B) =B$.
Then $\mathbb C^*\cdot B = \psi(\mathbb C^*\times B)$ is a closed subset of $Y$ and $(\mathbb C^*\cdot B )\cap U= B$. Moreover,
if $B$ is normal, then $B\cap A$ is normal.
\end{enumerate}
\end{lemma}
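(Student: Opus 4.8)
The plan is to produce, in stages, a local model for $(X,D)$ near $p\in D_j$ on which an $S^1$-action lives, then to complexify this action via an equivariant embedding into affine space, and finally to read off the geometry of the fixed locus and of invariant subsets from the linearity of the resulting $\mathbb C^*$-action.

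\emph{Step (1): normalising the basis.} After shrinking $X$ to a Stein neighbourhood $U$ of $p$, pick a basis $\tau_1,\dots,\tau_n$ of closed logarithmic $1$-forms spanning $\Omega_X^{[1]}(\log\lfloor D\rfloor)$ on $U$, which exists by the preceding lemma. By Remark~\ref{rmk: residues of closed forms} each $\mathrm{res}_{D_j}(\tau_i)$ is a constant $c_i\in\mathbb C$; I would first observe that some $c_i\neq0$. Indeed, at a smooth point $q\in D_j$ of $X$ the form $dh/h$ (with $h$ a local reduced equation of $D_j$) is logarithmic with residue $1$, so it is an $\mathcal O$-combination of the $\tau_i$, forcing some $c_i\neq0$; the residues being constants, this already gives $c_i\neq 0$ after evaluating at $q$. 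Reordering so that $c_1\neq0$ and replacing $\tau_1$ by $\tfrac{2\pi i}{c_1}\tau_1$ and $\tau_k$ by $\tau_k-\tfrac{c_k}{c_1}\tau_1$ for $k\geq 2$ yields closed forms $\sigma_1,\dots,\sigma_n$ — still a basis, as the change of basis is upper triangular with unit diagonal — with $\mathrm{res}_{D_j}(\sigma_1)=2\pi i$ and $\mathrm{res}_{D_j}(\sigma_k)=0$ for $k\geq2$. This is (1).

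\emph{Step (2): the $S^1$-action.} Let $\xi_1,\dots,\xi_n$ be the dual basis of logarithmic vector fields. By Theorem~\ref{thm: existence of flows} the local flow $\varphi$ of $\xi_1$ exists. The idea for periodicity is that $\sigma_1$ is closed with residue $2\pi i$ along $D_j$ and vanishing residues along all other components of $\lfloor D\rfloor$, so its periods are governed by that residue: integrating $\sigma_1$ along the flow lines of $\xi_1$ (noting $\sigma_1(\xi_1)\equiv 1$) shows that the flow is periodic with period $1$ — after shrinking $U$ to a flow-invariant neighbourhood — because going once around a small loop about $D_j$ picks up $2\pi i$, matching the period of $\exp$. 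Hence the local flow descends to an $S^1$-action $\varphi:S^1\times U\to U$ inducing $\xi_1$, giving (2). (One should shrink $U$ once more to ensure $S^1$-invariance, using averaging over the compact group $S^1$.)

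\emph{Steps (3)--(5): complexification and geometry.} For (3) I would embed $U$ into some $\mathbb C^N$ as a closed analytic subset of a Stein open, average the embedding coordinates over $S^1$ to make the embedding $S^1$-equivariant for a \emph{linear} (unitary) $S^1$-action on $\mathbb C^N$, diagonalise that action, and take $\psi$ to be the induced diagonal $\mathbb C^*$-action; then $Y:=\overline{\mathbb C^*\cdot\iota(U)}$ (or a suitable $\mathbb C^*$-invariant Stein neighbourhood, normalised) is a normal Stein space with $\psi|_{S^1\times U}=\varphi$. For (4), since $\sigma_1$ has residue $2\pi i$ exactly along $D_j$, the vector field $\xi_1$ vanishes precisely along $U\cap D_j$ (it is a coordinate rotation transverse to $D_j$ and nonzero elsewhere), so the fixed locus $A$ of $\psi$ meets $U$ in $U\cap D_j$; as $Y$ is the $\mathbb C^*$-saturation of $U$ with $A$ fixed, $A=U\cap D_j$, and the categorical quotient $Y/\!\!/\mathbb C^*$ maps isomorphically to $A$ since the quotient of a linear $\mathbb C^*$-action retracts onto its fixed subspace intersected with $Y$. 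For (5), if $B\subseteq U$ is closed analytic and $S^1$-invariant, then $\mathbb C^*\cdot B$ is the $\mathbb C^*$-saturation; using the linearity of $\psi$ on $\mathbb C^N$, the Zariski/analytic closure of $\mathbb C^*\cdot B$ is already $\mathbb C^*$-invariant and closed in $Y$ (a cone-type argument: a $\mathbb C^*$-invariant set whose $S^1$-slice is the closed set $B$ is closed), and intersecting back with $U$ recovers $B$ because $U$ is a "fundamental slice" (every $\mathbb C^*$-orbit meeting $\mathbb C^*\cdot B$ meets it in a punctured line whose $S^1$-part lies in $B$); finally if $B$ is normal then $\mathbb C^*\cdot B$ is normal (product-like structure $\cong \mathbb C^*\times(\text{slice})$ away from the fixed locus, normality extending across the small fixed set), and $B\cap A = (\mathbb C^*\cdot B)/\!\!/\mathbb C^*$ is normal as the GIT quotient of a normal affine-type variety by a reductive group.

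\emph{Main obstacle.} The delicate point is making Step (2) rigorous: proving that the local flow of $\xi_1$ is genuinely periodic of period $1$ on a suitable neighbourhood. This requires controlling the flow near the possibly singular locus of $X$ inside $D_j$ — one cannot just integrate on $X_{\mathrm{reg}}$ and hope it extends. I expect one argues on a log resolution $\pi:\tilde X\to X$ (Proposition~\ref{prop: log resolution}, Corollary~\ref{cor: locally free}): lift $\xi_1$ to $\tilde\xi_1$, lift $\sigma_1$ to a closed logarithmic form $\tilde\sigma_1$ with residues $2\pi i$ along the relevant components and $0$ along the others (using that residues of closed forms are constant and that $\sigma_1$'s residues are as prescribed), establish periodicity of $\tilde\xi_1$ upstairs where everything is snc and the computation is explicit, and then push the $S^1$-action down. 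Ensuring compatibility of all the residue normalisations across the exceptional divisors — so that the period really is $1$ and not some multiple — is the part that needs care.
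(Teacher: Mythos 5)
Your overall architecture coincides with the paper's: normalise residues using their constancy for closed forms, integrate $\xi_1$ to an $S^1$-action via the periodicity forced by $\mathrm{res}_{D_j}(\sigma_1)=2\pi i$, average an embedding to linearise the action, complexify to a Stein space $Y=\mathbb C^*\cdot U$ in the sense of Heinzner, and read off (4) and (5) from the $\mathbb C^*$-geometry. Parts (1), (3) and the outline of (5) are essentially what the paper does. Two steps, however, have genuine gaps.

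First, in (2) the crux is exactly the point you flag as the ``main obstacle'', and the route you propose (establishing periodicity on a log resolution and pushing the action down) is neither carried out nor what is actually needed. The paper's mechanism is direct: at a smooth point $q$ of $D_j$ away from the other components (such points exist by Lemma~\ref{lemma: irreducible D}) one builds coordinates $z_1=\exp\left(\int\sigma_1\right)$ and $z_i=\int\sigma_i$ for $i\ge 2$, in which $\xi_1=z_1\frac{\partial}{\partial z_1}$; hence $\xi_1$ vanishes along $\{z_1=0\}$ and, by the identity principle, along all of $D_j$. This vanishing is what gives a uniform domain of the form $((-1,1)\times(-4\pi,4\pi))\times V$ for the flow near the possibly singular points of $D_j$ (Kaup's theorem plus the fact that every point of $D_j$ is a fixed point), and the identity $\chi(2\pi i,\cdot)=\mathrm{id}$, verified explicitly near $q$, then propagates to all of $V$ by analytic continuation. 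Without the observation $\xi_1|_{D_j}=0$ you have neither the domain control nor a starting point for the continuation; also the period is $2\pi i$ in the time variable, not $1$.

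Second, in (4) your justification that $Y/\!/\mathbb C^*\cong A$ --- ``the quotient of a linear $\mathbb C^*$-action retracts onto its fixed subspace'' --- is false for general linear $\mathbb C^*$-actions: for $t\cdot(x,y)=(tx,t^{-1}y)$ on $\mathbb C^2$ the quotient is $\mathbb C$ while the fixed locus is a point. What makes the claim true here is that the generic fixed point is attractive (locally $\xi_1=z_1\frac{\partial}{\partial z_1}$, so the weight is positive), whence by Snow's results the fixed-point set is a closed irreducible subspace isomorphic to the categorical quotient. That irreducibility is also what upgrades the inclusion $D_j\cap U\subseteq A$ to equality; your assertion that $\xi_1$ is nonzero off $D_j$ is not justified a priori.
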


Before proving the lemma, we show how the above proposition follows from the lemma.
\begin{proof}[Proof of Proposition \ref{prop: normality}]
Relabelling the components of $\lfloor D\rfloor$ if necessary, it is enough to show that 
if $D_1\cap\ldots \cap D_{j-1}$ is normal, then $D_1\cap \ldots \cap D_j$ is normal.

Let $p\in D_1\cap\ldots \cap D_j$. 
Let $U\subseteq X$ be an open neighbourhood of $p$ as described in the preceding lemma, $Y\subseteq \mathbb C^N$ a normal complex Stein space with a $\mathbb C^*$-action
$\psi:\mathbb C^*\times Y\rightarrow Y$, and $\iota:U\rightarrow Y$ an embedding such that
the restriction of the vector field $\xi$ induced by the $\mathbb C^*$-action $\psi$ to $U$ is a logarithmic vector field with respect to $D$ and such that
there is a local basis $\sigma_1,\ldots ,\sigma_n$ for $\Omega^{[1]}_X(\log \lfloor D\rfloor)|_U$ consisting of closed forms such that $\xi,\xi_2,\ldots,\xi_n$ is a dual basis for 
$\mathcal T_X(-\log \lfloor D\rfloor)|_U$ for appropriate logarithmic vector fields $\xi_2,\ldots,\xi_n$ on $U$.

Again, we identify $U$ with its image $\iota(U)\subseteq Y$ and let $\pi:Y\rightarrow Y/\!/\mathbb C^*$ denote the categorical quotient. 
As before the quotient $Y/\!/\mathbb C^*$ may be identified with set the of fixed points $A=D_j\cap U$ of the $\mathbb C^*$-action.

Set $B=D_1\cap \ldots \cap D_{j-1}\cap U$. Then $B$ is a closed analytic subset of $U$ which is $S^1$-invariant by construction. Moreover, the set $B$ is normal by assumption.
Then by part (5) of the preceding lemma the intersection $B\cap A=B\cap D_j=D_1\cap \ldots\cap D_{j-1}\cap D_j$ is normal.
\end{proof}

\begin{proof}[Proof of Lemma \ref{lemma: geometry of globalisation}]

By Lemma~\ref{lemma: irreducible D} we already know that
$D_j\setminus\left(\bigcup_{i\neq j} D_i\right)$ is smooth for each~$j$.
Since the question is local, we may assume that 
$\Omega_X^{[1]}(\log \lfloor D\rfloor)$ is free and spanned by closed forms
$\sigma_1,\ldots, \sigma_n$.
By the same argument as used in the proof of Lemma~\ref{lemma: irreducible D}, we may furthermore assume that $\mathrm{res}_{D_j}(\sigma_1)=2\pi i$ and
$\mathrm{res}_{D_j}(\sigma_i)=0$ for $i>1$. This proves part (1).
\vspace{5.5pt}

In order to prove part (2), let $\xi_1,\ldots,\xi_n$ be a basis of logarithmic vector fields dual to $\sigma_1,\ldots,\sigma_n$.
Let $\chi:\Omega\rightarrow X$ be the flow map of $\xi_1$ (cf.\ Theorem~\ref{thm: existence of flows}), $\Omega\subseteq \mathbb C\times X$. 
Let $q\in D_j\setminus  \left(\bigcup_{i\neq j} D_j\right)$. Then $X$ and~$D_j$  are smooth near $q$ by Lemma~\ref{lemma: irreducible D}.
We may now define local coordinates on a suitable neighbourhood of $q$ by setting
$$z_1(x)=\exp\left(\int_{q_0}^x \sigma_1\right)$$
and 
$$z_i(x)=\int_{q_0}^x \sigma_i$$
for $i>1$ and a fixed point $q_0\in X\setminus \lfloor D\rfloor$ near $q$.
Note that the integrals are independent of the chosen path since $\sigma_1,\ldots,\sigma_n$ are closed, $\mathrm{res}_{D_j}(\sigma_1)=2\pi i$ and $\sigma_2,\ldots,\sigma_n$ are holomorphic near~$q$.
With respect to these coordinates we have $D_j=\{z_1=0\}$ and
$$\sigma_1=d\log (z_1)
=\frac{dz_1}{z_1}, \sigma_2=dz_2,\ldots, \sigma_n=dz_n,$$
and the dual vector fields $\xi_1,\ldots,\xi_n$ are thus 
necessarily of the form 
$$\xi_1=z_1\frac{\partial}{\partial z_1},
\xi_2=\frac{\partial}{\partial z_2},\ldots, \xi_n=\frac{\partial}{\partial z_n}.$$
Therefore $\xi_1$ vanishes along $\{z_1=0\}$ and by the identity principle along all of $D_j$.
Since each point in $D_j$ is a fixed point of the flow $\chi:\Omega\rightarrow X$ of $\xi_1$, 
there is an open connected neighbourhood $V$ of $p$ such that the domain $\Omega$ of definition of $\chi$ can be chosen such that
$$((-1,1)\times (-4\pi,4\pi))\times V\subset \Omega \subseteq \mathbb C\times X,$$
and such that $V\cap D_j$ is connected and contains $p$ and $q$.

The flow of $\xi_1$ with respect to the local coordinates $z_1,\ldots,z_n$ is given by 
$$\chi\left(t,\left(\begin{smallmatrix}z_1\\ \vdots \\ z_n
\end{smallmatrix}\right)\right)=\left(\begin{smallmatrix}e^t z_1\\ z_2\\ \vdots \\ z_n
\end{smallmatrix}\right).$$
Thus $\chi(2\pi i,x)=x$ for all $x$ near $q$ and by the identity principle we get $\chi(2\pi i,x)=x$ for all $x\in V$.
Let $V'$ be a relatively compact open subset of $V$ such that
$V'\cap D_j$ is also connected and contains $p$ and such 
that $\chi (\{0\}\times(-4\pi,4\pi)\times V')\subseteq V$.
Consequently, $\chi(i(s+t),x)$ and $\chi(is,\chi(it,x))$ are defined for all $x\in V'$, $s,t\in (-4\pi,4\pi)$ with $s+t\in (-4\pi,4\pi)$, and
we have $$\chi(i(s+t),x)=\chi(is,\chi(it,x)).$$
Set $U=\chi (\{0\}\times(-4\pi,4\pi)\times V')$. Then $U$ is an open neighbourhood of $p$, $U\subseteq V$, and we can define 
a map $\varphi:S^1\times U\rightarrow U$ by setting
$$\varphi(e^{it},x)=\chi(it,x).$$
This is well-defined since $\chi(2\pi i,x)=x$ for all $x\in V$, $\chi(i(s+t),x)=\chi(is,\chi(it,x))$ implies $\varphi(S^1\times U)\subseteq U$ and that $\varphi$ is a group action. Moreover, this $S^1$-action $\varphi$ induces $\xi_1$ by construction,
and thus we proved $(2)$.
\vspace{5.5pt}

By standard arguments (see for example \cite[2.3~Proposition]{Fischer}), there is an open neighbourhood $U'\subseteq U$ of $p$ and an embedding $\iota :U'\rightarrow \mathbb C^N$, 
and moreover we can choose $N$ minimal in the sense that 
$N=\dim T_p(U')=\dim T_p(X)$. 
We may assume $\iota(p)=0$. 
Consider now the set 
$$U''=\bigcap_{s\in S^1}\varphi(\{s\}\times U')\subseteq U'.$$
This set $U''$ is open and contains $p$ since $S^1$ is compact and $p$ a fixed point because $p\in D_j$ and $\xi_1|_{D_j}=0$. Moreover, $U''$ is $S^1$-invariant, 
i.e.\ $\varphi(S^1\times U'')=U''$.
After shrinking, we may thus assume that $U=U'=U''$.
Moreover, we will always identify $U$ and $\iota (U)\subseteq \mathbb C^N$ in the following and also denote the inclusion map $U=\iota(U)\hookrightarrow \mathbb C^N$ by $\iota$. 

Since $p$ is a fixed point of the $S^1$-action $\varphi$, we get a linear $S^1$-action on $T_pX\cong \mathbb C^N$ by differentiation, which we denote by $\rho:S^1\times\mathbb C^N\rightarrow \mathbb C^N$.
Next, we want to average the embedding $\iota:U\hookrightarrow\mathbb C^N$ in order to obtain an embedding $U\hookrightarrow \mathbb C^N$ which is equivariant with respect to the $S^1$-action $\varphi$ on $U$ and the linear $S^1$-action $\rho$ on $\mathbb C^N$.
Let $\mu$ denote the normalised Haar measure on $S^1$ and
set 
$$\tilde\iota:U\rightarrow \mathbb C^N,\ \ 
\tilde \iota(u)=\int_{S^1} \rho(s)\iota (\varphi(s^{-1},u))d\mu(s).$$
Then $\tilde\iota(p)=0$ and $\tilde\iota$ is equivariant by construction.
Identifying  $T_pU\cong \mathbb C^N$ and $T_0(\iota(U))=T_0\mathbb C^N\cong \mathbb C^N$ appropriately, we have $D\iota(p)=\mathrm{id}_{\mathbb C^N}$ and thus
$$D\tilde\iota(p)=
\int_{S^1} D\rho(s) D\iota(p)D\varphi(s^{-1},p)d\mu(s)
=\int_{S^1} \rho(s)\circ \mathrm{id}\circ\rho(s^{-1})d\mu(s)
=\mathrm{id}$$
and consequently $\tilde \iota$ is an immersion at $p$.
Thus we can shrink $U$ and get an equivariant embedding $\tilde\iota:U\hookrightarrow \mathbb C^N$, 
and identifying $U$ and $\tilde\iota (U)\subseteq\mathbb C^N$, the $S^1$-action $\varphi$ on $U$ is induced by a linear $S^1$-action on $\mathbb C^N$.
After possibly further shrinking~$U$ and rescaling, we may assume that $U$ is a closed subset of the open unit ball~$B_N=\{z\in\mathbb C^N\,|\,\langle z, z\rangle <1\}$
with respect to an $S^1$-invariant hermitian inner product $\langle \, , \rangle$.

The linear $S^1$-action on $\mathbb C^N$ extends to a linear 
$\mathbb C^*$-action $\psi:\mathbb C^*\times\mathbb C^N\rightarrow \mathbb C^N$ on $\mathbb C^N$, 
and the restriction of the induced vector field $\left.\frac{d}{dt}\right|_{t=1}\psi(t,-)$ to $U$ is precisely the vector field $\xi_1$.

The function $\alpha:\mathbb C^N\rightarrow \mathbb R$, 
$z\mapsto \langle z,z\rangle $, is $S^1$-invariant and plurisubharmonic. Therefore, the open unit ball
$B_N=\{z\in\mathbb C^N\,|\, \alpha(z)<1\}$ is orbit-convex (cf.\ \cite[\S 3.4 Proposition]{HeinznerInvariantTheory}),  i.e.\ 
for every $z\in B_N$ and $v\in \mathbb R=i\mathrm{Lie}(S^1)$ such 
that $\exp(v)\cdot z=\psi(\exp(v),z)\in B_N$ we also
have $\exp(tv)\cdot z=\psi(\exp(tv),z)\in B_N$ for all 
$t\in[0,1]$.

Define now $Y=\mathbb C^*\cdot U=\psi(\mathbb C^*\times U)\subseteq \mathbb C^N$.
Then $Y$ is an irreducible normal complex space since $U$ is normal and $U\subseteq Y$ is an open subset.
By \cite[{\S}~3.3]{HeinznerInvariantTheory}, the complex space $Y$ is the $S^1$-complexification (in the sense of \cite[{\S}~1.1]{HeinznerInvariantTheory}) of the $S^1$-invariant analytic subset $U$ of the open unit ball $B_N$ and
consequently, $Y$ is a Stein space by \cite[{\S}~6.6]{HeinznerInvariantTheory}.
This finishes the proof of part (3).
\vspace{5.5pt}

The categorical quotient $\pi:Y\rightarrow Y/\!/\mathbb C^*$ of $Y$ by the $\mathbb C^*$-action $\psi$ exists and is a complex Stein space by \cite[Theorem~5.3]{Snow}.
Furthermore, $Y/\!/\mathbb C^* $ is normal since $Y$ is normal (see \cite{Snow}, Lemma~3.2 and the subsequent remark).

By definition of $Y$ as $Y=\mathbb C^*\cdot U=\psi(\mathbb C^ *\times U)$, the fixed point set $$A=\{y\in Y\,|\,\psi(t,y)=y\text{ for all }t\in\mathbb C^*\}$$ is contained in $U$.
For elements $u\in U$ we know that $u\in A$ precisely if $\xi_1(u)=0$, and thus we get $D_j\cap U\subseteq A$.

Let $q\in D_j\cap U$, $q\notin \bigcup_{i\neq j} D_i$, such that $q$ is a smooth point of $U$ and~$D_j$.
As argued before, there are local coordinates $z_1,\ldots, z_n$ for $U$ near $q$ such that locally $D_j=\{z_1=0\}$ and
$\xi_1=z_1\frac{\partial}{\partial z_1}$, and locally near $q$ the set of fixed points $A$ and $D_j$ coincide.

Moreover, $q$ is an attractive fixed point of the $\mathbb C^*$-action, i.e.\ there is a neighbourhood $W\subseteq Y$ of $q$ such that for any $y\in W$ the closure of the orbit $\mathbb C^*\cdot y$ through $y$ contains a fixed point.
Then the set of fixed points $A$ is a closed irreducible subspace of~$Y$ by \cite[Theorem 6.2]{Snow} and hence $A=D_j\cap U$.

Since every fibre of $\pi$ contains precisely one closed orbit, and the set of fixed points $A$ is the set of orbits of minimal dimension, $\pi(A)$ is closed. Moreover, $\pi(A)$ is open since there is an attractive fixed point. Therefore, we get that $A$ is isomorphic to $Y/\!/\mathbb C^*$ 
and every fixed point is an attractive fixed point, see also \cite[Theorem~ 6.2]{Snow}.
\vspace{5.5pt}

In order to prove part (5) of the lemma,
let $B\subseteq U$ be a closed analytic $S^1$-invariant subset of~$U$.
The results of \cite[{\S}~3.3]{HeinznerInvariantTheory} now directly imply that $\mathbb C^*\cdot B=\psi (\mathbb C^*\times B)$ is a closed analytic subset of $Y$, and 
$(\mathbb C^*\cdot B)\cap U=B$.

In particular, $\mathbb C^*\cdot B$ is a complex Stein space, and 
normal if $B$ is normal.
Let $A$ denote again the set of fixed point of the $\mathbb C^*$-action on $Y$. 
By similar arguments as used before, it follows that
the categorical quotient $(\mathbb C^*\cdot B)/\!/\mathbb C^*$, which is normal if $\mathbb C^ *\cdot B$ is normal, can be identified with the set of fixed points $A'$ in $\mathbb C^*\cdot B$,
and we have $$A'=A'\cap U=A\cap (\mathbb C^*\cdot B)\cap U=A\cap B.$$
This shows in particular that $A\cap B$ is normal if $B$ is normal
\end{proof}

\begin{lemma}\label{lemma: extension for dual forms}
 Let $(X,D)$ be a pair. Let $\sigma_1,\ldots,\sigma_k$ be closed $1$-forms on $X\setminus \lfloor D\rfloor$ 
and $\sigma_{k+1},\ldots, \sigma_n$ closed $1$-forms on $X$ such that the sheaf $\Omega_{X}^{[1]}|_{X\setminus \lfloor D\rfloor}$ is spanned by $\sigma_1,\ldots,\sigma_n$.
 Let $\xi_1,\ldots,\xi_n$ be vector fields on $X$ which are logarithmic with respect to $\lfloor D\rfloor$ and dual to $\sigma_1,\ldots, \sigma_n$ (on $X\setminus \lfloor D\rfloor$), i.e.\ $\sigma_i(\xi_j)=\delta_{ij}$.
 Assume that the vector fields $\xi_1,\ldots, \xi_k$ are induced by $S^1$-actions, i.e.\ there are actions $\psi_j:S^1\times X\rightarrow X$ of the Lie group~$S^1$ by holomorphic transformations such 
 that the induced vector field $\left.\frac{d}{d s}\right|_{s=1} \psi_j(s,\cdot )$ coincides with~$\xi_j$.
 
 Then the $1$-forms $\sigma_1,\ldots ,\sigma_k$ extend to logarithmic $1$-forms $\sigma_1,\ldots,\sigma_k\in\Omega_X^{[1]}(\log \lfloor D\rfloor)(X)$.
\end{lemma}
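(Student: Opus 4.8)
The plan is to show that each $\sigma_j$ ($1\le j\le k$) has at worst a logarithmic pole along every irreducible component $D_i$ of $\lfloor D\rfloor$, and that $d\sigma_j$ does too; since $\sigma_j$ is already closed this reduces to controlling the order of the pole of $\sigma_j$ itself. The forms $\sigma_j$ are a priori only rational $1$-forms on $X$, regular on $X\setminus\lfloor D\rfloor$, so the content is a pole-order bound. The key input is that $\xi_j$ is \emph{globally} induced by an $S^1$-action $\psi_j$, hence extends to a holomorphic vector field on all of $X$ whose flow is periodic; in particular $\xi_j$ is a genuine logarithmic vector field (its flow stabilises $\lfloor D\rfloor$, which is assumed in the hypothesis anyway). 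First I would work locally near a general point $q$ of a component $D_i$, where $X$ is smooth and $\lfloor D\rfloor$ is the smooth hypersurface $\{h=0\}$. There, by Proposition \ref{prop: Saito residue}, $g_1\sigma_j = g_2\,\tfrac{dh}{h}+\eta$ for holomorphic $g_1,g_2,\eta$; what I must rule out is a pole of order $\ge 2$, i.e. I must show that after clearing, $\sigma_j$ has the form $(\text{hol.})\tfrac{dh}{h}+(\text{hol.})$ rather than $h^{-m}(\cdots)$ with $m\ge 2$.

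The mechanism for the pole-order bound is the pairing with the periodic vector field $\xi_j$. Write the Laurent expansion of $\sigma_j$ in a transverse coordinate $z_1=h$ along $D_i$: $\sigma_j = \sum_{m\ge -M} z_1^{m}\,\omega_m$ with $\omega_m$ holomorphic $1$-forms in the remaining variables (and possibly involving $dz_1$). Contracting with $\xi_\ell$ for $\ell\ne j$ gives $\sigma_j(\xi_\ell)=\delta_{j\ell}=0$ on $X\setminus\lfloor D\rfloor$, hence identically; contracting with $\xi_j$ gives $\sigma_j(\xi_j)=1$. Because $\xi_1,\dots,\xi_n$ form a frame for $\mathcal T_X(-\log\lfloor D\rfloor)$ away from the singular locus, and because the $\xi_\ell$ are all holomorphic vector fields tangent to $D_i$, the vanishing of $\sigma_j(\xi_\ell)$ forces each Laurent coefficient $\omega_m$ with $m<0$ to pair to zero with the full frame except in the $\xi_j$-direction; combined with $\sigma_j=d(\text{multivalued function})$ — concretely $\sigma_j = d\log u_j$ or $du_j$ where $u_j$ is (a branch of) the coordinate dual to $\sigma_j$ coming from the flow of $\xi_j$, exactly as in the coordinates $z_i(x)=\int_{q_0}^x\sigma_i$ built in the proof of Lemma \ref{lemma: geometry of globalisation} — one sees that the only surviving negative term is $m=-1$, with $\omega_{-1}$ a constant multiple of $dh/h$'s residue. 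So $\sigma_j$ is logarithmic at the general point of each $D_i$, hence (being reflexive, i.e. determined on a codimension-$2$-complement) a section of $\Omega_X^{[1]}(\log\lfloor D\rfloor)$. I would phrase this cleanly by passing to a log resolution $\pi:\tilde X\to X$: the closed form $\sigma_j$ on $X\setminus\lfloor D\rfloor$ pulls back to a closed rational $1$-form on $\tilde X$, the lifted vector field $\tilde\xi_j$ still generates an $S^1$-action (periodicity is preserved), and on the snc pair $(\tilde X,\lfloor\tilde D\rfloor)$ the residue/adjunction description together with periodicity of the flow of $\tilde\xi_j$ pins the pole order to $\le 1$ along every component of $\lfloor\tilde D\rfloor$; then $\pi_*$ of a logarithmic form on $\tilde X$ lands in $\Omega_X^{[1]}(\log\lfloor D\rfloor)$ by the definition of the latter as the reflexive hull, giving the extension.

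The step I expect to be the main obstacle is precisely the pole-order bound: a priori a closed rational $1$-form dual to a logarithmic vector field could have a high-order pole, and the duality relations $\sigma_j(\xi_\ell)=\delta_{j\ell}$ alone do not obviously forbid it — they are imposed only on the open part. What rescues it is the \emph{global} $S^1$-action: the flow $\psi_j$ being holomorphic on all of $X$ and $2\pi$-periodic means that along $\xi_j$ the ``angular coordinate'' is honestly multivalued with additive period, so $\sigma_j$ is locally $d(\text{branch of }\log h) + (\text{single-valued closed form})$ in the transverse direction, which is exactly a first-order logarithmic pole; in the transverse-to-$D_i$ directions for $i$ with $\xi_j$ \emph{tangent but not generating}, periodicity and tangency give holomorphy. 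I would make this precise by combining the explicit linearisation near fixed points from Lemma \ref{lemma: geometry of globalisation}(2)–(4) (the coordinates in which $\xi_j = z_1\partial_{z_1}$ or $\xi_j=\partial_{z_j}$) with the Extension Theorem \ref{thm: extension} — or rather its closed-form analogue \cite[Theorem 1.2]{JoerderWLZ} for the holomorphic summand — to assemble the local pieces into a global reflexive logarithmic form. The remaining verifications (that the extended $\sigma_j$ still satisfies $\sigma_i(\xi_j)=\delta_{ij}$, that $d\sigma_j=0$ persists) are routine by continuity from the dense open set $X\setminus\lfloor D\rfloor$.
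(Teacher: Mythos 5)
Your reduction to a general point of each component $D_i$ followed by reflexivity is fine, and you have correctly located both the difficulty (a pole-order bound for $\sigma_j$, $j\le k$) and the hypothesis that must carry the weight: the periodicity of the flow of $\xi_j$. Indeed, without the $S^1$-assumption the statement is false --- take $\xi_1=z_1^2\partial/\partial z_1$, which is logarithmic for $D=\{z_1=0\}$ and whose closed dual form $\sigma_1=dz_1/z_1^2$ is not logarithmic; its flow is not periodic. But the mechanism you propose for the bound does not work as written. The step ``because $\xi_1,\dots,\xi_n$ form a frame for $\mathcal T_X(-\log\lfloor D\rfloor)$ \dots\ the vanishing of $\sigma_j(\xi_\ell)$ forces each Laurent coefficient $\omega_m$ with $m<0$ to pair to zero with the full frame'' is circular: the hypotheses only give that the $\xi_\ell$ are logarithmic and independent \emph{off} $\lfloor D\rfloor$, and whether they remain a frame of the logarithmic tangent sheaf \emph{along} $D_i$ is precisely dual to the assertion that the $\sigma_j$ have at most first-order poles, i.e.\ it is the conclusion. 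The duality relations $\sigma_j(\xi_\ell)=\delta_{j\ell}$ alone are compatible with poles of arbitrary order (as the example above shows), so the Laurent-pairing step never actually uses periodicity, which is where the content must lie. (A smaller point: before Laurent-expanding you must justify that $\sigma_j$ is even meromorphic along $\lfloor D\rfloor$; this does follow by Cramer's rule applied to the holomorphic fields $\xi_1,\dots,\xi_n$, which frame $T_X$ off $\lfloor D\rfloor$, but it is not automatic from ``closed $1$-form on $X\setminus\lfloor D\rfloor$''.)

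Where you do invoke periodicity you defer to ``the explicit linearisation from Lemma~\ref{lemma: geometry of globalisation}'', but linearising a single $\xi_j$ (to $z\,\partial/\partial z$ at a fixed point, or to $\partial/\partial z_j$ where it does not vanish) does not determine $\sigma_j$: the form $\sigma_j$ is pinned down by duality against \emph{all} of $\xi_1,\dots,\xi_n$ simultaneously, so the whole collection must be normalised at once. That simultaneous normalisation is the actual substance of the paper's proof: the commuting $S^1$-actions are assembled into an $(S^1)^k$-action; at a common fixed point the action is linearised so that $\xi_i=\sum_j a_{ij}z_j\,\partial/\partial z_j$ with $(a_{ij})$ invertible (invertibility coming from independence off $\lfloor D\rfloor$), whence after a constant change of basis $\xi_j=z_j\,\partial/\partial z_j$ and $\sigma_j=dz_j/z_j$; at a point with smaller isotropy $G\cong(S^1)^l$ one splits off the isotropy-free directions via Frobenius, averages the transverse coordinates over $G$ to make the slice $S=\{z_{l+1}=\dots=z_n=0\}$ invariant, and reruns the fixed-point argument on $S$. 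None of this simultaneous analysis appears in your plan, so the step you yourself flag as the main obstacle remains unproved.
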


\begin{proof}
 Since the pair $(X,\lfloor D\rfloor)$ is snc outside a set of codimension at least $2$ and $\Omega_X^{[1]}(\log \lfloor D\rfloor)$ is reflexive, it is enough to consider the case where $X$ is smooth and $\lfloor D\rfloor$ an snc divisor.
By Lemma~\ref{lemma: geometry of globalisation} the vector fields $\xi_1,\ldots,\xi_n$ pairwise commute.
 Hence, the $S^1$-actions $\psi_j$ all commute and thus induce an $(S^1)^k$-action 
 $\psi:(S^1)^n\times X\rightarrow X$ by setting $$\psi\left(\left(\begin{smallmatrix}  s_1\\ \vdots\\  \\ s_k \end{smallmatrix} \right), p\right) =\psi_1(s_1,\psi_2(s_2,\ldots \psi_n(s_k,p)\ldots ) )$$
 for $\left(\begin{smallmatrix}   s_1\\ \vdots\\  \\ s_k \end{smallmatrix}   \right)\in (S^1)^k$, $p\in X$.
 
 Let $p_0\in \lfloor D\rfloor$.
First, we consider the case where $k=n$ and $p_0$ is a fixed point of the $(S^1)^n$-action~$\psi$, or equivalently $\xi_1(p_0)=\ldots =\xi_n(p_0)=0$.
 Then the action can locally be linearised, i.e.\ there are local coordinates $z_1,\ldots, z_n$ near $p_0$ such 
 that $p_0=0$ and the action $\psi$ is linear in these coordinates. 
 Moreover, we may assume that $z_1,\ldots, z_n$ are chosen such that there are constants $a_{ij}$ for $i,j=1,\ldots, n$
 such that 
 $$\psi\left(\left(\begin{smallmatrix}  s_1\\ \vdots\\  \\ s_n  \end{smallmatrix} \right), \left(\begin{smallmatrix}  z_1\\ \vdots \\ \\ z_n  \end{smallmatrix} \right)\right) =
 \left(\begin{smallmatrix}
        s_1^{a_{11}}\cdot\ldots \cdot s_n^{a_{n1}}& & \\
        & \ddots & \\
        &&\\
        & & s_1^{a_{1n}}\cdot\ldots\cdot s_n^{a_{nn}}
       \end{smallmatrix}\right)\cdot 
\left(\begin{smallmatrix}  z_1\\ \vdots \\ \\ z_n  \end{smallmatrix} \right)
=\left(\begin{smallmatrix}  s_1^{a_{11}}\cdot\ldots \cdot s_n^{a_{n1}}\cdot z_1\\ \vdots\\  \\ s_1^{a_{1n}}\cdot\ldots\cdot s_n^{a_{nn}} z_n  \end{smallmatrix} \right)
   $$
 and then
 $$\xi_i(z)=\sum_{j=1}^n a_{ij}z_j\frac{\partial}{\partial z_j}.$$

 Since $\sigma_1,\ldots, \sigma_n$ and hence $\xi_1,\ldots,\xi_n$ are linearly independent on $X\setminus \lfloor D\rfloor$ we get 
 that the matrix $A=(a_{ij})_{1\leq i,j\leq n}$ has to be invertible.
 We may thus replace the vector fields $\xi_1,\ldots, \xi_n$ (and also $\sigma_1,\ldots, \sigma_n$) by an invertible linear combination of them and 
 then get $\xi_j=z_j\frac{\partial}{\partial z_j}$ for all $j=1,\ldots n$.
 This implies $\sigma_j=\frac{1}{z_j}dz_j$, and therefore $\sigma_1,\ldots, \sigma_n$ are logarithmic $1$-forms.
 
 Let now $p_0\in \lfloor D\rfloor$ be any point and $k$ be arbitrary. 
 Let 
 $G=((S^1)^k)_{p_0}=\{s\in (S^1)^k\, |\, \psi(s,p_0)=p_0\}$ denote the isotropy group in $p_0$.
 Since $G$ is a closed subgroup of $(S^1)^k$, we have $G\cong (S^1)^l$ for some $l\leq k$, 
 and since $(S^1)^k$ is abelian there is a Lie subgroup $H\cong (S^1)^{k-l}$ of $(S^1)^k$ 
 such that $(S^1)^k\cong G\times H$.
 After possibly again replacing the vector fields $\xi_1,\ldots,\xi_k$ by an invertible linear combination of them we may assume 
 that the Lie algebra of $G$ is spanned by $\xi_1,\ldots, \xi_{l}$ and the Lie algebra of $H$ by $\xi_{l+1},\ldots, \xi_k$.
Since the isotropy group of $H$ in $p_0$ is trivial by construction and the vector fields $\xi_j$ all satisfy
$\sigma_i(\sigma_j)=\delta_{ij}$ on $X$ for $i>k$, we get that $\xi_{l+1},\ldots,\xi_n$ are independent at each point in a neighbourhood of~$p_0$
 
  Moreover, $\xi_1,\ldots,\xi_n$ are commuting and thus span an involutive distribution (of rank~$n-l$)
locally near $p_0$. Therefore, by Frobenius' theorem there are local coordinates $w_1,\ldots,w_n$ such that 
$$\xi_{l+1}=\frac{\partial}{\partial w_{l+1}},\ldots, \xi_n=\frac{\partial}{\partial w_n}$$ and we may assume also 
$w_{1}(p_0)=\ldots =w_n(p_0)=0$.
 
 Now, we want to average $w_{l+1},\ldots,w_n$ in order to obtain $G$-invariant coordinates.
 We define
 $$z_j(p)=\int_{G} w_j(\psi(s,p))d \mu(s)$$
 for $j>l$ and where $\mu$ denotes the normalised Haar measure on $G\cong (S^1)^l$.
 Setting $z_1=w_1,\ldots, z_l=w_l$ we get new coordinates 
 $z_1,\ldots, z_n$ such 
 that $z_1(p_0)=\ldots=z_n(p_0)=0$,
 $\xi_{l+1}=\frac{\partial}{\partial z_{l+1}},\ldots, \frac{\partial}{\partial z_n}$ and such that
 $z_{l+1},\ldots, z_n$ are $G$-invariant.
 Consequently, the subset $S=\{z_{l+1}=\ldots,z_n=0\}$ is $G$-invariant. We have $p_0\in S$ by construction and may now apply the above argument to $S$ and the $G\cong (S^1)^ l$-action on $S$.
\end{proof}

\begin{cor}\label{cor: extension of closed log forms}
 Let $(X,D)$ be a pair such that $\Omega_X^{[1]}(\log \lfloor D\rfloor)$ is locally free and locally generated by closed forms.
 Let $\pi:\tilde X\rightarrow X$ be any log resolution of the pair $(X,D)$, let $E$ be the exceptional divisor of $\pi$ and $\overline{D}$ the strict transform of $D$.
 
 Then every logarithmic $1$-from on $(X,D)$ extends to a logarithmic $1$-from on $(\tilde X,\tilde D)$, where $\tilde D=E+\overline{D}$.
\end{cor}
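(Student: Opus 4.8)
The plan is to reduce the statement to one that is local on $X$ near a point $p$, to pick there a particularly adapted local basis of closed logarithmic $1$-forms, and then to produce the extension over $\pi^{-1}(U)$ by a direct application of Lemma~\ref{lemma: extension for dual forms} to the pair $(\tilde X,\tilde D)$. It suffices to treat the functorial log resolution: for an arbitrary log resolution one compares it with the functorial one via a common resolution, using that logarithmic $1$-forms pull back to logarithmic $1$-forms under morphisms of snc pairs and descend along birational morphisms by reflexivity. Since the assertion is local on $X$ and a reflexive logarithmic $1$-form on $\tilde X$ is determined by its restriction to the dense open set on which $\pi$ is an isomorphism, local extensions are unique and glue automatically. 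As $\Omega_X^{[1]}(\log\lfloor D\rfloor)$ is locally free and locally generated by closed forms, after replacing $X$ by a small neighbourhood $U$ of $p$ it is enough to extend the members of a basis of closed logarithmic $1$-forms, because any logarithmic $1$-form on $U$ is an $\mathcal O_U$-linear combination of such a basis and regular functions pull back to regular functions on $\pi^{-1}(U)$.

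The heart of the argument is the choice of basis. Let $D_1,\dots,D_m$ be the components of $\lfloor D\rfloor$ through $p$. By Proposition~\ref{prop: normality} each $D_i$ is normal, and by Lemma~\ref{lemma: irreducible D} smooth away from the other components, so $\bigcap_{i=1}^m D_i$ contains points arbitrarily close to $p$ at which $X$ and all the $D_i$ are smooth and cross normally. At such a point the forms $dh_i/h_i$ ($h_i$ a local equation of $D_i$) are logarithmic, hence $\mathcal O$-combinations of the closed generators; taking residues along the $D_l$ and using that residues of closed forms are constants (Remark~\ref{rmk: residues of closed forms}) exhibits the standard basis vectors of $\bigoplus_{i=1}^m\mathbb C$ in the image of the local residue map $\Omega_X^{[1]}(\log\lfloor D\rfloor)_p\to\bigoplus_{i=1}^m\mathbb C$, so this map is surjective. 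A $\mathbb C$-linear change of basis then yields closed logarithmic $1$-forms $\sigma_1,\dots,\sigma_n$ on $U$ with $\mathrm{res}_{D_i}(\sigma_j)=2\pi i\,\delta_{ij}$ for $i,j\le m$ and $\mathrm{res}_{D_i}(\sigma_j)=0$ for all $i$ and $j>m$. In particular $\sigma_{m+1},\dots,\sigma_n$ are regular closed $1$-forms, and by Lemma~\ref{lemma: geometry of globalisation}~(1)--(2), applied for each $j\le m$ with $\sigma_j$ playing the role of the residue form along $D_j$, each dual logarithmic vector field $\xi_1,\dots,\xi_m$ is induced by an $S^1$-action on $U$.

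Now I would pass to $\pi^{-1}(U)$. By \cite[Theorem~1.2]{JoerderWLZ} the regular closed forms $\sigma_{m+1},\dots,\sigma_n$ extend to regular, hence logarithmic, closed forms $\tilde\sigma_{m+1},\dots,\tilde\sigma_n$ on $\pi^{-1}(U)$; the pull-backs $\pi^{*}\sigma_1,\dots,\pi^{*}\sigma_m$ are closed and, being pull-backs of reflexive forms under a birational morphism with smooth source, regular on $\pi^{-1}(U)\setminus\lfloor\tilde D\rfloor$ after extending across an indeterminacy locus of codimension $\geq 2$. By Proposition~\ref{prop: log resolution} the vector fields $\xi_1,\dots,\xi_n$ lift to pairwise commuting vector fields $\tilde\xi_1,\dots,\tilde\xi_n$ on $\pi^{-1}(U)$, logarithmic with respect to $\lfloor\tilde D\rfloor$, and the same flow-lifting procedure applied to the $S^1$-actions inducing $\xi_1,\dots,\xi_m$ produces $S^1$-actions on $\pi^{-1}(U)$ inducing $\tilde\xi_1,\dots,\tilde\xi_m$; moreover $\pi^{*}\sigma_i(\tilde\xi_j)=\sigma_i(\xi_j)=\delta_{ij}$ on the isomorphism locus and hence everywhere, and $\pi^{*}\sigma_1,\dots,\pi^{*}\sigma_m,\tilde\sigma_{m+1},\dots,\tilde\sigma_n$ span $\Omega^{[1]}_{\tilde X}$ away from $\lfloor\tilde D\rfloor$ (checked on the isomorphism locus and propagated across codimension $\geq 2$). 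Thus the hypotheses of Lemma~\ref{lemma: extension for dual forms} hold for $(\pi^{-1}(U),\tilde D)$ with $k=m$, and the lemma shows that $\pi^{*}\sigma_1,\dots,\pi^{*}\sigma_m$ extend to logarithmic $1$-forms on $(\pi^{-1}(U),\tilde D)$; together with $\tilde\sigma_{m+1},\dots,\tilde\sigma_n$ this extends every logarithmic $1$-form on $(U,D|_U)$, and gluing over a cover of $X$ finishes the proof.

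The genuine difficulty is the surjectivity of the local residue map used in the second paragraph: it is not a formal consequence of the existence of a logarithmic form with non-trivial residue along each individual $D_i$ (a subspace of $\mathbb C^m$ meeting no coordinate hyperplane need not be all of $\mathbb C^m$), and it really uses the geometric input from Proposition~\ref{prop: normality} and Lemma~\ref{lemma: irreducible D}. A secondary, more routine point is to verify that the lifted $S^1$-actions genuinely induce the vector fields $\tilde\xi_1,\dots,\tilde\xi_m$ rather than vector fields merely agreeing with them on a dense open set; this holds because action and generator arise from their counterparts on $U$ through the same functorial resolution and flow construction of Proposition~\ref{prop: log resolution}.
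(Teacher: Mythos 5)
Your overall strategy --- normalise the residues of a closed local basis, extend the regular members via \cite[Theorem~1.2]{JoerderWLZ}, produce $S^1$-actions for the polar members via Lemma~\ref{lemma: geometry of globalisation}, lift everything to the resolution and invoke Lemma~\ref{lemma: extension for dual forms} --- is exactly the paper's. But the step you yourself single out as the crux is wrong: the local residue map $\Omega_X^{[1]}(\log\lfloor D\rfloor)_p\to\bigoplus_{i=1}^m\mathbb C$ need not be surjective when $m$ is the number of components of $\lfloor D\rfloor$ through $p$. Take $X=\{xy=zw\}\subset\mathbb A^4$ with $D$ its toric boundary: four components pass through the origin while the sheaf has rank $3$, so the residue matrix is $4\times 3$ and its image is a proper subspace of $\mathbb C^4$; yet this pair satisfies every hypothesis of the corollary. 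Correspondingly, your supporting claim that $\bigcap_{i=1}^m D_i$ contains points arbitrarily close to $p$ at which $X$ and all the $D_i$ are smooth and cross normally fails there (the intersection of all four components is the singular point of $X$), and it does not follow from Proposition~\ref{prop: normality} and Lemma~\ref{lemma: irreducible D}, which control each $D_i$ only away from the other components. In effect you are assuming at the deepest stratum the normal-crossing structure that this whole section is trying to establish, and your basis with $\mathrm{res}_{D_i}(\sigma_j)=2\pi i\,\delta_{ij}$ for all $i,j\le m$ simply does not exist in general.

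The repair is the weaker normalisation the paper uses. Since residues of closed forms are constants (Remark~\ref{rmk: residues of closed forms}), the $k\times n$ residue matrix $A$ can be put into column echelon form by a constant linear change of the $\sigma_j$ and a relabelling of the $D_i$: with $l=\mathrm{rk}(A)\le n$ one arranges $\mathrm{res}_{D_i}(\sigma_i)=2\pi i$ and $\mathrm{res}_{D_i}(\sigma_j)=0$ for $i\le l$, $j\neq i$, and $\mathrm{res}_{D_i}(\sigma_j)=0$ for all $i$ when $j>l$, while the entries $\mathrm{res}_{D_i}(\sigma_j)$ with $i>l$, $j\le l$ remain arbitrary. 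This weaker statement is all the rest of your argument needs: Lemma~\ref{lemma: geometry of globalisation}~(2) only requires the residues along the \emph{single} component $D_i$ to single out $\sigma_i$ in order to produce the $S^1$-action inducing $\xi_i$, and $\sigma_{l+1},\dots,\sigma_n$ are still regular and closed. With $l$ in place of $m$ your second and third paragraphs then go through and reproduce the paper's proof.
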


\begin{proof}
Let $D_1,\ldots, D_k$ denote the irreducible components of $\lfloor D\rfloor$, $\lfloor D\rfloor=D_1+\ldots +D_k$. 
Since the statement is local, it is enough to prove the statement for a neighbourhood of a point $p\in D_1\cap\ldots\cap D_k$.
Let  $\sigma_1,\ldots, \sigma_n$ be closed logarithmic $1$-forms which span $\Omega_X^{[1]}(\log \lfloor D\rfloor)$ in a neighbourhood of $p$.

Let $$A=(\mathrm{res}_{D_i}(\sigma_j))_{1\leq i\leq k, 1\leq j\leq n}$$ be
the $k\times n$-matrix whose entry at the position $(i,j)$ is
the residue of $\sigma_j$ along the divisor~$D_i$.
Note that by Remark~\ref{rmk: residues of closed forms} all entries of $A$ are complex numbers.
After relabelling the indices of the $D_i$'s and passing to a linear combination of $\sigma_1,\ldots,\sigma_n$
we may assume that there is $l\leq n$ such that 
$\mathrm{res}_{D_i}(\sigma_i)=2\pi i$ and $\mathrm{res}_{D_i}(\sigma_j)=0$ for all $i\leq l$ and all $j\neq i$, and
 $\mathrm{res}_{D_i}(\sigma_j)=0$ for all $j>l$, i.e.\ 
$\sigma_{l+1},\ldots,\sigma_n$ are regular.
By \cite[Theorem~1.2]{JoerderWLZ} we already know then that
$\sigma_{l+1},\ldots, \sigma_n$ extend to regular $1$-forms on $\tilde X$.

Let $\xi_1,\ldots,\xi_n$ be logarithmic vector fields dual to $\sigma_1,\ldots,\sigma_n$.
For any $i\leq l$ we have $\mathrm{res}_{D_i}(\sigma_i)=2\pi i$ and 
$\mathrm{res}_{D_i}(\sigma_j)=0$ for $j\neq i$,
and thus by Lemma~\ref{lemma: geometry of globalisation}, (2) we get 
that for $i\leq l$ there are open neighbourhoods $U_i$ of $p$ and $S^1$-actions 
$\varphi_i:S^1\times U_i\rightarrow U_i$ which induce $\xi_i$.

Let $\tilde{\xi}_1,\ldots,\tilde{\xi}_n$ denote the lifts of $\xi_1,\ldots,\xi_n$ to $\tilde X$ (cf.\ Proposition~\ref{prop: log resolution}).
The $S^1$-actions $\varphi_i$ also lift to $\tilde X$ and induce the vector fields $\tilde{\xi}_1,\ldots,\tilde{\xi}_l$.
Moreover, the vector fields $\tilde{\xi}_{l+1},\ldots, \tilde{\xi}_n$ are independent at each point since $\sigma_{l+1},\ldots,\sigma_n$ extend to regular $1$-forms on $\tilde X$.
An application of Lemma~\ref{lemma: extension for dual forms} now yields that $\sigma_1,\ldots,\sigma_l$ extend to logarithmic $1$-forms on $\tilde X$.
\end{proof}

\begin{lemma}\label{lemma: intersection is smooth}
Let $(X,D)$ be a pair such that $\Omega_X^{[1]}(\log \lfloor D\rfloor)$ is free 
and generated by the closed logarithmic $1$-forms $\sigma_1,\ldots,\sigma_n$ such that $\sigma_{l+1},\ldots,\sigma_n$ are regular.
Let $D_1,\ldots ,D_k$ be the irreducible components of $\lfloor D\rfloor$.

Then $\sigma_{l+1},\ldots,\sigma_n$ can be restricted to any intersection $D_{i_1}\cap\ldots\cap D_{i_j}$ for $i_1,\ldots, i_j\in\{1,\ldots,k\}$, i.e.\ there are regular $1$-forms $\eta_{l+1},\ldots,\eta_n$ on $D_{i_1}\cap \ldots \cap D_{i_j}$ such that $\iota^*(\sigma_i)=\eta_i$ if $\iota:D_{i_1}\cap \ldots\cap D_{i_j}\hookrightarrow X$ denotes the inclusion map.

Moreover, we have $\dim(D_1\cap\ldots\cap D_k)\geq n-l$ 
if $D_1\cap\ldots\cap D_k\neq\emptyset$,
and $D_1\cap\ldots\cap D_k$ is smooth.
\end{lemma}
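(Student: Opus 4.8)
The plan is to exploit the group actions coming from the closed logarithmic forms, much as in Lemma \ref{lemma: geometry of globalisation} and Corollary \ref{cor: extension of closed log forms}. First I would establish the restriction claim for the regular forms $\sigma_{l+1},\dots,\sigma_n$. Since $\Omega_X^{[1]}(\log\lfloor D\rfloor)$ is reflexive and $(X,\lfloor D\rfloor)$ is snc outside a closed set of codimension at least $2$, it suffices to argue on the snc locus, where each $D_i$ is smooth. On the snc locus, write $\sigma_1,\dots,\sigma_l$ in local coordinates as $\tfrac{dz_i}{z_i}$ (after a linear change, using that $\mathrm{res}_{D_i}(\sigma_i)=2\pi i$ and $\mathrm{res}_{D_i}(\sigma_j)=0$ for $j\neq i$, $i,j\leq l$, as in the proof of Lemma \ref{lemma: geometry of globalisation}(2)), while $\sigma_{l+1},\dots,\sigma_n$ are holomorphic; the dual vector fields are then $z_i\tfrac{\partial}{\partial z_i}$ for $i\leq l$ and $\tfrac{\partial}{\partial z_j}$ for $j>l$. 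Each component $D_{i_t}$ appearing in the intersection is locally $\{z_{i_t}=0\}$ for some $i_t\leq l$, and restricting a holomorphic form to a coordinate hyperplane is obviously well-defined; patching these local restrictions over the snc locus (and extending reflexively over the codimension-$2$ bad set, using that the intersection is normal by Proposition \ref{prop: normality}) gives the regular $1$-forms $\eta_{l+1},\dots,\eta_n$ on $D_{i_1}\cap\dots\cap D_{i_k}$ with $\iota^*\sigma_i=\eta_i$.

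Next I would prove the dimension bound. By Lemma \ref{lemma: closed equals commuting} the dual vector fields $\xi_1,\dots,\xi_n$ pairwise commute, and by Lemma \ref{lemma: geometry of globalisation}(2) each $\xi_i$ with $i\leq l$ is induced by an $S^1$-action near any point $p$; restricting to $D:=D_1\cap\dots\cap D_k$ (which is $S^1$-invariant, since these actions stabilise each $D_i$), the vector fields $\xi_i$, $i\leq l$, vanish along $D$ (as in the coordinate description above, $z_i\tfrac{\partial}{\partial z_i}$ vanishes on $\{z_i=0\}$), while $\xi_{l+1}|_D,\dots,\xi_n|_D$ make sense as vector fields on $D$ — dual to $\eta_{l+1},\dots,\eta_n$ — and are linearly independent at each point of $D$ because $\eta_i(\xi_j|_D)=\sigma_i(\xi_j)=\delta_{ij}$ for $i,j>l$ on the snc locus, hence everywhere by normality/continuity. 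Thus $D$ carries $n-l$ pointwise-independent vector fields, forcing $\dim D\geq n-l$ wherever $D\neq\emptyset$.

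For smoothness of $D_1\cap\dots\cap D_k$, I would run the globalisation argument of Lemma \ref{lemma: geometry of globalisation} simultaneously for the $S^1$-actions $\varphi_1,\dots,\varphi_l$: they commute, so near any $p\in D$ they assemble into an $(S^1)^l$-action which complexifies to a linear $(\mathbb C^*)^l$-action $\psi$ on some $Y\subseteq\mathbb C^N$ with $U\hookrightarrow Y$ an open $(S^1)^l$-equivariant embedding and $U$ a closed $(S^1)^l$-invariant analytic subset of a ball, exactly as in parts (3)–(4) of that lemma (iterating the single-$\mathbb C^*$ construction, or invoking the torus version of Heinzner's theory directly). The fixed-point set of $\psi$ on $Y$ is then $D\cap U$, and it is isomorphic to the categorical quotient $Y/\!/(\mathbb C^*)^l$; that quotient is normal because $Y$ is normal (Snow, as cited), and in fact the fixed locus of a linear torus action on $\mathbb C^N$ is a \emph{linear} subspace, so $Y/\!/(\mathbb C^*)^l \cong D\cap U$ is smooth. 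I expect the main obstacle to be bookkeeping the iterated globalisation — keeping the neighbourhood $U$ invariant under \emph{all} of $\varphi_1,\dots,\varphi_l$ at once and checking the resulting complexification is still Stein with the expected fixed locus — rather than any conceptually new input; everything else is a routine reprise of the $l=1$ case already carried out in Lemma \ref{lemma: geometry of globalisation}.
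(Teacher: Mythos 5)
Your proposal has two genuine gaps.

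First, the restriction step. You define $\eta_i$ by restricting $\sigma_i$ on the snc locus of $(X,\lfloor D\rfloor)$ and then extending reflexively across the ``codimension-$2$ bad set''. But that bad set has codimension $\geq 2$ in $X$, not in the intersection: for $j\geq 2$ the set $D_{i_1}\cap\dots\cap D_{i_j}$ itself has codimension $\geq 2$ in $X$ and may be \emph{entirely} contained in the non-snc locus, in which case there is no dense open subset of the intersection on which your coordinate computation applies and nothing to extend from --- indeed $\sigma_i$, being merely reflexive, need not be defined as a genuine $1$-form at any point of the intersection, so the naive pullback $\iota^*\sigma_i$ does not exist. A concrete instance within the scope of the lemma: let $Y=\{xy=zw\}\subset\mathbb A^4$, $X=Y\times\mathbb A^1$, and $D$ the product of the toric boundary of $Y$ with $\mathbb A^1$; three of the four components meet exactly along $\{0\}\times\mathbb A^1$, which is precisely the singular locus of $X$. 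This is why the paper instead extends $\sigma_1,\dots,\sigma_n$ to a log resolution (Corollary~\ref{cor: extension of closed log forms}), restricts to the strict transform $\overline D_1$ (where the pair is genuinely snc), descends to $D_1$ using that $D_1$ is normal and $\overline D_1\to D_1$ is an isomorphism outside a set of codimension $2$ in $D_1$, and then iterates one component at a time, at each stage checking that some remaining $D_{i'}$ still cuts down the dimension by one. Your duality argument for the bound $\dim(D_1\cap\dots\cap D_k)\geq n-l$ inherits the same problem, since it needs $\eta_i(\xi_j|_D)=\delta_{ij}$ on a dense subset of the intersection.

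Second, smoothness. The inference ``the fixed locus of a linear torus action on $\mathbb C^N$ is a linear subspace, so $Y/\!/(\mathbb C^*)^l\cong D\cap U$ is smooth'' is a non sequitur: the fixed locus in $Y$ is $Y\cap L$ for that linear subspace $L$, and the intersection of the (in general singular) Stein space $Y$ with a linear subspace need not be smooth; likewise, categorical quotients of normal Stein spaces by tori are normal (Snow) but by no means smooth. Smoothness of $D_1\cap\dots\cap D_k$ is itself a Lipman--Zariski-type statement; the paper obtains it by showing that the restricted forms $\sigma_r|_{D_1\cap\dots\cap D_k},\dots,\sigma_n|_{D_1\cap\dots\cap D_k}$ form a \emph{closed} basis of $\Omega^{[1]}_{D_1\cap\dots\cap D_k}$ and then invoking the weak Lipman--Zariski theorem \cite[Theorem~1.1]{JoerderWLZ}. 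That input is missing from your argument.
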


\begin{proof}
Without loss of generality, let $D_{i_1}\cap \ldots\cap D_{i_j}=D_1\cap \ldots\cap D_j$, and suppose $D_1\cap\ldots\cap D_j\neq \emptyset$.
Recall that by Proposition~\ref{prop: normality} this intersection $D_1\cap\ldots\cap D_j$ is normal.

Let $\xi_1,\ldots,\xi_n$ be a basis of logarithmic vector fields dual to $\sigma_1,\ldots,\sigma_n$.
Since the flows of $\xi_1,\ldots,\xi_n$ stabilise each irreducible component of $\lfloor D\rfloor$, the vector fields $\xi_1,\ldots,\xi_n$ induce vector fields on $D_1\cap\ldots\cap D_j$ for any $j\leq k$.

Let $\pi:\tilde X\rightarrow X$ be a log resolution of $(X,D)$ with exceptional divisor $E$ and let 
$\overline{D}_i$ be the strict transform of $D_i$.
By Corollary~\ref{cor: extension of closed log forms}, $\sigma_1,\ldots,\sigma_n$ extend to logarithmic $1$-forms $\tilde{\sigma}_1,\ldots,\tilde{\sigma}_n$ on $\tilde X$.

We first want to restrict to $D_1$. We may assume 
$\mathrm{res}_{D_1}(\sigma_1)=1$ and $\mathrm{res}_{D_1}(\sigma_i)=0$ for $i>1$.
 Then also $\mathrm{res}_{\overline{D}_1}(\tilde{\sigma}_i)=0$ for $i>1$ and 
$\tilde{\sigma}_i$ is thus regular along $\overline{D}_1\setminus 
(E\cup \overline{D}_2\cup\ldots\cup \overline{D}_k)$.
Therefore the restriction of $\tilde{\sigma}_2,\ldots,\tilde{\sigma}_n$ to $\overline{D}_1$ yields logarithmic $1$-forms with respect to $(E+ \overline{D}_2+\ldots+ \overline{D}_k)|_{\overline{D}_1}$.
Since $D_1$ is normal, $D_1$ is isomorphic to an open subset of $\overline{D}_1$ outside a closed subset of codimension at least $2$.
Consequently, the logarithmic $1$-forms  $\tilde{\sigma}_2|_{\overline{D}_1},\ldots,\tilde{\sigma}_n|_{\overline{D}_1}$ on
the strict transform $\overline{D}_1$ induce logarithmic $1$-forms on $(D_1,(D_2+\ldots+D_k)|_{{D}_1})$
since $\Omega_{D_1}^{[1]}(\log (D_2+\ldots+D_k)|_{D_1})$ is reflexive, and these give the desired restrictions 
$\sigma_2|_{D_1},\ldots,\sigma_n|_{D_1}$ of $\sigma_2,\ldots,\sigma_n$ to $D_1$.
Moreover, the restricted vector fields $\xi_2|_{D_1},\ldots,\xi_n|_{D_1}$ are dual to these and 
hence $\sigma_2|_{D_1},\ldots,\sigma_n|_{D_1}$ yield a basis of logarithmic $1$-forms on $D_1$.
The vector fields $\xi_2|_{D_1},\ldots,\xi_n|_{D_1}$ commute and 
$\sigma_2|_{D_1},\ldots,\sigma_n|_{D_1}$ are closed.

If $k>1$ and if there is $D_i$, say $D_2=D_i$, with $\mathrm{codim}_{D_1}(D_1\cap D_i)=1$, and we apply the procedure again.
It might now happen that $D_i\cap (D_1\cap D_2)=D_1\cap D_2$ for some $i>2$. Assume $(D_1\cap D_2)\cap \ldots\cap D_{i}=D_1\cap D_2$
and $(D_1\cap D_2 )\cap D_{i'}\subsetneq D_1\cap D_2$ for all $i'>i$.
In this case, $\sigma_3,\ldots,\sigma_n$ restrict to 
logarithmic $1$-forms of the pair $(D_1\cap \ldots\cap D_i,
(D_{i+1}+\ldots+D_k)|_{D_1\cap \ldots\cap D_i})$, and $\xi_3,\ldots,\xi_n$ induce dual logarithmic vector fields on $D_1\cap \ldots\cap D_i$.

We continue then iteratively. 
At each step either the boundary $(D_{i+1}+\ldots+D_k)|_{D_1\cap \ldots\cap D_{i'}}$ of the pair $(D',D_0)=(D_1\cap \ldots\cap D_{i},
(D_{i+1}+\ldots+D_k)|_{D_1\cap \ldots\cap D_{i}})$
is empty or
otherwise there is $i'>i$ such that $D_{i'}\cap (D_1\cap \ldots\cap D_{i})$ has codimension $1$ in $D_1\cap \ldots\cap D_{i}$
as explained in the following:

By construction we have $D_{i'}\cap (D_1\cap \ldots\cap D_{i})\neq D'=D_1\cap \ldots\cap D_{i}$ and thus the codimension is at least $1$. 
On $D'=D_1\cap \ldots\cap D_{i}$ we have the restricted logarithmic $1$-forms $\sigma_r|_{D'},\ldots,\sigma_n|_{D'}$ and
dual logarithmic vector fields $\xi_r|_{D'},\ldots,\xi_n|_{D'}$,
where $r\leq i+1$, $r-1=\mathrm{codim}_X (D')$.
If the codimension of $D_{i'}\cap D'$ in $D'$ is at least $2$ for all $i'>i$, then
the logarithmic $1$-forms $\sigma_r|_{D'},\ldots,\sigma_n|_{D'}$ are regular since $D'$ is normal. But this is in contradiction to the fact that the vector fields $\xi_r|_{D'},\ldots,\xi_n|_{D'}$ stabilise each $D_{i'}$.

This procedure eventually gives rise to
regular $1$-forms $\sigma_r|_{D_1\cap \ldots\cap D_k},\ldots,
\sigma_n|_{D_1\cap \ldots\cap D_k}$ on $D_1\cap \ldots\cap D_k$, $r\leq l+1$ and dual vector fields $\xi_r|_{D_1\cap \ldots\cap D_k},\ldots \xi_n|_{D_1\cap \ldots\cap D_k}$.
Hence, we have $\dim (D_1\cap \ldots\cap D_k)=n-r+1\geq n-(l+1)-1=n-l$.
Furthermore, $D_1\cap \ldots\cap D_k$ is smooth by  \cite[Theorem~1.1]{JoerderWLZ} since
 $\sigma_r|_{D_1\cap \ldots\cap D_k},\ldots ,
\sigma_n|_{D_1\cap \ldots\cap D_k}$ are a basis for $\Omega_{D_1\cap\ldots\cap D_k}^ {[1]}$ and each  $\sigma_j|_{D_1\cap \ldots\cap D_k}$ is closed.
\end{proof}

\begin{rmk}\label{rmk: residue matrix}
In the setting of the previous lemma and its proof, we also get 
that $r=\mathrm{rk}(A)+1$ and hence
$$\dim (D_1\cap \ldots\cap D_k)=n-r+1=n-\mathrm{rk}(A),$$
where 
$$A=(\mathrm{res}_{D_i}(\sigma_j))_{1\leq i\leq k,1\leq j\leq n}$$ is the matrix of residues as before.
\end{rmk}

\begin{thm}\label{thm: closed one forms}
 Let $(X,D)$ be a pair such that $\Omega_X^{[1]}(\log \lfloor D\rfloor)$ is locally free and locally generated by closed logarithmic $1$-forms.
 Then $(X,\lfloor D\rfloor)$ is toroidal, i.e.\ for any point there is a neighbourhood $U\subseteq X$ which is isomorphic to an open subset of a toric variety $Y$ with open $(\mathbb C^*)^n$-orbit $T$,
 and the divisor $\lfloor D\rfloor$ corresponds to the complement $Y\setminus T$ of $T$ in $Y$.
\end{thm}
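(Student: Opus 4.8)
The plan is to reduce the general toroidal statement to the local model already analysed in Lemma~\ref{lemma: geometry of globalisation} by induction on the number of components of $\lfloor D\rfloor$ passing through a given point, peeling off one divisor at a time exactly as in Lemma~\ref{lemma: intersection is smooth}. Fix $p\in X$ and let $D_1,\dots,D_m$ be the components of $\lfloor D\rfloor$ through $p$; after shrinking we may assume $\Omega_X^{[1]}(\log\lfloor D\rfloor)$ is free and generated by closed $1$-forms $\sigma_1,\dots,\sigma_n$. Passing to a $\mathbb C$-linear combination (the residues are constants by Remark~\ref{rmk: residues of closed forms}) we may arrange that the residue matrix $A=(\mathrm{res}_{D_i}(\sigma_j))$ is in the normal form of Corollary~\ref{cor: extension of closed log forms}: $\mathrm{res}_{D_i}(\sigma_i)=2\pi i$, $\mathrm{res}_{D_i}(\sigma_j)=0$ for $j\ne i$ and $i\le l=\mathrm{rk}(A)=m$ (one checks $l=m$ here because distinct components contribute independent residues, using that the $\sigma_i$ are a basis), and $\sigma_{m+1},\dots,\sigma_n$ regular.

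The core construction is to glue the $S^1$-complexifications from Lemma~\ref{lemma: geometry of globalisation}. For each $i\le m$ the vector field $\xi_i$ dual to $\sigma_i$ is induced by an $S^1$-action $\varphi_i$ on a neighbourhood of $p$ (Lemma~\ref{lemma: geometry of globalisation}(2)); by Lemma~\ref{lemma: closed equals commuting} the $\xi_i$ pairwise commute, so the $\varphi_i$ commute and assemble to an $(S^1)^m$-action $\varphi$ fixing $p$ (because $p\in D_1\cap\dots\cap D_m$ and each $\xi_i$ vanishes on $D_i$). Now run the averaging/embedding argument of Lemma~\ref{lemma: geometry of globalisation}(3) for the torus $(S^1)^m$ instead of a single $S^1$: linearise $\varphi$ at $p$, average an embedding against the Haar measure of $(S^1)^m$ to get an equivariant embedding $U\hookrightarrow\mathbb C^N$ on which $(S^1)^m$ acts linearly, shrink $U$ to a relatively compact $(S^1)^m$-invariant analytic subset of an invariant ball, and set $Y=(\mathbb C^*)^m\cdot U$; by \cite[\S\S3.3,6.6]{HeinznerInvariantTheory} this $Y$ is a normal Stein space, the complexification of $U$, and it carries a linear $(\mathbb C^*)^m$-action extending $\varphi$. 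This exhibits $U$ as an open $(S^1)^m$-invariant piece of the normal affine toric-type variety $Y$; together with the transverse free directions $\xi_{m+1},\dots,\xi_n$ (independent everywhere, inducing a local $\mathbb C^n$-chart off the divisor) one gets that $Y$ is in fact isomorphic, locally near $\iota(p)$, to a normal toric variety whose torus is $(\mathbb C^*)^m$ times an affine space, with the $D_i\cap U$ being the invariant prime divisors $\{z_i=0\}$ and $\lfloor D\rfloor\cap U$ the whole complement of the open orbit. The smoothness of $D_1\cap\dots\cap D_m$ from Lemma~\ref{lemma: intersection is smooth} and the normality of all the intersections from Proposition~\ref{prop: normality} are what guarantee that the combinatorial data (the weights $a_{ij}$ appearing as in the linearisation step of Lemma~\ref{lemma: extension for dual forms}) actually define a toric variety and not a quotient singularity with the wrong strata.

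The main obstacle I expect is twofold. First, one must check that after the normal-form reduction the residue rank really equals the number $m$ of components through $p$, i.e.\ that the $1$-forms $d\log h_i$ cutting out the $D_i$ are "seen" independently by the basis $\sigma_1,\dots,\sigma_n$; this uses that the $\sigma_i$ span $\Omega_X^{[1]}(\log\lfloor D\rfloor)$ and that the pair is snc in codimension one, combined with Lemma~\ref{lemma: intersection is smooth}/Remark~\ref{rmk: residue matrix}. Second, and more seriously, one has to identify the Stein space $Y=(\mathbb C^*)^m\cdot U$ with an \emph{open subset of a toric variety} rather than merely an abstract normal $(\mathbb C^*)^m$-space: the point is that the linearised $(\mathbb C^*)^m$-action on $\mathbb C^N$ has weights, the fixed/attracting analysis of Lemma~\ref{lemma: geometry of globalisation}(4) (via \cite[Theorems 5.3, 6.2]{Snow}) identifies the categorical quotient with the common fixed locus $D_1\cap\dots\cap D_m\cap U$, which is smooth, and then one builds local toric coordinates by combining the $m$ torus-directions $z_i$ with the $n-m$ free coordinates coming from $\sigma_{m+1},\dots,\sigma_n$, so that $(U,\lfloor D\rfloor\cap U)\cong(Y',Y'\setminus T')$ with $Y'$ toric. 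Making this identification precise — in particular checking that no finite isotropy appears transverse to the fixed locus, which is exactly where the normality of the intermediate intersections in Proposition~\ref{prop: normality} and the Frobenius coordinates of Lemma~\ref{lemma: extension for dual forms} are used — is the technical heart of the argument.
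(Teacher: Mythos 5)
Your overall strategy is the paper's: use Lemma~\ref{lemma: geometry of globalisation} to produce commuting $S^1$-actions from the residue-normalised basis, complexify via Heinzner's theory to a normal Stein space $Y=(\mathbb C^*)^\bullet\cdot U$ with a linear torus action, and combine with the everywhere-independent dual vector fields of the regular forms $\sigma_{l+1},\ldots,\sigma_n$. However, there are two genuine problems. First, your claim that $l=\mathrm{rk}(A)$ equals the number $m$ of components of $\lfloor D\rfloor$ through $p$ is false: by Remark~\ref{rmk: residue matrix} one has $\mathrm{rk}(A)=\operatorname{codim}(D_1\cap\ldots\cap D_m)\leq n$, and $m$ can exceed $n$. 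For instance, the affine cone $\{xy=zw\}\subset\mathbb C^4$ with its toric boundary satisfies all hypotheses of the theorem, has $n=3$, and has four boundary components through the vertex; the residue matrix is $4\times 3$ of rank $3$, so the diagonal normal form $\mathrm{res}_{D_i}(\sigma_j)=2\pi i\,\delta_{ij}$ for all $i\leq m$ that you posit does not exist, and the torus you build should have dimension $l$, not $m$. (The paper's normal form correctly keeps a residual block $B$ below the $l\times l$ identity block.)

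Second, and more seriously, the step you yourself flag as ``the technical heart'' — showing that $Y$ is actually an open piece of a \emph{toric} variety rather than merely a normal Stein space with a torus action — is asserted but not carried out, and the mechanism you point to (absence of finite isotropy transverse to the fixed locus) is not what closes the gap. What is needed is that the open orbit is \emph{dense} in $Y$, and this only makes sense once the acting torus has full dimension $n$. The paper therefore first reduces to the case $D_1\cap\ldots\cap D_k=\{p\}$ (where $l=n$) by an explicit product decomposition: the flows of $\xi_{l+1},\ldots,\xi_n$ and of the induced vector fields on the quotient $Y/\!/(\mathbb C^*)^l\cong D_1\cap\ldots\cap D_k\cap U$ give a local isomorphism $S\times X'\to X$ with $X'$ an open neighbourhood of $p$ in the fibre $\pi^{-1}(p)$, and the zero-dimensional case is then applied to $X'$. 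In that case the density of $\overline{(\mathbb C^*)^n\cdot q}$ for $q\in U\setminus\lfloor D\rfloor$ is proved via orbit-convexity of the invariant ball: every orbit in the closure meets $B_N$, hence meets $((\mathbb C^*)^n\cdot U)\cap B_N=U$, forcing $\overline{(\mathbb C^*)^n\cdot q}=Y$. Without this density argument and the preceding dimension reduction, your proof does not establish that $Y$ is toric, so as written the proposal has a real gap precisely at the decisive step.
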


\begin{proof}
 Let $D_1,\ldots, D_k$ denote the irreducible components of $\lfloor D\rfloor$, and let $p\in X$. 
 Since the statement of the theorem is local, we may assume that $\Omega_X^{[1]}(\log \lfloor D\rfloor)$ is free and generated by the closed logarithmic $1$-forms $\sigma_1,\ldots,\sigma_n$, 
 and that $p\in D_1\cap \ldots \cap D_k$.
 
 We first consider the case where $D_1\cap\ldots\cap D_k=\{p\}$.
 Let 
 $$A=(\mathrm{res}_{D_i}(\sigma_j))_{1\leq i\leq k, 1\leq j \leq n}$$
 denote again the residue matrix of the forms $\sigma_1,\ldots,\sigma_n$.
 Since we assumed $\dim (D_1\cap \ldots\cap D_k)=0$, we have $\mathrm{rk}(A)=n$ by Remark~\ref{rmk: residue matrix}.
 In particular, there are at least $n=\dim X$ irreducible components of $\lfloor D\rfloor$ containing the point $p$,
 and without loss of generality we may assume 
 that $A$ is of the form
 
 $$A=(\mathrm{res}_{D_i}(\sigma_j))_{1\leq i\leq k, 1\leq j \leq n}=\left(\begin{smallmatrix}
                                                                           2\pi i & & \\
                                                                           &\ddots & \\
                                                                           && 2\pi i\\ 
                                                                           &&\\
                                                                           \hline \\                                                                          
                                                                           &&\\
                                                                           &\displaystyle B & \\
                                                                           &&
                                                                          \end{smallmatrix}\right),$$
where $B$ is an arbitrary $(k-n)\times n$-matrix.
Let $\xi_1,\ldots,\xi_n$ be a basis of logarithmic vector fields dual to $\sigma_1,\ldots,\sigma_n$.
By Lemma~\ref{lemma: geometry of globalisation} (2) there is an open neighbourhood~$U_j$ of $p$ for any $j=1,\ldots, n$ 
and an
$S^1$-action $\varphi_j:S^1\times U_j\rightarrow U_j$ which induces the vector field $\xi_j$ on $U_j$ and such that $p$ is a fixed point of this $S^1$-action.

There is a neighbourhood $U'$ of $p$ such that $\varphi_1,\ldots, \varphi_n$
define a map $\varphi:(S^1)^n\times U'\rightarrow X$ by setting
$$\varphi\left(\left(\begin{smallmatrix}  s_1\\ \vdots\\  \\ s_n  \end{smallmatrix} \right), q\right) =\varphi_1(s_1,\varphi_2(s_2,\ldots \varphi_n(s_n,q)\ldots )$$
 for $\left(\begin{smallmatrix}   s_1\\ \vdots\\  \\ s_n \end{smallmatrix}   \right)\in (S^1)^n$ and $q\in U'$.
Moreover, since the vector fields $\xi_1,\ldots,\xi_n$ and hence the $S^1$-actions $\varphi_1,\ldots,\varphi_n$ all commute,
there is an open neighbourhood $U$ of $p$ such that $\varphi:(S^1)^n\times U\rightarrow U$ is a $(S^1)^n$-action; set e.g.\ 
$$U=\bigcap_{t\in (S^1)^n} \varphi(\{t\}\times U')$$
and note that $U$ is open since $(S^1)^n$ is compact and $U$ contains $p$ since $\varphi((S^1)^n\times \{p\})=\{p\}$.

By the same arguments as used in the proof of Lemma~\ref{lemma: geometry of globalisation} (3), we can shrink $U$ such that 
there is a normal Stein space $Y\subseteq \mathbb C^N$ with a holomorphic $(\mathbb C^*)^n$-action 
$\psi:(\mathbb C^*)^n\times Y\rightarrow Y$ which is induced by a linear $(\mathbb C^*)^n$-action on $\mathbb C^N$ and such that
there is an open equivariant embedding $\iota:U\hookrightarrow Y$ with $Y=\psi((\mathbb C^*)^n\times \iota(U))$, and we identify again $U$ and $\iota (U)$.
Moreover, we may assume that $U$ is a closed analytic subset of the open unit ball $B_N=\{z\in\mathbb C^N\,|\,\langle z, z\rangle <1\}$
with respect to an $(S^1)^n$-invariant hermitian inner product $\langle \, , \rangle$.

Let $q\in U\setminus \lfloor D\rfloor$ and consider the orbit $(\mathbb C^*)^n\cdot q=\psi((\mathbb C^*)^n\times \{q\})$, which is open and dense in $Y$.
The unique closed orbit in its closure 
$\overline{(\mathbb C^*)^n\cdot q}$ in the ambient space 
$\mathbb C^N$ is $0$. 
Thus every orbit $(\mathbb C^*)^n\cdot x$ with $x\in \overline{(\mathbb C^*)^n\cdot q}$ contains $0$ in its closure
and there is $x'\in B_N$ with $(\mathbb C^*)^n\cdot x=(\mathbb C^*)^n\cdot x'$.
Since $U\subset B_N$ is analytic and $(S^1)^n$-invariant
and $B_N$ is orbit-convex, we have
$((\mathbb C^*)^n\cdot U)\cap B_N=U$ (cf.\ \cite[$\S$ 3.3 Corollary]{HeinznerInvariantTheory})
and then $$((\mathbb C^*)^n\cdot q) \cap B_N\subset ((\mathbb C^*)^n\cdot U)\cap B_N=U.$$
This implies $x'\in U$ and hence $\overline{(\mathbb C^*)^n\cdot q}=Y$.
Consequently, $Y$ is an affine toric variety.

Now, let $\dim (D_1\cap \ldots\cap D_k)$ be arbitrary. The intersection $D_1\cap \ldots\cap D_k$ is smooth by Lemma~\ref{lemma: intersection is smooth} and we 
have $\dim (D_1\cap \ldots\cap D_k)=n-\mathrm{rk}(A)$ by Remark~\ref{rmk: residue matrix}.
Thus we may assume that $\sigma_{l+1},\ldots,\sigma_n$, where $l=\mathrm{rk}(A)$, are regular $1$-forms and 
$\mathrm{res}_{D_i}(\sigma_j)=2\pi i \delta_{ij}$ for $i,j\leq l$.
Let $p\in D_1\cap \ldots \cap D_k$ and let $\xi_1,\ldots,\xi_n$ denote again the logarithmic vector fields dual to $\sigma_1,\ldots,\sigma_n$. 
Applying Lemma~\ref{lemma: geometry of globalisation} (2) to the vector fields $\xi_1,\ldots,\xi_l$, 
we get commuting 
$S^1$-actions $\varphi_j:S^1\times U_j\rightarrow U_j$ on some neighbourhood $U_j$ of $p$, $j=1,\ldots, l$, which induce the vector fields $\xi_j$.
This gives now rise to 
an $(S^1)^l$-action $\varphi:(S^1)^l\times U\rightarrow U$ on some neighbourhood $U$ of $p$.
As before (cf.\ Lemma~\ref{lemma: geometry of globalisation} (3)) we may globalise the corresponding local $(\mathbb C^*)^l$-action 
and get that there are a normal complex Stein space $Y\subseteq \mathbb C^N$ with a linear $(\mathbb C^*)^l$-action $\psi:(\mathbb C^*)^l\times Y\rightarrow Y$
and an equivariant open embedding $\iota:U\hookrightarrow Y$.
Identifying $U$ and its image $\iota(U)$ we have 
that the set $A$ of fixed points of the $(\mathbb C^*)^l$-action~$\psi$ in $Y$ is precisely $A=U\cap (D_1\cap\ldots\cap D_k)$ 
and moreover $A$ is isomorphic to $Y/\!/ (\mathbb C^*)^l$ if
$\pi:Y\rightarrow Y/\!/(\mathbb C^*)^l$ denotes the categorical quotient of $Y$ by the action $\psi$.
The vector fields $\xi_{l+1},\ldots,\xi_n$ induce commuting and independent vector fields on $D_1\cap\ldots\cap D_k$, and since they also commute 
with $\xi_1,\ldots,\xi_l$, they induce 
vector fields $\hat\xi_{l+1},\ldots,\hat\xi_n$ on the quotient $Y/\!/(\mathbb C^*)^l$ with
$\xi_j\circ \pi^*=\pi^*\circ\hat\xi_j$ for $j=l+1,\ldots, n$.
The fibre $\pi^{-1}(p)$ of $p\in U\cap D_1\cap\ldots\cap D_k\cong Y/\!/(\mathbb C^*)^l$ 
is $l$-dimensional and the flows of $\xi_1,\ldots,\xi_l$ stabilise $\pi^{-1}(p)$ by construction such that 
$\xi_1,\ldots,\xi_l$ induce commuting vector fields on $\pi^{-1}(p)$.
The flows of $\xi_{l+1},\ldots,\xi_n$ and $\hat\xi_{l+1},\ldots,\hat\xi_n$ now induce a local isomorphism
$\chi:S\times X'\rightarrow X$ onto its image, where $S$ is an open neighbourhood of $p$ in $U\cap D_1\cap\ldots\cap D_k\cong Y/\!/(\mathbb C^*)^l$ 
and $X'$ an open neighbourhood of $p$ in $\pi^{-1}(p)$.
Since $U\cap D_1\cap\ldots\cap D_k$ is smooth, $S$ is also smooth and the divisors $D_1,\ldots,D_k$ induce divisors
$D_1|_{X'},\ldots,D_k|_{X'}$ on $X'$.
Moreover, we may restrict $\sigma_1,\ldots,\sigma_l$ to $X'$, they are dual to $\xi_1|_{X'},\ldots,\xi_l|_{X'}$ and thus give
rise to a basis of closed logarithmic $1$-forms $\sigma_1|_{X'},\ldots,\sigma_l|_{X'}$ of $\Omega_{X'}(\log \lfloor D\rfloor|_{X'})$.
The intersection of the divisors $D_j|_{X'}$ is now $D_1|_{X'}\cap\ldots\cap D_k|_{X'}=\{p\}$ and applying the above arguments to $X'$
we conclude that $X'$ is toroidal. Consequently, $S\times X'$, which is isormorphic to a neighbourhood of $p$ in $X$, is toroidal.
\end{proof}

\section{Lc pairs with (locally) free sheaf of logarithmic $1$-forms}\label{section: lc pairs}
In this section the case of an lc pair $(X,D)$ with (locally) free sheaf of logarithmic $1$-forms is considered.

As already noted in Examples~\ref{ex: nodal curve} and \ref{ex: toric variety} we cannot expect that $X$ is smooth in this case.
However, the singularities in these examples are contained in the support of $\lfloor D\rfloor$, and this is true in general.
Since the Lipman-Zariski conjecture holds for lc pairs (see \cite[Corollary~1.3]{GrafKovacs} or \cite[Theorem~1.1]{Druel}), we have the following:
\begin{rmk}
If $(X,D)$ is lc and $\Omega_X^{[1]}(\log \lfloor D\rfloor)$ is locally free,
then $X\setminus \lfloor D\rfloor$ is smooth since the sheaf of $1$-forms and the sheaf of logarithmic $1$-forms agree on $X\setminus  \lfloor D\rfloor$.
\end{rmk}

In the following, we first consider the case of an lc pair $(X,D)$ where $X$ is projective and the sheaf of logarithmic $1$-forms is free.
Then we deal with the case of a (not necessarily projective) lc pair $(X,D)$ with locally free sheaf of logarithmic $1$-forms. The goal is to prove that $(X,D)$ is toroidal by reducing to the case where the sheaf of logarithmic $1$-forms is spanned by closed forms 
as in the previous section.

\subsection{Lc pairs with free sheaf of logarithmic $1$-forms}

Let us consider the case of an lc pair $(X,D)$ such that its sheaf of logarithmic $1$-forms is free and assume additionally that $X$ is projective.

In the case of a smooth compact K\"ahler (or weakly K\"ahler) manifold $X$ and an snc divisor~$D$, 
Winkelmann described precisely under which conditions the logarithmic tangent bundle is trivial. In particular, the following result for smooth  projective varieties is obtained.

\begin{thm}[{\cite[Corollary 1]{WinkelmannTrivialLogBundle}}]\label{thm: thmWinkelmann}
Let $X$ be a smooth projective variety and $D$ a reduced snc divisor on $X$.
Then $\mathcal T_X(-\log D)$ is a free sheaf if and only if
there is a semi-abelian variety~$T$ acting on $X$ with $X\setminus D$ as an open orbit.\qed
\end{thm}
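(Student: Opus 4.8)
The plan is to prove the two implications separately, the substantial one being that triviality of $\mathcal T_X(-\log D)$ forces a semi-abelian action. For the easy direction, suppose a semi-abelian variety $T$ with $\dim T=n=\dim X$ acts on $X$ with $X\setminus D$ as an open orbit. Differentiating the action yields a morphism $\mathfrak t\otimes_{\mathbb C}\mathcal O_X\to\mathcal T_X$, $v\otimes 1\mapsto\xi_v$; since $D=X\setminus(X\setminus D)$ is $T$-invariant, each $\xi_v$ has a flow stabilising $D$ and is therefore logarithmic, so the map factors as $\alpha\colon\mathcal O_X^{\oplus n}\to\mathcal T_X(-\log D)$. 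On the open orbit the stabilisers are finite, hence have trivial Lie algebra, so the infinitesimal action is pointwise injective there; by a dimension count $\alpha$ is an isomorphism over $X\setminus D$. To conclude that $\alpha$ is an isomorphism everywhere it suffices (both sheaves being locally free of rank $n$) to see that $\det\alpha$ does not vanish along $D$, which is a local statement; near a boundary point the toroidal local model of an equivariant compactification of a semi-abelian variety identifies $(X,D)$ with a product of a polydisc, a torus $(\mathbb C^*)^m$ and a coordinate cross $\{z_1\cdots z_k=0\}\subseteq\mathbb C^k$, in which a suitable basis of $\mathfrak t$ acts by $z_1\partial_{z_1},\dots,z_k\partial_{z_k},\partial_{z_{k+1}},\dots$, visibly a local frame of $\mathcal T_X(-\log D)$.

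For the converse, I would fix a global frame $\xi_1,\dots,\xi_n$ of $\mathcal T_X(-\log D)$ (available since the sheaf is globally free) with dual frame $\omega_1,\dots,\omega_n$ of $\Omega^1_X(\log D)$. Since $X$ is projective, $H^0(X,\mathcal O_X)=\mathbb C$, so $[\xi_i,\xi_j]=\sum_k c_{ij}^k\xi_k$ with \emph{constant} coefficients $c_{ij}^k$, making $\mathfrak g:=\bigoplus_k\mathbb C\xi_k$ an $n$-dimensional complex Lie algebra acting on $X$. The first key input is Hodge-theoretic: by Deligne's theory of mixed Hodge structures on $X\setminus D$ (equivalently, the $E_1$-degeneration of the logarithmic Hodge--de Rham spectral sequence), every global logarithmic $1$-form on the smooth projective snc pair $(X,D)$ is closed. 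Since $d\omega_i(\xi_j,\xi_k)=-\omega_i([\xi_j,\xi_k])=-c_{jk}^i$, closedness of the $\omega_i$ forces all $c_{jk}^i=0$, i.e.\ $\mathfrak g$ is abelian.

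Because $X$ is compact the $\xi_i$ are complete, so $\mathfrak g$ integrates to an action of a connected commutative algebraic group $G\subseteq\Aut^0(X)$ with $\operatorname{Lie}G=\mathfrak g$ and $\dim G=n$. As $\xi_1,\dots,\xi_n$ span $\mathcal T_X=\mathcal T_X(-\log D)$ over the connected set $X\setminus D$, the action is locally transitive there, so $X\setminus D$ is a single $G$-orbit, $D$ is $G$-invariant, and the stabiliser of a point of $X\setminus D$ is finite. It remains to show $G$ is semi-abelian. By Chevalley's structure theorem $G$ is an extension $0\to L\to G\to A\to 0$ of an abelian variety $A$ by a connected commutative linear group $L\cong\mathbb G_a^{\,r}\times\mathbb G_m^{\,s}$, and I must rule out $r>0$. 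If $\mathbb G_a\hookrightarrow G$, its orbit through a point of $X\setminus D$ is isomorphic to $\mathbb A^1$ (finite subgroups of $\mathbb G_a$ are trivial in characteristic $0$); the closure of this orbit in $X$ is a projective curve whose normalisation $\widetilde O$ is not contained in $D$, so pulling back any $\omega_i$ to $\widetilde O$ produces at worst simple poles at the points over $D$. But some $\omega_i$, being $G$-invariant hence $\mathbb G_a$-invariant, restricts on the orbit to a nonzero multiple of the translation-invariant form $dt$ on $\mathbb A^1$ (otherwise the $\mathbb G_a$-direction would lie in $\bigcap_i\ker\omega_i=0$), and $dt$ has a pole of order $\ge 2$ at the points of $\widetilde O$ added in the compactification --- a contradiction. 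Hence $r=0$, $L$ is a torus, and $G$ is a semi-abelian variety acting on $X$ with $X\setminus D$ as an open orbit.

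I expect the main difficulty to lie entirely in the converse: on one hand in invoking the Hodge-theoretic fact that global logarithmic $1$-forms on a smooth projective snc pair are closed, and on the other in the exclusion of additive factors, which requires passing cleanly from the Lie-algebra action to an honest algebraic group action on $X$ and controlling the behaviour of the invariant forms on the compactifying curves. The forward implication is, by comparison, a routine local verification once the toroidal structure of the compactification is in hand.
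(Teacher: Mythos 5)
The paper offers no proof of this statement: it is quoted from \cite[Corollary~1]{WinkelmannTrivialLogBundle} and stated with a qed-mark, so there is nothing internal to compare against; your reconstruction follows what is essentially Winkelmann's own strategy and is sound in outline. The converse direction, which you rightly identify as the substantial one, is correct as you give it: constancy of the structure functions $c_{ij}^k$ because $h^0(\mathcal O_X)=1$, Deligne's theorem that global logarithmic $1$-forms on a projective snc pair are closed (hence $\mathfrak g$ is abelian), integration to a commutative group acting with $X\setminus D$ as an open orbit, and the exclusion of $\mathbb G_a$-factors by comparing the double pole of the invariant form $a_i\,dt$ at the point at infinity of an orbit closure with the simple poles that pullbacks of logarithmic forms can have over $D$. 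Two steps should be made explicit. First, that $\mathfrak g$ integrates to an \emph{algebraic} subgroup of $\Aut^0(X)$ is not automatic for an immersed connected commutative Lie subgroup; it holds here because the Zariski closure $G$ of that subgroup still stabilises $D$, so that $\operatorname{Lie}G\subseteq H^0(X,\mathcal T_X(-\log D))=\mathfrak g$, the right-hand side being exactly $n$-dimensional since the sheaf is trivial of rank $n$, which forces the subgroup to be closed. Second, in the forward direction the ``toroidal local model'' of a smooth equivariant compactification of a semi-abelian variety with snc boundary is a genuine structure theorem, not a formality: it is precisely where algebraicity enters (Winkelmann's K\"ahler-case theorem needs an extra hypothesis on the isotropy groups along $D$ because this local model can fail for non-algebraic semi-torus actions), so you should either cite it or reduce to the torus case by projecting to the abelian quotient, where Sumihiro-type results apply fibrewise. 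With those two points supplied, your proof is complete and does the work that the paper delegates entirely to the citation.
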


Recall that a semi-abelian variety is an algebraic group which is a quotient of $(\mathbb C^*)^n$ by a lattice~$\Gamma$ which contains a $\mathbb C$-basis of $\mathbb C^n$.

As a consequence of this result, we get an
explicit description of projective lc pairs $(X,D)$ with free sheaf of logarithmic $1$-forms.

\begin{cor}\label{cor: global lc}
Let $(X,D)$ be an lc pair such that $X$ is projective.
Then the logarithmic tangent sheaf $T_X(-\log \lfloor D\rfloor )$ is free if and 
only of there is a semi-abelian variety~$T$ which acts on $X$ with $X\setminus \lfloor D\rfloor $ as an open orbit.
\end{cor}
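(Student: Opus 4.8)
The plan is to reduce the statement to Theorem~\ref{thm: thmWinkelmann} by passing to a log resolution and showing that freeness of $\mathcal T_X(-\log\lfloor D\rfloor)$ propagates upstairs and downstairs along the resolution, while the semi-abelian group action is likewise transported. One direction is easy: if a semi-abelian variety $T$ acts on $X$ with $X\setminus\lfloor D\rfloor$ as an open orbit, then the infinitesimal action of $\mathrm{Lie}(T)\cong\mathbb C^n$ produces $n$ commuting vector fields on $X$ that are logarithmic with respect to $\lfloor D\rfloor$ (the flows preserve the orbit decomposition, hence preserve $\lfloor D\rfloor$ as a set), and these are pointwise linearly independent on the open orbit; since $\mathcal T_X(-\log\lfloor D\rfloor)$ is reflexive of rank $n$ and these sections form a basis on a big open set, they form a global basis, so the sheaf is free.

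For the converse, suppose $\mathcal T_X(-\log\lfloor D\rfloor)$, equivalently $\Omega_X^{[1]}(\log\lfloor D\rfloor)$, is free. First I would invoke Theorem~\ref{thm: main thm}(b): since $(X,D)$ is lc, the pair $(X,\lfloor D\rfloor)$ is toroidal, so in particular $X$ has quotient singularities and $\lfloor D\rfloor$ is the toroidal boundary locally. Now let $\pi:\tilde X\to X$ be the functorial log resolution, $E=\mathrm{Exc}(\pi)$, $\tilde D=E+\overline D$. By Corollary~\ref{cor: locally free} the sheaf $\mathcal T_{\tilde X}(-\log\lfloor\tilde D\rfloor)$ is \emph{locally} free; but freeness is a global statement, so one must upgrade this: by Proposition~\ref{prop: log resolution} the $n$ global logarithmic vector fields $\xi_1,\dots,\xi_n$ on $X$ lift to global logarithmic vector fields $\tilde\xi_1,\dots,\tilde\xi_n$ on $\tilde X$, and by the argument in the proof of Corollary~\ref{cor: locally free} they remain pointwise independent (their duals, the extensions of the $\sigma_i$, satisfy $\tilde\sigma_i(\tilde\xi_j)=\delta_{ij}$ on a dense open set, hence everywhere). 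Thus $\mathcal T_{\tilde X}(-\log\tilde D)$ is globally free, $\tilde X$ is smooth, $\tilde D$ is snc, and $\tilde X$ is projective (as $X$ is and $\pi$ is birational projective). Theorem~\ref{thm: thmWinkelmann} then gives a semi-abelian variety $T$ acting on $\tilde X$ with $\tilde X\setminus\tilde D$ as an open orbit.

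It remains to descend this action to $X$. The key point is that $\tilde X\setminus\tilde D\cong X\setminus\lfloor D\rfloor$ (up to codimension-two loci this is automatic from the resolution being strong, and in fact since the resolution is an isomorphism over $(X,D)_{\mathrm{sing}}^c$ one gets the open torus orbit identified with an open subset of $X$), so $T$ acts on the open dense subset $X^\circ:=X\setminus\lfloor D\rfloor$ of $X$ as an open orbit. The action of $T$ on $X^\circ$ is given, infinitesimally, by the vector fields $\xi_i$, which are defined on all of $X$ and are complete there because their flows $\varphi_i$ (Theorem~\ref{thm: existence of flows}) are defined on all of $\mathbb C\times X$: indeed the $\tilde\xi_i$ are complete on $\tilde X$ since they integrate the $T$-action, and completeness descends along the proper birational morphism $\pi$ because $\pi_*\mathcal T_{\tilde X}=\mathcal T_X$ and the flow on $\tilde X$ carries $\pi$-fibres to $\pi$-fibres, hence descends. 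So the commuting complete vector fields $\xi_1,\dots,\xi_n$ integrate to an action of $T$ on all of $X$ extending the one on $X^\circ$, and $X^\circ$ is an open orbit. The main obstacle, which I would treat carefully, is precisely this descent step: checking that the resolved action $T\curvearrowright\tilde X$ actually descends to a \emph{regular} action on $X$ rather than merely a birational one — this uses that $\pi$ is the \emph{functorial} resolution, so it is equivariant for any group of automorphisms of $X$ (or rather: the $T$-action on $\tilde X$ restricts to $\tilde X\setminus E$, which maps isomorphically to $X\setminus(X,D)_{\mathrm{sing}}$, and then one shows the induced birational $T$-action on $X$ has no indeterminacy because the $\xi_i$ extend regularly over $(X,D)_{\mathrm{sing}}\subseteq\lfloor D\rfloor$ and are complete there).
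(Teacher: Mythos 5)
Your treatment of the direction ``free $\Rightarrow$ action'' follows the paper's route (pass to the functorial log resolution, get global freeness of $\mathcal T_{\tilde X}(-\log\lfloor\tilde D\rfloor)$ from the lifted vector fields and extended forms, apply Theorem~\ref{thm: thmWinkelmann}, descend the action), and your descent via completeness of the pushed-down flows is a workable variant of the paper's one-line descent via $T$-invariance of the exceptional divisors. Two small blemishes there: the detour through Theorem~\ref{thm: main thm}(b) is unnecessary, and the parenthetical claim that toroidal implies ``$X$ has quotient singularities'' is false (only simplicial toric singularities are quotient singularities) --- though nothing in your argument actually uses it.

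The genuine gap is in the direction you call easy, ``action $\Rightarrow$ free''. You argue that the $n$ commuting logarithmic vector fields coming from $\operatorname{Lie}(T)$ are pointwise independent on the open orbit $X\setminus\lfloor D\rfloor$, and that since $\mathcal T_X(-\log\lfloor D\rfloor)$ is reflexive of rank $n$ and these sections ``form a basis on a big open set, they form a global basis.'' But the complement of the open orbit is $\lfloor D\rfloor$, a \emph{divisor}; reflexivity only lets you promote an isomorphism $\mathcal O_X^{\oplus n}\to\mathcal T_X(-\log\lfloor D\rfloor)$ from the complement of a set of codimension at least $2$ to all of $X$. Sections of a reflexive sheaf that trivialize it off a divisor need not generate it along that divisor (already $z^2\frac{\partial}{\partial z}$ is a logarithmic vector field on $(\mathbb A^1,\{0\})$ that is nonvanishing off $0$ but does not generate $\mathcal T_{\mathbb A^1}(-\log\{0\})$). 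The substantive content of this implication is precisely that the infinitesimal action generates the log tangent sheaf \emph{along} $\lfloor D\rfloor$, and this is not formal. The paper gets it by lifting the $T$-action to the functorial log resolution (equivariance of the resolution, \cite[Proposition~3.9.1]{KollarSingularities}), invoking the ``if'' direction of Winkelmann's theorem on the smooth snc model $(\tilde X,\lfloor\tilde D\rfloor)$ to conclude $\mathcal T_{\tilde X}(-\log\lfloor\tilde D\rfloor)$ is free, and then pushing forward via $\pi_*\mathcal T_{\tilde X}(-\log\lfloor\tilde D\rfloor)\cong\mathcal T_X(-\log\lfloor D\rfloor)$ and $\pi_*\mathcal O_{\tilde X}=\mathcal O_X$. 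You need some such argument; as written this half of your proof does not close.
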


\begin{proof}
 Let $\pi:\tilde X\rightarrow X$ be a resolution of the pair $(X,D)$ as in Proposition~\ref{prop: log resolution} and denote $\tilde D=E+\overline{D}$, where $E$ is the exceptional divisor and $\overline{D}$ the strict transform of $D$.
Then by Proposition~\ref{prop: log resolution} and Corollary~\ref{cor: locally free}, the sheaf $\mathcal T_X(-\log \lfloor D\rfloor)$ is free if and only if $\mathcal T_{\tilde X}(-\log\lfloor \tilde D\rfloor)$ is free. 

Consequently, if $\mathcal T_X(-\log \lfloor D\rfloor )$ is free, then
Theorem~\ref{thm: thmWinkelmann} implies that 
there is a semi-abelian variety $T$ acting on $\tilde X$ with $\tilde X\setminus \lfloor \tilde D\rfloor$ as an open orbit.
Each component of $\lfloor\tilde D\rfloor $ and thus in particular the exceptional divisors are $T$-invariant.
Therefore, the $T$-action on $\tilde X$ induces a $T$-action on  $X$ with $X\setminus  \lfloor D\rfloor$ an open orbit. 

Conversely, if there is an action of a semi-abelian variety $T$ on $X$ with $X\setminus \lfloor D\rfloor $ as an open orbit, then
this action lifts to $\tilde X$ by 
\cite[Proposition~3.9.1]{KollarSingularities} with 
$\tilde X\setminus \lfloor\tilde D\rfloor $ as an open orbit.
Consequently,  $\mathcal T_{\tilde X}(-\log\lfloor \tilde D\rfloor)$ and hence $\mathcal T_X(-\log \lfloor D\rfloor)$ are free. 
\end{proof}

\subsection{Lc pairs with locally free sheaf of logarithmic $1$-forms}
We now consider the case of an arbitrary lc pair $(X,D)$ whose logarithmic tangent sheaf 
$\mathcal T_X(-\log \lfloor D\rfloor)$ is locally free.

First, we deal with the isolated case in the sense that
there is a point at which every logarithmic vector field vanishes.
Then the general case is considered and reduced to isolated case by an inductive argument via hyperplane sections.

\begin{lemma}\label{lemma: exceptional divisor is toric}
 Let $(X,D)$ be an lc pair with locally free tangent sheaf
 $\mathcal T_X(-\log \lfloor D\rfloor)$.
Suppose that there is $p\in X$ such 
that $\xi(p)=0$ for all logarithmic vector fields $\xi$ defined on some neighbourhood of $p$.

Then there exists a log resolution $\pi:\tilde X\rightarrow X$  of the pair $(X, D)$ with exceptional divisor~$E$ with the following properties:
 \begin{enumerate}
  \item[(1)] Each irreducible component of $\pi^{-1}(p)$ is a toric variety.
  \item[(2)] There is a point $q\in\pi^{-1}(p)$ such that $\xi(q)=0$ for any logarithmic vector field 
  $\xi\in T_{\tilde X}(-\log\lfloor \tilde D\rfloor)(U)$ defined on some open neighbourhood $U\subseteq \tilde X$ of $q$,
 where $\tilde D=\overline{D}+E$ for the strict transform $\overline{D}$ of $D$.
 \end{enumerate} 
\end{lemma}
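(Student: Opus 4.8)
The plan is to produce the required resolution as (a further modification of) the functorial log resolution and to read the toric structure off Winkelmann's theorem. First, observe that if $p\notin(X,D)_{\mathrm{sing}}$ then near $p$ the pair is snc, and for every log vector field to vanish at $p$ exactly $n$ components of $\lfloor D\rfloor$ must meet (snc) at $p$; in that case $X$ is smooth near $p$, a resolution is trivial there, and the lemma holds with $q=p$. So assume $p\in(X,D)_{\mathrm{sing}}$. Shrink $X$ to a neighbourhood of $p$ on which $\mathcal T_X(-\log\lfloor D\rfloor)$ is free with frame $\xi_1,\dots,\xi_n$, each vanishing at $p$ by hypothesis. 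Let $\pi\colon\tilde X\to X$ be the functorial log resolution, $E=\mathrm{Exc}(\pi)$, $\tilde D=E+\overline D$. By Proposition~\ref{prop: log resolution} the $\xi_i$ lift to vector fields $\tilde\xi_1,\dots,\tilde\xi_n$ on $\tilde X$, which by Corollary~\ref{cor: locally free} form a frame of the free sheaf $\mathcal T_{\tilde X}(-\log\lfloor\tilde D\rfloor)$; dually $\Omega^1_{\tilde X}(\log\lfloor\tilde D\rfloor)$ is free with frame $\tilde\sigma_1,\dots,\tilde\sigma_n$. Since the $\xi_i$ vanish at $p$, the local flow of each $\tilde\xi_i$ (which exists by Theorem~\ref{thm: existence of flows} and covers that of $\xi_i$) fixes the fibre $\pi^{-1}(p)\subseteq E$ as a set and preserves every irreducible component of $\lfloor\tilde D\rfloor$.

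Next I would identify the toric structure on the strata of $\lfloor\tilde D\rfloor$ lying over $p$. Let $E_a$ be a prime exceptional divisor with $\pi(E_a)=\{p\}$ and $\Delta_a=(\lfloor\tilde D\rfloor-E_a)|_{E_a}$ the induced snc boundary. Restricting the residue sequence along $E_a$ to $E_a$ gives a short exact sequence
\[
0\longrightarrow \Omega^1_{E_a}(\log\Delta_a)\longrightarrow \Omega^1_{\tilde X}(\log\lfloor\tilde D\rfloor)\big|_{E_a}\xrightarrow{\ \mathrm{res}_{E_a}\ }\mathcal O_{E_a}\longrightarrow 0,
\]
in which the middle sheaf is free and $\mathrm{res}_{E_a}$ is surjective. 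The map $\mathrm{res}_{E_a}$ is given by the residues $\mathrm{res}_{E_a}(\tilde\sigma_i)$, which are regular functions on the \emph{proper} variety $E_a$, hence constants, and not all zero; after a $\mathbb C$-linear change of the frame the sequence becomes $0\to\Omega^1_{E_a}(\log\Delta_a)\to\mathcal O_{E_a}^{\,n}\to\mathcal O_{E_a}\to0$ with the last arrow a coordinate projection, so $\Omega^1_{E_a}(\log\Delta_a)\cong\mathcal O_{E_a}^{\,n-1}$ is free. Theorem~\ref{thm: thmWinkelmann} then provides a semi-abelian variety acting on $E_a$ with $E_a\setminus\Delta_a$ as an open orbit; its abelian part is the Albanese variety of $E_a$, of dimension $h^1(E_a,\mathcal O_{E_a})$, so once we know $h^1(E_a,\mathcal O_{E_a})=0$ the variety $E_a$ is an equivariant compactification of a torus, i.e.\ toric. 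The same argument applies verbatim to the lower-dimensional strata of $\lfloor\tilde D\rfloor$ over $p$: each such stratum $F$ is smooth and proper with an induced snc boundary $\Delta_F$, the residues occurring in the iterated residue sequences are again constants (because $F$ is proper), hence $\Omega^1_F(\log\Delta_F)$ is free and, by Winkelmann plus the same vanishing, $F$ is toric. Choosing the log resolution suitably (replacing $\pi$ by a further functorial blow-up along $\pi^{-1}(p)$ if necessary, so that $\pi$ is log smooth over a neighbourhood of $p$ — the frame $\tilde\xi_i$, the flows, and all the above carry over), every irreducible component of $\pi^{-1}(p)$ is one of these strata, which gives~(1).

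For (2), take any irreducible component $F$ of $\pi^{-1}(p)$; by (1) it is a toric variety with open orbit $F\setminus\Delta_F$, so $\Delta_F$ has a $0$-dimensional torus stratum, i.e.\ there is a point $q\in F$ lying on $\dim F$ of the components of $\Delta_F$. Together with the $n-\dim F$ components of $\lfloor\tilde D\rfloor$ that contain $F$, exactly $n$ components of $\lfloor\tilde D\rfloor$ pass through $q$ and they are snc there; choosing local coordinates $z_1,\dots,z_n$ at $q$ with $\lfloor\tilde D\rfloor=\{z_1\cdots z_n=0\}$, the sheaf $\mathcal T_{\tilde X}(-\log\lfloor\tilde D\rfloor)$ is generated near $q$ by $z_1\partial_{z_1},\dots,z_n\partial_{z_n}$, so every logarithmic vector field defined near $q$ vanishes at $q$. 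Since $q\in F\subseteq\pi^{-1}(p)$, this is the point required in~(2).

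The main obstacle is twofold: arranging the resolution so that every component of $\pi^{-1}(p)$ is a proper log-smooth stratum to which Winkelmann's theorem can be applied, and — the substantive point — ruling out the abelian part of the semi-abelian variety acting on each such stratum, i.e.\ proving $h^1(F,\mathcal O_F)=0$. It is here that log-canonicity of $(X,D)$ must enter, through vanishing theorems for the higher direct images $R^i\pi_*$ on a log resolution of an lc pair, combined with the freeness of $\Omega^1_{\tilde X}(\log\lfloor\tilde D\rfloor)$.
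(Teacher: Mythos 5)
Your overall strategy (lift a frame of logarithmic vector fields to a log resolution, restrict to the components of $\pi^{-1}(p)$ via the residue of the dual frame, and invoke Winkelmann's theorem) is exactly the paper's, and your argument for part~(2) via a zero-dimensional torus stratum is a valid variant of the paper's fixed-point argument. However, there is a genuine gap at precisely the point you flag as ``the substantive point'': you never rule out the abelian part of the semi-abelian variety $T$ acting on a component $E_a$ of $\pi^{-1}(p)$. Your suggested route --- vanishing of $h^1(E_a,\mathcal O_{E_a})$ via higher direct image vanishing for log resolutions of lc pairs --- does not work as stated: lc singularities admit exceptional divisors over a point that are elliptic curves or abelian varieties (e.g.\ cones over such), so no vanishing theorem valid for all lc pairs can give $h^1(E_a,\mathcal O_{E_a})=0$; the freeness hypothesis must enter in an essential way, and you do not say how. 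The paper closes this gap by a quite different mechanism: the flows of the lifted frame $\tilde\xi_j$ preserve $E_a$, hence the $T$-action stabilises the line bundle $\mathcal O(-E_a)|_{E_a}$; a version of the Negativity Lemma (\cite[Proposition~1.6]{Graf}) shows $\mathcal O(-E_a)|_{E_a}$ is big; and the stabiliser of a big line bundle in a semi-abelian variety is contained in its maximal connected linear subgroup $(\mathbb C^*)^d$ (\cite[Proposition~5.5.28]{NoguchiWinkelmann}). Hence $T=(\mathbb C^*)^{n-1}$ and $E_a$ is toric. Without an argument of this kind your proof is incomplete.

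A secondary, more easily repaired issue: you need $\pi^{-1}(p)$ to be of pure codimension one so that each of its irreducible components actually is a component of $E$ to which the residue construction applies; your parenthetical ``further functorial blow-up along $\pi^{-1}(p)$ if necessary'' gestures at this but is not carried out. The paper arranges it cleanly by first blowing up the point $p$ (the frame lifts because every $\xi_j$ vanishes at $p$) and then taking the functorial log resolution of the resulting pair, so that $\pi^{-1}(p)=\tilde\pi^{-1}(E_p)$ is automatically divisorial.
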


\begin{proof}[Proof of (1)]
Shrink $X$ such $\mathcal T_X(-\log\lfloor D\rfloor )$ is free and let $\sigma_1,\ldots ,\sigma_n$ denote a basis of logarithmic $1$-forms and  $\xi_1,\ldots,\xi_n$ the dual logarithmic vector fields.

Let $\pi':X'\rightarrow X$ be the blow-up of $X$ in $p$,
and let $D'$ be the sum of the exceptional divisor~$E_p$ and the strict transform of $D$. 
Since each vector field $\xi_j$ fixes the the point $p$, these vector fields lift to logarithmic vector fields $\xi_1',\ldots,\xi_n'$ of the pair $(X',D')$, which can be proven by the same argument as used for Proposition~\ref{prop: log resolution}.

Let $\tilde\pi:\tilde X\rightarrow X'$ be the functorial log resolution of the pair $(X',D')$, 
and let $\tilde\xi_1,\ldots,\tilde\xi_n$ denote the lifts of 
$\xi_1',\ldots, \xi_n'$ to $\tilde X$.
The composition $\pi=\pi'\circ \tilde\pi:\tilde X\rightarrow 
X$ is also a log resolution of $(X,D)$,
and thus $\sigma_1,\ldots,\sigma_n$ extend to logarithmic $1$-forms $\tilde\sigma_1,\ldots,\tilde\sigma_n$ on $\tilde X$.

Let $E$ denote the exceptional divisor of $\pi$, $\overline{D}$ the strict transform of $D$, and set $\tilde D=E+\overline{D}$.
Since $\pi^{-1}(p)=\tilde\pi^{-1}(E_p)$, the fibre $\pi^{-1}(p)$ has pure codimension $1$, and each irreducible component of $\pi^{-1}(p)$ is a component of the exceptional divisor $E$.
Furthermore, the flows of $\tilde\xi_1,\ldots,\tilde\xi_n$ all stabilise $\pi^{-1}(p)$ since $\xi_1,\ldots,\xi_n$ vanish 
at~$p$
and hence $\tilde\xi_1,\ldots,\tilde\xi_n$ induce vector fields on $\pi^{-1}(p)$.
Let $E_1$ be an irreducible component of $\pi^{-1}(p)$ and let
$q\in E_1$ a point which is not contained in any other irreducible component of $E$ and also not contained in 
$\lfloor \tilde D\rfloor$.
 Since $E_1\subseteq \pi^{-1}(p)$ is projective,
 we may assume that the residues of $\tilde{\sigma}_1,\ldots,
 \tilde{\sigma}_n$ satisfy 
 $$\mathrm{res}_{E_1}(\tilde\sigma_1)=1$$ and 
 $$\mathrm{res}_{E_1}(\tilde\sigma_j)=0$$ if $j>1$.
 
 Therefore $\tilde\sigma_2,\ldots,\tilde \sigma_n$
  induce logarithmic $1$-forms on $E_1$ with respect to the divisor $B=(E_{2}+\ldots +E_k+\overline{D})|_{E_1}$
 if $E_1,\ldots, E_k$ denote the irreducible components of $E$. The restrictions of $\tilde{\xi}_{2},\ldots,\tilde{\xi}_n$ to $E_1$ are dual to these forms.
 Therefore, the sheaves $\Omega_{E_1}^{[1]}(\log \lfloor B\rfloor)$ and $\mathcal T_{E_1}(-\log \lfloor B\rfloor)$ are free.
 By \cite[Corollary~1]{WinkelmannTrivialLogBundle} there is a semi-abelian variety $T$ acting on~$E_1$ with $E_1\setminus\lfloor B\rfloor$ as an open orbit,
 where $T$ admits a short exact sequence of algebraic groups $$0\rightarrow (\mathbb C^*)^{d}\rightarrow T\rightarrow \mathrm{Alb}(E_1)\rightarrow 0$$
 for some $d$ and where $\mathrm{Alb}(E_1)$ denotes the Albanese variety of $E_1$.
 Furthermore, the Lie algebra of $T$ is spanned by the vector fields $\tilde{\xi}_{2}|_{E_1},\ldots,\tilde{\xi}_n|_{E_1}$.

 The flow of each vector field $\tilde{\xi_j}|_{E_1}$ is global, i.e.\ we can take $\mathbb C\times E_1$ as its domain of definition, 
 and for every relatively compact open subset $U\subset \mathbb C$, there is a neighbourhood $V\subset \tilde X$ of $E_1$ such that the flow $\varphi^j$ of $\tilde{\xi}_j$ is defined on 
 $U\times V$, $\varphi^j:U\times V\rightarrow \tilde X$, $(t,x)\mapsto \varphi^j(t,x)=\varphi^j_t(x)$.
 
 Let $\mathcal L=\mathcal O(-E_1)$ denote the line bundle associated with the divisor $E_1$. 
 We have $\varphi^j_t(E_1)=E_1$ and thus get 
 $(\varphi^j_t)^*(\mathcal L|_W)=\mathcal L|_V$ for any $t\in \mathbb C$ and appropriate neighbourhoods $V,W$ of $E_1$ in $\tilde X$ with 
 $\varphi^j_t:V\rightarrow W$.
 Consequently, we have $(\varphi^j_t)^*(\mathcal L|_{E_1})=\mathcal L|_{E_1}$ for all $t\in \mathbb C$ 
 and hence 
 $g^*(\mathcal L|_{E_1})=\mathcal L|_{E_1}$ for any $g\in T$.
 
 By a version of the Negativity Lemma as in \cite[Proposition~1.6]{Graf}, the line bundle 
 $\mathcal L|_{E_1}$ is big.
 Therefore, the stabiliser $\mathrm{St}(\mathcal L|_{E_1})=\{t\in T\,|\, t^*(\mathcal L|_{E_1})=\mathcal L|_{E_1}\}$ of 
 $\mathcal L|_{E_1}$ is contained in the maximal connected linear subgroup $(\mathbb C^*)^d$ of $T$ by \cite[Proposition~5.5.28]{NoguchiWinkelmann}.
 Thus we have $T=(\mathbb C^*)^d$ with $d=n-1$, $\mathrm{Alb}(E_1)=0$, and $E_1$ is a toric variety.  
\end{proof}
\begin{proof}[Proof of (2)]
Let $E_1$ be any irreducible component of $\pi^{-1}(p)$.
 Since $E_1$ is smooth and projective, the action of the torus $T=(\mathbb C^*)^{n-1}$ on $E_1$ has a fixed point $q\in E_1$.
 The Lie algebra of $T$ is spanned by $\tilde{\xi}_2|_{E_1},\ldots,\tilde{\xi}_n|_{E_1}$ and thus we have 
 $\tilde{\xi}_j(q)=0$ for all $j>1$. By construction only~$\tilde{\sigma}_1$ has a pole along~$E_1$ and therefore we necessarily have $\tilde{\xi}_1|_{E_1}=0$ for the dual vector field.
 Moreover, $\tilde{\xi}_1,\ldots,\tilde{\xi}_n$ span the sheaf $\mathcal T_{\tilde X}(-\log\lfloor \tilde D\rfloor)$ on some neighbourhood of $E_1$ and the statement follows.
\end{proof}

\begin{prop}\label{prop: isolated lc case}
  Let $(X,D)$ be an lc pair whose 
  logarithmic tangent sheaf $\mathcal T_X(-\log \lfloor D\rfloor)$ is locally free.
Suppose that there is $p\in X$ such 
that $\xi(p)=0$ for all logarithmic vector fields~$\xi$ defined on some neighbourhood of $p$.

 Then there exist closed logarithmic $1$-forms $\sigma_1,\ldots,\sigma_n$ which span the sheaf $\Omega_X^{[1]}(\log \lfloor D\rfloor)$ in 
 a neighbourhood of $p$.
In particular, the pair $(X,D)$ is toroidal in a neighbourhood of $p$ by Theorem~\ref{thm: closed one forms}.
\end{prop}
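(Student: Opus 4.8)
The plan is to rephrase the statement in terms of commuting vector fields, verify it on the log resolution supplied by Lemma~\ref{lemma: exceptional divisor is toric}, and descend. By Lemma~\ref{lemma: closed equals commuting} it is enough to produce, on a neighbourhood of $p$, a basis of $\mathcal T_X(-\log\lfloor D\rfloor)$ consisting of pairwise commuting logarithmic vector fields: its dual basis then consists of closed logarithmic $1$-forms spanning $\Omega_X^{[1]}(\log\lfloor D\rfloor)$ near $p$, and Theorem~\ref{thm: closed one forms} yields that $(X,D)$ is toroidal near $p$.

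Let $\pi\colon\tilde X\to X$, $E=\mathrm{Exc}(\pi)$, $\tilde D=E+\overline D$ and $q\in\pi^{-1}(p)$ be as in Lemma~\ref{lemma: exceptional divisor is toric}, so every irreducible component of the (connected) fibre $\pi^{-1}(p)$ is a smooth projective toric variety and every logarithmic vector field on $\tilde X$ vanishes at $q$. Shrink $X$ to a neighbourhood $U\ni p$ on which $\mathcal T_X(-\log\lfloor D\rfloor)$ is free. By Proposition~\ref{prop: log resolution} one has $\mathcal T_X(-\log\lfloor D\rfloor)=\pi_*\mathcal T_{\tilde X}(-\log\lfloor\tilde D\rfloor)$, by Corollary~\ref{cor: locally free} the sheaf $\mathcal T_{\tilde X}(-\log\lfloor\tilde D\rfloor)$ is free on $\pi^{-1}(U)$, and $\mathcal O_{\tilde X}(\pi^{-1}U)=\mathcal O_X(U)$ with matching Lie brackets. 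Hence it suffices to find a basis of pairwise commuting logarithmic vector fields for $\mathcal T_{\tilde X}(-\log\lfloor\tilde D\rfloor)$ on the neighbourhood $\pi^{-1}(U)$ of $\pi^{-1}(p)$.

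Fix such a basis $\xi_1,\dots,\xi_n$ to begin with, e.g.\ the lifts of a basis downstairs. Near $q$, the freeness of $\mathcal T_{\tilde X}(-\log\lfloor\tilde D\rfloor)$ together with the vanishing of every logarithmic vector field at $q$ forces all $n$ components of $\lfloor\tilde D\rfloor$ to meet at $q$, so in snc coordinates $z_1,\dots,z_n$ there one has $\lfloor\tilde D\rfloor=\{z_1\cdots z_n=0\}$ and the commuting vector fields $z_1\partial_{z_1},\dots,z_n\partial_{z_n}$ form a local basis. Along each component $E_a$ of $\pi^{-1}(p)$ — a toric variety with torus $T_a\cong(\mathbb C^*)^{n-1}$ whose Lie algebra is spanned, after a constant change of the $\xi_i$, by the nonzero restrictions among $\xi_1|_{E_a},\dots,\xi_n|_{E_a}$, the remaining one being $0$, as in the proof of Lemma~\ref{lemma: exceptional divisor is toric} — the restrictions $\xi_1|_{E_a},\dots,\xi_n|_{E_a}$ pairwise commute, $T_a$ being abelian; since this is invariant under constant changes of basis, the $\xi_i$ commute along all of $\pi^{-1}(p)$, and a commuting basis of the sheaf exists near every point of $\pi^{-1}(p)$. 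The heart of the proof is to globalise this: one integrates the torus actions on the components $E_a$ to $S^1$-actions defined on a neighbourhood of $\pi^{-1}(p)$ in $\tilde X$ (the flows exist by Theorem~\ref{thm: existence of flows}, and the direction not seen on a given $E_a$ is supplied by the coordinate $z_i$ carrying the corresponding residue at $q$), patches these over the connected chain of components into $n$ commuting $S^1$-actions on a neighbourhood of $\pi^{-1}(p)$, and takes the induced vector fields as the sought commuting basis — the mechanism being that of Lemma~\ref{lemma: geometry of globalisation} and Lemma~\ref{lemma: extension for dual forms}. This globalisation — upgrading commutativity along, and commuting bases near each point of, the positive-dimensional compact fibre $\pi^{-1}(p)$ to a commuting basis on a full neighbourhood of it — is the main obstacle; granting it, the reduction above and Theorem~\ref{thm: closed one forms} finish the proof.
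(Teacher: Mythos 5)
Your reduction of the statement to producing a commuting local basis of $\mathcal T_X(-\log\lfloor D\rfloor)$ via Lemma~\ref{lemma: closed equals commuting}, and the passage to the log resolution of Lemma~\ref{lemma: exceptional divisor is toric}, agree with the paper. But the proof stops exactly where the real work begins. Commutativity of the restrictions $\xi_i|_{E_a}$ to the components of $\pi^{-1}(p)$ only tells you that the brackets $[\tilde\xi_i,\tilde\xi_j]$ vanish \emph{along the fibre}, not on a neighbourhood of it; and near $q$ your basis differs from the commuting frame $z_1\partial_{z_1},\dots,z_n\partial_{z_n}$ by a matrix $A(z)$ of holomorphic functions which there is no a priori reason to be constant. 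The proposed globalisation --- ``integrate the torus actions to $S^1$-actions near $\pi^{-1}(p)$ and patch'' --- is not carried out, and it cannot be borrowed from Lemma~\ref{lemma: geometry of globalisation} or Lemma~\ref{lemma: extension for dual forms}: both presuppose that the generating $1$-forms are already closed (equivalently, that the dual vector fields already commute), which is precisely what is to be proved here. Moreover, an individual $\xi_i$ as given need not generate a periodic flow at all, so there is no $S^1$-action to integrate until the basis has been suitably modified.

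The paper supplies exactly this missing step. One first replaces $\xi_1,\dots,\xi_n$ by an invertible linear combination so that $A(0)$ has first row $(n+1,n+2,\dots,2n)$ and vanishing first column below it. The linear part of $\tilde\xi_1$ at $q$ then has non-resonant, pairwise distinct eigenvalues whose convex hull avoids $0$, so Poincar\'e's theorem linearises $\tilde\xi_1$ near $q$; its flow is $2\pi i$-periodic there, hence, by the identity principle and compactness of $\pi^{-1}(p)$, on a whole neighbourhood, producing a genuine $S^1$-action $\chi^1$ inducing $\xi_1$ near $p$. One then averages $\xi_2,\dots,\xi_n$ over $\chi^1$; in the linearising coordinates the averaged lifts become linear and, because they commute with $\tilde\xi_1$ whose eigenvalues at $q$ are distinct, diagonal --- whence they pairwise commute, and the change-of-basis matrix equals the identity at $p$, so they still form a basis of the logarithmic tangent sheaf. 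None of these ingredients (the eigenvalue normalisation, the Poincar\'e linearisation, the averaging, the diagonality argument, the verification that the new fields remain a basis) appears in your proposal, so as written it has a genuine gap at its central step.
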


The proof consists of two main steps. 
First, we consider a local basis of logarithmic vector fields $\xi_1,\ldots, \xi_n$, and consider their lifts $\tilde\xi_1,\ldots, \tilde\xi_n$ to a log resolution $(\tilde X,\tilde D)$.
The statement of the preceding lemma is then used to study their behaviour near a point $q$ where $\tilde\xi_1(q)=\ldots = \tilde\xi_n(q)=0$
and a version of Poincar\'e's theorem (see e.g.\ \cite[p.\ 190]{Arnold}) on the normal form of holomorphic vector fields allows us to modify
$\xi_1,\ldots, \xi_n$ in such a way that these vector fields are induced by local $\mathbb C^*$-actions, or equivalently by $S^1$-actions, on a neighbourhood of~$p$.

Then, averaging by an appropriate $S^1$-action yields commuting vector fields $\eta_1,\ldots,\eta_n$, which can be shown to still span 
the sheaf of logarithmic vector fields locally near $p$. The logarithmic $1$-forms dual to $\eta_1,\ldots,\eta_n$ are then closed by Lemma~\ref{lemma: closed equals commuting} and yield the desired local basis 
of closed logarithmic $1$-forms.

\begin{proof}
 Let $\pi:\tilde X\rightarrow X$ be a log resolution of the pair $(X,D)$ as in Lemma~\ref{lemma: exceptional divisor is toric}.
 As before, let~$E$ denote the exceptional divisor and $\overline{D}$ the strict transform of $D$, $\tilde D=E+\overline{D}$.
 
 Since the question is local we may assume
 again that $T_X(-\log \lfloor D\rfloor )$ and 
 $\Omega_X^{[1]}(\log\lfloor D\rfloor)$ are free.
 Let $\xi_1,\ldots,\xi_n$ be a basis of logarithmic vector fields and let $\tilde{\xi}_1,\ldots,\tilde{\xi}_n$ denote 
 their lifts to $\tilde X$.
 By Lemma~\ref{lemma: exceptional divisor is toric}, there is a point $q\in \pi^{-1}(p)$ with
 $\tilde{\xi}_1(q)=\ldots =\tilde{\xi}_n(q)=0$.
 Since $\tilde{\xi}_1,\ldots,\tilde{\xi}_n$ form a basis for $\mathcal T_{\tilde X}(-\log \lfloor\tilde D\rfloor)$, 
 $n$ irreducible components $\tilde D_{i_1},\ldots ,\tilde D_{i_n}$ of $\lfloor\tilde D\rfloor$ have to meet in~$q$.
 There exist local coordinates $z_1,\ldots, z_n$ near $q$ such that $q=0$ and locally
 $D_{i_l}=\{z_l=0\}$ for $l=1,\ldots, n$,
 and there are local holomorphic functions 
 $a_{kl}(z)$ 
 such that locally 
 $$\left(\begin{smallmatrix}
         \tilde{\xi}_1\\
         \vdots \\
         \\
         \tilde{\xi}_n
        \end{smallmatrix}\right)
=A(z)\left(\begin{smallmatrix}
            z_1\frac{\partial}{\partial z_1}\\
            \vdots\\
            \\
            z_n\frac{\partial}{\partial z_n}
           \end{smallmatrix}\right)$$
for $A(z)=(a_{kl}(z))_{1\leq k,l\leq n}$.

We now want to prescribe the linear part of the vector field $\tilde{\xi}_1$ at the point $q$ such 
that~$\xi_1$ is conjugated to its linear part and its flow induces a local $\mathbb C^*$-action.
For this, we substitute $\xi_1,\ldots,\xi_n$ by a invertible linear combination of them such 
that $A(0)$ is of the form
$$A(0)=     \left(
     \begin{array}{c|ccc}
       n+1 & n+2 & \cdots & 2n\\ \hline 
       0&  & &  \\  
       \vdots & &\mbox{\LARGE $A_0$ } & \\
       0&  & & 
     \end{array}
     \right)$$          
      
      where $A_0$ is an invertible $(n-1)\times (n-1)$-matrix.

The linear part of $\tilde{\xi}_1$ at $q$ is then given by 
$$(n+1)z_1\frac{\partial}{\partial z_1}+\cdots +2nz_n\frac{\partial}{\partial z_n},$$
and the important point is that its $n$-tuple of eigenvalues $(n+1,\ldots, 2n)$ is non-resonant (in the sense of \cite[$\S$ 22]{Arnold}) and 
the convex hull of the eigenvalues does not contain~$0$.
Hence we may apply Poincar\'e's theorem (see e.g.\ \cite[p.\ 190]{Arnold}) and get that 
there is a neighbourhood of $q$ on which $\tilde{\xi}_1$ is biholomorphically conjugated to its linear part $(n+1)z_1\frac{\partial}{\partial z_1}+\cdots +2nz_n\frac{\partial}{\partial z_n}.$
Moreover, the eigenvalues are all different, which we will need later on.

In a neighbourhood of $q$ the flow $\tilde\varphi^1$ of $\tilde{\xi}_1$ is given by  
$$\left(t,\left(\begin{smallmatrix}
                 w_1\\
                 \vdots\\
                 \\
                 w_n
                \end{smallmatrix}\right)\right)
                \mapsto 
            \left(\begin{smallmatrix}
                 e^{(n+1)t}w_1\\
                 \vdots\\
                 \\
               	 e^{2nt}w_n
                \end{smallmatrix}\right)$$
in appropriate local coordinates $w_1,\ldots, w_n$.
Since $\pi^{-1}(p)$ is compact, the vector field $\tilde{\xi}_1$ induces a global flow on each irreducible component of $\pi^{-1}(p)$,
and there is an open connected neighbourhood $V\subseteq X$ of $p$ such that
$\tilde\varphi^1$ can be defined on $U\times \pi^{-1}(V)$, where $$U=\{t\in\mathbb C\,|\, |\mathrm{Re}(t)|<1,\, |\mathrm{Im}(t)|<4\pi\},$$ and the flow 
$\varphi^1$ of $\xi_1$ is defined on $U\times V$, and we have a commutative diagram:
$$
\xymatrix{
 U\times \pi^{-1}(V)\ar[d]_{\mathrm{id}\times \pi}\ar[rr]^/.8em/{\tilde\varphi^1}& & \tilde X\ar[d]^\pi\\
 U\times V\ar[rr]^/.8em/{\varphi^1} && X
}$$
Locally near $q$ we have $\tilde\varphi^1(2\pi i,w)=w$
and by the identity principle we thus get 
$\tilde\varphi^1(2\pi i,y)=y$ for any $y\in \pi^{-1}(V)$
and moreover $\varphi^1(2\pi i,x)=x$ for any $x\in V$.
Hence the flow map $\varphi^1$ induces a local $\mathbb C^*$-action and 
we may define an $S^1$-action $\chi^1: S^1\times V\rightarrow V$ on $V$ (after possibly shrinking~$V$) by 
setting $\chi^1(e^{is},x)=\varphi^1(is,x)$ (as explained in the proof of Lemma~\ref{lemma: geometry of globalisation})
which induces $\xi_1$.
Moreover, $\tilde{\varphi}^1$ gives rise to an $S^1$-action
$\tilde\chi^1:S^1\times \pi^{-1}(V)\rightarrow \pi^{-1}(V)$ 
which induces the vector field $\tilde\xi_1$.

We now want to use the $S^1$-action $\chi^1:U\times V\rightarrow V$ to average the other vector fields $\xi_j$ and obtain commuting vector fields.
For this purpose we define vector fields
$$\xi_j'=\int_{S^1}(\chi^1_s)_*(\xi_j)\, d\mu(s)$$
for $j\geq 2$, where $\mu$ denotes the unique normalised Haar measure on $S^1$, 
we write $\chi^1_s$ for $\chi^1(s, \cdot)$, and the push-forward
$(\chi^1_s)_*(\xi_j)$ of the vector field $\xi_j$ is as usually defined by 
$$(\chi^1_s)_*(\xi_j)(f)(x)=\xi_j(f\circ \chi^1_s)(\chi^1_{s^{-1}}(x))$$ for any $x\in V$ and local holomorphic function $f$.
The vector fields $\xi_j'$ are all logarithmic with respect to $D$ since the $S^1$-action $\chi^1$ stabilises each irreducible component~$D_i$ of $\lfloor D\rfloor$.
Moreover, for any $t\in S^1$ we have
$$(\chi^1_t)_*(\xi_j')=(\chi^1_t)_*
\int_{S^1}(\chi^1_s)_*(\xi_j)\, d\mu(s)
=\int_{S^1}(\chi^1_t)_*(\chi^1_s)_*(\xi_j)\, d\mu(s)
=\int_{S^1}(\chi^1_{st})_*(\xi_j)\, d\mu(s)=\xi_j'$$
due to the invariance of the Haar measure.
This implies 
$(\varphi^1_t)_*(\xi_j')=\xi_j'$ for
any $t\in \mathbb C$ in a neighbourhood of~$0$.
Consequently, 
the vector fields $\xi_1$ and $\xi_j'$ commute:
$$[\xi_1,\xi_j']=
-\left.\frac{d}{dt}\right|_{t=0}(\varphi^1_t)_*(\xi_j')=
-\left.\frac{d}{dt}\right|_{t=0}\xi_j'=0$$

Next, we prove that $\xi_1',\ldots ,\xi_n'$ (where we set $\xi_1'=\xi_1$) still form a local basis for the logarithmic tangent sheaf $\mathcal T_X(-\log \lfloor D\rfloor)$ near $p$ and that $\xi_1',\ldots ,\xi_n'$  are pairwise commuting.
In order to do so, we consider the lifts of $\xi_j'$ to $\pi^{-1}(V)$, which are given by 
$$\tilde\xi_j'=\int_{S^1}(\tilde\chi_s^1)_*(\tilde\xi_j)\,d\mu(s),$$
and analyse them near the point $q$.
Recall that in appropriate coordinates $w_1,\ldots, w_n$ with $w_j(q)=0$
we have $\tilde\xi_1'=\tilde\xi_1=(n+1)w_1\frac{\partial}{\partial w_1}+\ldots
+2nw_n\frac{\partial}{\partial w_n}$ near $q$.
Let $b_{jkl}(w)$ be holomorphic functions defined locally near $q$
such that
$$\tilde\xi_j=\sum_{k,l=1}^n b_{jkl}(w)w_k\frac{\partial}{\partial w_l}$$
for $j\geq 2$.
Then a calculation in local coordinates yields that
$\tilde\xi_j'$ is linear (with respect to the coordinates $w_1,\ldots, w_n$):
$$\tilde\xi_j'=\sum_{k,l=1}^n b_{jkl}(0)w_k\frac{\partial}{\partial w_l}$$
Moreover, since all eigenvalues of $\tilde\xi_1=\tilde\xi_1'$ at $q=0$ are different and 
$\tilde\xi_j'$ and $\tilde\xi_1'$ commute we get
that 
$b_{jkl}(0)=0$ if $k\neq l$ and hence $\tilde\xi_j'$ is of the form
$$\tilde\xi_j'=\sum_{k=1}^n b_{jk}w_k\frac{\partial}{\partial w_k}$$
for some constants $b_{jk}$.
In particular, we see now that 
the vector fields $\tilde\xi_j'$ are all pairwise commuting near $q$, thus by the identity principle on all of $\pi^{-1}(V)$ and 
consequently $\xi_1',\ldots,\xi_n'$ are also pairwise commuting vector fields.

Moreover, we have 
$$\left(\begin{smallmatrix}
\tilde\xi_1'\\
\vdots\\
\\
\tilde\xi_n'
\end{smallmatrix}\right)
=\tilde{C}(w) 
\left(\begin{smallmatrix}
\tilde\xi_1\\
\vdots\\
\\
\tilde\xi_n
\end{smallmatrix}\right)
$$
for some matrix $\tilde C(w)$ whose entries $\tilde c_{jk}(w)$ are local holomorphic functions and 
which satisfies $\tilde C(q)=\tilde C(0)=E_n$.

Since $\xi_1,\ldots, \xi_n$ form a basis of logarithmic vector fields on $X$, we have 
$$\left(\begin{smallmatrix} 
\xi_1'\\
\vdots\\
\\
\xi_n'
\end{smallmatrix}\right)
={C}(x) 
\left(\begin{smallmatrix}
\xi_1\\
\vdots\\
\\
\xi_n
\end{smallmatrix}\right)
$$
for a matrix $C(x)$ whose entries $c_{jk}(x)$ are holomorphic functions on a neighbourhood of~$p$.
Using that $\tilde\xi_1,\ldots,\tilde\xi_n$ are the lifts of 
$\xi_1,\ldots,\xi_n$ and $\tilde\xi_1',\ldots, \tilde\xi_n'$ the lifts
of $\xi_1',\ldots,\xi_n'$, we get
$C(\pi(y))=\tilde C(y)$ on a neighbourhood of $q\in \tilde X$ 
and in particular
$C(p)=C(\pi(q))=\tilde C(q)=E_n$ is invertible.
Hence $\xi_1',\ldots ,\xi_n'$ also form a local basis of logarithmic vector fields on $X$ near~$q$.
These vector fields $\xi_1',\ldots,\xi_n'$ commute 
and their dual logarithmic $1$-forms $\sigma_1,\ldots,\sigma_n$ are thus closed (cf.\ Lemma~\ref{lemma: closed equals commuting}).
\end{proof}

The statement of the next lemma on hyperplane sections will be useful when
reducing the case of an lc pair $(X,D)$ with locally free sheaf
$\Omega_X^{[1]}(\log \lfloor D\rfloor)$ to the isolated case as in Proposition~\ref{prop: isolated lc case}.
For more details on hyperplane sections and their properties relevant to our setting the reader is referred to \cite[Section~2.E]{GKKP11}.

\begin{lemma}\label{lemma: hyperplanes and locally free}
Let $(X,D)$ be an lc pair, $D_1,\ldots D_k$ the irreducible components of $D$, $D=\sum_i a_i D_i$, and let 
$H$ be a general member of an ample basepoint free linear system on~$X$.

If the sheaf $\Omega_X^{[1]}(\log \lfloor D\rfloor)$ is locally free,
then $\Omega_H^{[1]}(\log \lfloor D\rfloor|_H)$ is locally free.
\end{lemma}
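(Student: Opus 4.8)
The plan is to exhibit $\Omega_H^{[1]}(\log\lfloor D\rfloor|_H)$ as the quotient of the \emph{locally free} sheaf $\Omega_X^{[1]}(\log\lfloor D\rfloor)|_H$ by a sub-line-bundle, namely the conormal bundle of $H$, and to show that the corresponding conormal inclusion is a sub-bundle at every point of $H$; this last point is where the log canonicity enters, via the behaviour of logarithmic vector fields under a log resolution. By standard Bertini-type theorems (see \cite[Section~2.E]{GKKP11}), for general $H$ in the linear system we may assume: $H$ is normal, $H$ is not a component of $D$, $(H,D|_H)$ is again lc with $\lfloor D\rfloor|_H=\lfloor D|_H\rfloor$, the snc locus $X^\circ:=X\setminus(X,\lfloor D\rfloor)_{\mathrm{sing}}$ meets $H$ in an open subset $H^\circ:=H\cap X^\circ$ whose complement in $H$ has codimension $\geq 2$, and $H$ is smooth and transverse to $\lfloor D\rfloor$ along $X^\circ$ (so $(H,\lfloor D\rfloor|_H)$ is snc along $H^\circ$ and $\Omega_H^{[1]}(\log\lfloor D\rfloor|_H)$ is the pushforward of $\Omega^1_{H^\circ}(\log\lfloor D\rfloor|_{H^\circ})$). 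Let $\pi:\tilde X\to X$ be the functorial log resolution of $(X,D)$ with $E=\mathrm{Exc}(\pi)$ and $\tilde D=E+\overline D$; applying Bertini on $\tilde X$ to the base-point free system $\pi^*|A|$, we may moreover assume that $\tilde H:=\pi^{-1}(H)=\pi^*H$ is a smooth prime divisor and that $\tilde H+\lfloor\tilde D\rfloor$ is snc with $\tilde H$ transverse to $\lfloor\tilde D\rfloor$ and not one of its components.

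\textbf{The conormal sequence.} On $H^\circ$ the usual conormal sequence for logarithmic forms on an snc pair gives a short exact sequence of locally free sheaves
$$0\longrightarrow \mathcal O_{H^\circ}(-H)\overset{\theta}\longrightarrow \Omega^1_X(\log\lfloor D\rfloor)|_{H^\circ}\overset{r}\longrightarrow \Omega^1_{H^\circ}(\log\lfloor D\rfloor|_{H^\circ})\longrightarrow 0$$
in which $\theta$ is a sub-bundle. Since $\mathcal E:=\Omega_X^{[1]}(\log\lfloor D\rfloor)|_H$ is locally free, $\mathcal O_H(-H)$ is a line bundle, and $\mathcal F:=\Omega_H^{[1]}(\log\lfloor D\rfloor|_H)$ is reflexive, each of $\theta$ and $r$ is a section of a reflexive $\mathcal{H}om$-sheaf on $H$ over the big open subset $H^\circ$ and hence extends to $H$, yielding $\mathcal O_H(-H)\overset{\theta}\to\mathcal E\overset{r}\to\mathcal F$ with $\theta$ injective and $r\circ\theta=0$ (the latter since $\mathcal F$ is torsion-free and $H^\circ$ dense). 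It now suffices to prove that $\theta$ is a sub-bundle at every point of $H$: then $\mathcal G:=\mathcal E/\theta(\mathcal O_H(-H))$ is locally free of rank $n-1$, and the induced map $\overline r:\mathcal G\to\mathcal F$ restricts to an isomorphism over $H^\circ$; since $\mathcal G$ is locally free, $\mathcal F$ reflexive, and both equal the pushforward of their common restriction to the big open $H^\circ$, $\overline r$ is an isomorphism, so $\mathcal F$ is locally free.

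\textbf{The main step: $\theta$ is a sub-bundle everywhere.} If $x\in H^\circ$ this is clear, so let $x\in H\setminus H^\circ$. Dualising, $\theta$ is a sub-bundle at $x$ if and only if the restriction map $\mathcal T_X(-\log\lfloor D\rfloor)|_H\to N_{H/X}=\mathcal O_H(H)$, $\xi\mapsto\xi(f)$ (with $f$ a local equation of $H$ at $x$), is surjective at $x$ --- equivalently, some germ at $x$ of a logarithmic vector field on $X$ is transverse to $H$ at $x$. Suppose not. Shrinking $X$, pick a frame $\xi_1,\dots,\xi_n$ of $\mathcal T_X(-\log\lfloor D\rfloor)$, so that $\xi_j(f)\in\mathfrak m_x$ for all $j$. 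By Corollary~\ref{cor: locally free} --- this is where the lc hypothesis is used, through the Extension Theorem~\ref{thm: extension} applied to $(X,D)$ --- the lifts $\tilde\xi_1,\dots,\tilde\xi_n$ of Proposition~\ref{prop: log resolution} form a frame of $\mathcal T_{\tilde X}(-\log\lfloor\tilde D\rfloor)$ near $\pi^{-1}(x)$. As $\tilde\xi_j$ is $\pi$-related to $\xi_j$, we have $\tilde\xi_j(\pi^*f)=\pi^*(\xi_j(f))$, which vanishes on $\pi^{-1}(x)$; since $\pi^*f$ is a local equation for $\tilde H$, each $\tilde\xi_j$ is tangent to $\tilde H$ at every point $q\in\pi^{-1}(x)\subseteq\tilde H$. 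But near any such $q$ there are coordinates $z_1,\dots,z_n$ with $\tilde H=\{z_n=0\}$ and $\lfloor\tilde D\rfloor\subseteq\{z_1\cdots z_{n-1}=0\}$ (using that $\tilde H+\lfloor\tilde D\rfloor$ is snc and $\tilde H$ is not a component of $\tilde D$), so $\partial/\partial z_n$ is a logarithmic vector field near $q$ and therefore a combination $\sum_j g_j\tilde\xi_j$ of the frame; evaluating $1=(\partial/\partial z_n)(z_n)=\sum_j g_j\,\tilde\xi_j(z_n)$ at $q$ forces $\tilde\xi_j(z_n)(q)\neq 0$ for some $j$, contradicting the tangency of $\tilde\xi_j$ to $\tilde H$ at $q$. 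Hence $\theta$ is a sub-bundle at $x$, which completes the proof.

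The hard part is exactly this last step: passing from local freeness of $\Omega_X^{[1]}(\log\lfloor D\rfloor)$ (a statement about generators) to the fibrewise transversality of some logarithmic vector field to $H$ at each point, and the leverage for this is the clean behaviour of logarithmic vector fields under the functorial log resolution supplied by Corollary~\ref{cor: locally free}.
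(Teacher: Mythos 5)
Your proof is correct and follows essentially the same route as the paper: both arguments realise $\Omega_H^{[1]}(\log\lfloor D\rfloor|_H)$ as the quotient of $\Omega_X^{[1]}(\log\lfloor D\rfloor)|_H$ by the conormal line, reduce to the pointwise non-degeneracy of $dh$ in the logarithmic (co)frame, and verify that on the log resolution using Corollary~\ref{cor: locally free} together with the transversality of $\tilde H$ to $\lfloor\tilde D\rfloor$. The only difference is that you phrase the key pointwise statement dually (some logarithmic vector field transverse to $H$ at each point) whereas the paper phrases it as the nonvanishing of a coefficient $\alpha_j$ of $dh=\sum_j\alpha_j\sigma_j$; since $\alpha_j=\xi_j(h)$ for the dual frame, these are the same condition.
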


\begin{rmk}\label{rmk: hyperplane section}
By \cite[Lemma~2.23]{GKKP11} the divisor $H$ is normal and irreducible, and 
the intersections $D_j\cap H$ are all distinct.
Therefore, $(H,D|_H)$ with $D|_H=a_1(D_1\cap H)+\ldots +a_k(D_k\cap H)$
is a pair, and $(H, D|_H)$ is lc if $(X,D)$ is lc; see \cite[Lemma~2.25]{GKKP11}.
\end{rmk}

\begin{proof}[Proof of Lemma~\ref{lemma: hyperplanes and locally free}]
Since the question is local, we may assume that 
$\Omega_X^{[1]}(\log \lfloor D\rfloor)$ is free and~$H$ is given by the reduced equation $h=0$ for a regular function $h$ on $X$.

Let $\pi:\tilde X\rightarrow X$ be a log resolution of $(X,D)$ and let $\tilde H=\pi^{-1}(H)$.
By \cite[Lemma~2.24]{GKKP11} the restricted morphism
$\pi|_{\tilde H}:\tilde H\rightarrow H$ is a log resolution of
the pair $(H, D|_H)$, 
and the exceptional sets $\mathrm{Exc}(\pi)$ of $\pi$ and 
$\mathrm{Exc}(\pi|_{\tilde H})$ of $\pi|_{\tilde H}$
satisfy $\mathrm{Exc}(\pi|_{\tilde H})=\mathrm{Exc}(\pi)\cap \tilde H$.

Let $\sigma_1,\ldots,\sigma_n$ be a basis of logarithmic $1$-forms on $(X,D)$, and let $\tilde \sigma_1,\ldots,\tilde\sigma_n$
denote their lifts to $\tilde X$.
The hyperplane $\tilde H\subset \tilde X$ is given by the reduced equation $\tilde h=h\circ \pi=0$.
Let $\alpha_1,\ldots,\alpha_n$ be regular functions on $X$
such that 
$$dh=\sum_{j=1}^n\alpha_j\sigma_j\ \ \ \text{ and }\ \ \ 
d\tilde h=\sum_{j=1}^n (\alpha_j\circ\pi)\tilde\sigma_j.$$
After possibly shrinking $X$, the $1$-form $d\tilde h$ has no zeroes and there is $j$, say $j=1$, 
with $\alpha_j(\pi(y))=\alpha_1(\pi(y))\neq 0$ for all $y\in \tilde X$.
Consequently, we may assume $d\tilde h=\tilde \sigma_1$ and $dh=\sigma_1$ without loss of generality.

Let $H^\circ$ be largest open subset of $H$ such that
$(H^\circ, D|_{H^\circ})$ is snc.
Then $(X,D)$ is snc along~$H^\circ$
and the restrictions of $\sigma_2,\ldots,\sigma_n$ to
logarithmic forms $\sigma_2|_H,\ldots,\sigma_n|_H$ in 
$\Omega_H^{[1]}(\log \lfloor D\rfloor|_H)$ are well-defined.
On $H^0$ we have an exact sequence 
$$0\rightarrow \mathcal O_{H^\circ} \langle h\rangle \rightarrow 
\Omega_X^{[1]}(\log \lfloor D\rfloor)|_{H^\circ}\rightarrow \Omega_H^{[1]}(\log \lfloor D\rfloor|_{H})|_{H^\circ}\rightarrow 0,$$
where the kernel of the morphism 
$\Omega_X^{[1]}(\log \lfloor D\rfloor)|_{H^\circ}\rightarrow \Omega_H^{[1]}(\log \lfloor D\rfloor|_{H})|_{H^\circ}$ is generated by 
$dh=\sigma_1$.
Therefore, $\sigma_2|_{H^\circ},\ldots,\sigma_n|_{H^\circ}$ are a basis for $\Omega_H^{[1]}(\log \lfloor D\rfloor|_{H})|_{H^\circ}$
and thus $\sigma_2|_H,\ldots,\sigma_n|_H$ form a basis of 
$\Omega_H^{[1]}(\log \lfloor D\rfloor|_{H})$ since 
$H\setminus H^\circ$ has at least codimension~2.
Hence $\Omega_H^{[1]}(\log \lfloor D\rfloor|_{H})$ is locally free.
\end{proof}

\begin{thm}\label{thm: locally free lc pair}
Let $(X,D)$ be an lc pair such that $\Omega_X^{[1]}(\log\lfloor D\rfloor)$ is locally free.
Then $(X,\lfloor D\rfloor)$ is toroidal.
\end{thm}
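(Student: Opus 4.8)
The plan is to argue by induction on $n=\dim X$, reducing the general case either to the isolated case treated in Proposition~\ref{prop: isolated lc case} or, via a general hyperplane section, to the pair $(H,\lfloor D|_H\rfloor)$ in dimension $n-1$; once we are in the situation where $\Omega_X^{[1]}(\log\lfloor D\rfloor)$ is locally generated by closed forms, Theorem~\ref{thm: closed one forms} concludes. Since being toroidal is a local property, fix $p\in X$. For $n\le 1$ the variety $X$ is smooth and $\lfloor D\rfloor$ is a reduced set of points, so near $p$ the pair is isomorphic to $(\mathbb{C},\emptyset)$ or to $(\mathbb{A}^1,\{0\})$, hence toroidal; this is the base case. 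For the inductive step I would split into two cases, according to whether or not every logarithmic vector field defined near $p$ vanishes at $p$.

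If every logarithmic vector field vanishes at $p$, then Proposition~\ref{prop: isolated lc case} applies verbatim and, together with Theorem~\ref{thm: closed one forms}, shows that $(X,\lfloor D\rfloor)$ is toroidal in a neighbourhood of $p$.

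Otherwise, choose a logarithmic vector field $\xi$ with $\xi(p)\ne 0$. I would then pass to a general hyperplane section $H\ni p$ (a general member of $|mL\otimes\mathfrak m_p|$ for $m\gg0$ and $L$ ample on $X$). By Remark~\ref{rmk: hyperplane section} the pair $(H,D|_H)$ is again lc, with $\lfloor D|_H\rfloor=\lfloor D\rfloor\cap H$, and $H$ is normal, irreducible and quasi-projective; by Lemma~\ref{lemma: hyperplanes and locally free} the sheaf $\Omega_H^{[1]}(\log\lfloor D|_H\rfloor)$ is locally free. The induction hypothesis therefore gives that $(H,\lfloor D|_H\rfloor)$ is toroidal near $p$. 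Since $H$ is general we also have $\xi(p)\notin T_pH$, so the flow map $(t,x)\mapsto\varphi(t,x)$ of $\xi$ (which exists by Theorem~\ref{thm: existence of flows}), restricted to a small neighbourhood of $(0,p)$ in $\mathbb{C}\times H$, is biholomorphic onto a neighbourhood of $p$ in $X$: an inverse is produced by the holomorphic implicit function theorem in the single variable $t$, using that $t\mapsto h(\varphi(-t,y))$ has non-vanishing derivative at $t=0$ for $y$ near $p$, where $h=0$ is a local equation of $H$. As $\xi$ is logarithmic, its flow stabilises $\lfloor D\rfloor$, so under this isomorphism $\lfloor D\rfloor$ corresponds to $\mathbb{C}\times(\lfloor D\rfloor\cap H)$. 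Hence, near $p$, the pair $(X,\lfloor D\rfloor)$ is isomorphic to the product of the toroidal pairs $(\mathbb{C},\emptyset)$ and $(H,\lfloor D|_H\rfloor)$. A product of toroidal pairs is again toroidal, because a product of toric varieties is toric and the boundary divisors combine correctly, and $(\mathbb{C},\emptyset)$ is toroidal since a small disc embeds as an open subset of the torus $\mathbb{C}^*$; thus $(X,\lfloor D\rfloor)$ is toroidal near $p$, completing the induction.

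The step I expect to be the main obstacle is the choice of the hyperplane $H$ in the second case: one must arrange simultaneously that $p\in H$, that $H$ is transverse to $\xi$ at $p$ (needed for the flow-box argument), and that $H$ still enjoys all the genericity properties invoked through Lemma~\ref{lemma: hyperplanes and locally free} and Remark~\ref{rmk: hyperplane section}, namely normality and irreducibility of $H$, distinctness of the $D_i\cap H$, the lc property of $(H,D|_H)$, and the existence of a log resolution of $(X,D)$ restricting to one of $(H,D|_H)$. For $m\gg0$ the linear system $|mL\otimes\mathfrak m_p|$ is basepoint-free away from $p$ and surjects onto $\mathfrak m_p/\mathfrak m_p^2$, so its general member has multiplicity one at $p$, is transverse to $\xi(p)$ (using $\xi(p)\ne 0$), and meets the rest of $X$ generically; one then has to check that the Bertini-type statements of \cite[Section~2.E]{GKKP11} underlying Lemma~\ref{lemma: hyperplanes and locally free} and Remark~\ref{rmk: hyperplane section} persist for general members of this subsystem. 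The remaining verifications — the flow-box biholomorphism on the possibly singular $X$, and that products of toroidal pairs are toroidal — are routine.
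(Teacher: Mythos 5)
Your toolkit is the same as the paper's --- Proposition~\ref{prop: isolated lc case} for the isolated case, a hyperplane cut plus a flow box otherwise, feeding into Theorem~\ref{thm: closed one forms} --- but the way you set up the induction creates a genuine gap at exactly the step you flag, and it is not one that can be checked away. Because you induct on $\dim X$ and work at a fixed point $p$, you must force the hyperplane $H$ to contain $p$, replacing the basepoint-free system of Lemma~\ref{lemma: hyperplanes and locally free} by $|mL\otimes\mathfrak m_p|$. Everything you then import --- normality and irreducibility of $H$, reducedness and distinctness of the $D_i\cap H$, lc-ness of $(H,D|_H)$, the fact that a log resolution of $(X,D)$ restricts to one of $(H,D|_H)$, and hence local freeness of $\Omega_H^{[1]}(\log\lfloor D|_H\rfloor)$ --- is proved in \cite[Section~2.E]{GKKP11} only for general members of a basepoint-free system, and these statements do not survive the imposition of a base point at a special point of the pair. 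Already for $X=\mathbb A^2$, $D=\{xy=0\}$, every line through the origin meets $D$ in a non-reduced length-two scheme, so $D|_H$ is not a boundary in the sense of Definition~\ref{defi: pair}; in higher dimensions $\pi^{-1}(p)$ may be positive-dimensional, so $\pi^{-1}(H)$ is no longer the strict transform and $\pi|_{\tilde H}$ need not be a log resolution, and normality of $H$ at $p$ is not given by Seidenberg once there is a base point. Since the points where toroidality is actually in doubt are precisely the most singular points of $(X,D)$, your "general member through $p$" is general exactly nowhere it needs to be.

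The paper sidesteps this by inducting on the dimension of the non-toroidal locus $Z$ rather than on $\dim X$. One cuts with a genuinely general $H$ (no base condition), so Lemma~\ref{lemma: hyperplanes and locally free} and Remark~\ref{rmk: hyperplane section} apply verbatim and $(H,\lfloor D|_H\rfloor)$ is toroidal because its non-toroidal locus lies in $Z\cap H$, of dimension $\dim Z-1$. The flow-box argument --- which you have essentially correct --- is then run at a \emph{general} point $p$ of $Z\cap H$, a point one does not get to choose in advance; a logarithmic vector field transverse to $H$ at such a $p$ exists because the flows of a basis of $\mathcal T_X(-\log\lfloor D\rfloor)$ act transitively on $Z$ near a general point, while $T_pZ\not\subseteq T_pH$ for general $H$. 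The conclusion that $X$ is toroidal near $p$ contradicts $p\in Z$ and forces $Z=\emptyset$. If you want to rescue your version, you would have to prove an analogue of Lemma~\ref{lemma: hyperplanes and locally free} for $|mL\otimes\mathfrak m_p|$, which is false as stated; reorganising the induction around $\dim Z$ is the correct fix.
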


\begin{proof}
Let $Z\subset X$ be the smallest closed analytic subset such 
that the pair $(X\setminus Z, \lfloor D\rfloor|_{X\setminus Z})$ is toroidal. 
We shrink $X$ such that $\mathcal T_X(-\log \lfloor D\rfloor )$ and
$\Omega_X^{[1]}(\log \lfloor D\rfloor )$ are free and $Z$ is connected.
We now want to do induction on the dimension of~$Z$.

If $Z$ is $0$-dimensional, then $Z$ consists of a single point $Z=\{p\}$, and the statement of the theorem is the content of Proposition~\ref{prop: isolated lc case}.
Assume now that $m=\dim Z$ and that the theorem is proven for those pairs $(X',D')$ such that the non-toroidal locus $Z'$ of the pair $(X',\lfloor D'\rfloor)$ has
$\dim Z'<m$.
Let $H$ be a general hyperplane section of an ample basepoint free linear system on $X$ as described in Lemma~\ref{lemma: hyperplanes and locally free}.
Then $(H,D|_H)$ is lc, $\Omega_H^{[1]}(\log \lfloor D|_H\rfloor)$ is locally free
and 
$\dim(Z\cap H)=
\dim Z-1=m-1<m$. Hence $(H,\lfloor D|_H\rfloor)$ is toroidal by the induction hypothesis.

Let $\pi:\tilde X\rightarrow X$ be the functorial log resolution of $(X,D)$ as in Proposition~\ref{prop: log resolution},
$\overline{D}$ the strict transform of $D$, $E$ the exceptional divisor, $\tilde D=E+\overline{D}$.
Let $\xi_1,\ldots,\xi_n$ be a basis of logarithmic vector fields and let $\tilde\xi_1,\ldots,\tilde \xi_n$ be their lifts to $\tilde X$.
Since $\tilde\xi_1,\ldots,\tilde \xi_n$ are a basis for
$\mathcal T_{\tilde X}(-\log \lfloor \tilde D\rfloor)$ and their
flows all stabilise $\pi^{-1}(Z)$, the flows of $\tilde\xi_1,\ldots,\tilde \xi_n$ act transitively on a neighbourhood $V\subset \pi^{-1}(Z)$ of $\pi^{-1}(p)\in\pi^{-1}(Z)$ 
for a general point $p\in Z$.
We get that 
the flows of $\xi_1,\ldots,\xi_n$ act transitively on the neighbourhood $\pi(V)$ of $p$ in $Z$.

Let $p\in Z$ such that $p\in H\cap Z$
and let $\xi$ be a vector fields on a neighbourhood of $p$ which is not 
tangent to $H$ at $p$.
We may assume that $X\subset \mathbb A^n$ and $H$ is the intersection of a smooth divisor $\hat{H}$ and $X$.
The vector field $\xi$ extends to a holomorphic vector field $\hat{\xi}$ on  an open neighbourhood of $p\in X\subset \mathbb A^n$ in $\mathbb A^n$.
Let $\hat\varphi:\Omega\rightarrow \mathbb A^n$, $\Omega\subseteq \mathbb C\times\mathbb A^n$, denote the flow map of $\hat{\xi}$.
Since $\xi$ is not tangent to $H$ at $p$, $\hat{\xi}$ is not tangent to $\hat{H}$ at $p$ and the flow $\hat\varphi$
induces a morphism $\chi:U\times \hat{H}\rightarrow 
\mathbb A^n$, $(t,q)\mapsto \hat\varphi(t,q)$, where $U$ is an open subset of $\mathbb C$ with $0\in U$,  such that $\chi$ is biholomorphic near $p$. 
Moreover, we have $\chi(U\times H)\subseteq X$ by construction, and thus we get that $U\times H$ and $X$ are biholomorphic near~$p$. In particular, it follows that~$X$ is toroidal in a neighbourhood of $p$,
which is a contradiction to our assumption that $p$ is contained in the non-toroidal subset $Z$ of $X$.
\end{proof}

A version of Theorem~\ref{thm: locally free lc pair} for Du Bois pairs can now directly be deduced by applying the results of 
\cite{GrafKovacsDuBois}.
For definitions and a detailed discussion of Du Bois pairs the reader is referred to \cite[Chapter 6]{KollarSingularitiesMMP}.

\begin{cor}\label{cor: DuBois case}
Let $X$ be a normal quasi-projective variety and $\Sigma\subsetneq X$ a reduced closed subscheme
such that $(X,\Sigma)$ is a Du Bois pair.
Let $\Sigma_{\mathrm{div}}$ denote the largest reduced divisor whose support is contained in $\Sigma$.
Assume that 
$\Omega_X^{[1]}(\log \Sigma_{\mathrm{div}})$ is locally free.
Then $(X, \Sigma_{\mathrm{div}})$ is toroidal.
\end{cor}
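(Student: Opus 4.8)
The plan is to reduce the Du Bois case to the log canonical case already treated in Theorem~\ref{thm: locally free lc pair}. The key external input is the characterisation of Du Bois pairs from \cite{GrafKovacsDuBois}: by \cite[Theorem 1.4.2]{GrafKovacsDuBois}, if $(X,\Sigma)$ is a Du Bois pair with $X$ normal and $\Sigma_{\mathrm{div}}$ the divisorial part of $\Sigma$, then the pair $(X,\Sigma_{\mathrm{div}})$ is log canonical (possibly after observing that the non-divisorial part of $\Sigma$ imposes no further obstruction, since log canonicity is tested in codimension one along the divisor and the relevant discrepancy computation only sees $\Sigma_{\mathrm{div}}$). Thus the first step is simply to invoke this result to conclude that $(X,\Sigma_{\mathrm{div}})$ is an lc pair in the sense of Definition~\ref{defi: pair}.

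Once this is established, the hypothesis that $\Omega_X^{[1]}(\log \Sigma_{\mathrm{div}})$ is locally free puts us exactly in the setting of Theorem~\ref{thm: locally free lc pair}, applied to the pair $(X,D)$ with $D=\Sigma_{\mathrm{div}}$ (so that $\lfloor D\rfloor = \Sigma_{\mathrm{div}}$, since $\Sigma_{\mathrm{div}}$ is reduced). That theorem then yields directly that $(X,\lfloor D\rfloor)=(X,\Sigma_{\mathrm{div}})$ is toroidal, which is the assertion of the corollary. So the second step is a one-line application of Theorem~\ref{thm: locally free lc pair}.

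The only real content, and the step I would treat most carefully, is the passage from "Du Bois pair" to "lc pair": one must make sure the cited theorem applies verbatim to the divisorial boundary $\Sigma_{\mathrm{div}}$ rather than to the full subscheme $\Sigma$, and that the quasi-projectivity and normality hypotheses in our Definition~\ref{defi: pair} are met (normality of $X$ is assumed; quasi-projectivity is assumed). I would also note that the conclusion "toroidal" in particular recovers that $X$ is smooth away from $\Sigma_{\mathrm{div}}$, consistent with the fact that Du Bois singularities that are not contained in the boundary are excluded here by local freeness of the logarithmic cotangent sheaf. With these points checked, the proof is complete.
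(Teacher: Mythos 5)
Your overall strategy---reduce to Theorem~\ref{thm: locally free lc pair} by using \cite[Theorem~1.4.2]{GrafKovacsDuBois} to pass from ``Du Bois'' to ``lc''---is the same as the paper's, but there is a genuine gap in the first step. The cited theorem does \emph{not} say that a Du Bois pair is automatically lc; it requires the hypothesis that $K_X+\Sigma_{\mathrm{div}}$ is Cartier. Indeed, without at least a $\mathbb Q$-Cartier assumption on $K_X+\Sigma_{\mathrm{div}}$ the notion ``lc'' in the sense of Definition~\ref{defi: pair} is not even defined (discrepancies cannot be computed), and Du Bois singularities in general need not have ($\mathbb Q$-)Cartier log canonical divisor, nor are they lc even when they do. Your worry about whether the non-divisorial part of $\Sigma$ interferes is a side issue; the real missing ingredient is the verification of the Cartier hypothesis.

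That verification is exactly where the hypothesis of the corollary enters a second time: after shrinking so that $\Omega_X^{[1]}(\log \Sigma_{\mathrm{div}})$ is free, its top reflexive exterior power $\Omega_X^{[n]}(\log \Sigma_{\mathrm{div}})\cong\omega_X(\Sigma_{\mathrm{div}})$, $n=\dim X$, is also free (the two sheaves are reflexive and agree on the snc locus, whose complement has codimension at least $2$). Hence $K_X+\Sigma_{\mathrm{div}}\sim 0$ locally, so $K_X+\Sigma_{\mathrm{div}}$ is Cartier. Only now does \cite[Theorem~1.4.2]{GrafKovacsDuBois} apply to give that $(X,\Sigma_{\mathrm{div}})$ is lc, after which your second step (applying Theorem~\ref{thm: locally free lc pair}) goes through as you describe.
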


\begin{proof}
It is again enough to consider the case where $\mathcal T_X(-\log \Sigma_{\mathrm{div}})$ and
$\Omega_X^{[1]}(\log  \Sigma_{\mathrm{div}})$ are free.
Then the twisted canonical sheaf $\omega_X( \Sigma_{\mathrm{div}})\cong\Omega_X^{[n]}(\log  \Sigma_{\mathrm{div}})$, $n=\dim X$,
is also free, which implies that the divisor 
$K_X+ \Sigma_{\mathrm{div}}$ is linearly equivalent to $0$, 
where $K_X$ denote a canonical divisor of $X$.
In particular, $K_X + \Sigma_{\mathrm{div}}$ is Cartier.
Therefore, the pair $(X, \Sigma_{\mathrm{div}})$ is lc by
\cite[Theorem 1.4.2]{GrafKovacsDuBois}
and $(X, \Sigma_{\mathrm{div}})$ is toroidal by Theorem~\ref{thm: locally free lc pair}.
\end{proof}

\begin{rmk}
Alternatively, the statement of Corollary~\ref{cor: DuBois case} could be proven along the lines as the statement for lc pairs noting that extension of logarithmic forms to log resolutions and 
the cutting down procedure via hyperplanes also work for Du Bois pairs by 
\cite[Theorem~4.1 and Lemma 4.4]{GrafKovacsDuBois}
\end{rmk}

\bibliographystyle{amsalpha}
\providecommand{\bysame}{\leavevmode\hbox to3em{\hrulefill}\thinspace}
\providecommand{\MR}{\relax\ifhmode\unskip\space\fi MR }
\providecommand{\MRhref}[2]{%
  \href{http://www.ams.org/mathscinet-getitem?mr=#1}{#2}
}
\providecommand{\href}[2]{#2}

\end{document}